\documentclass{amsart}

\usepackage[utf8]{inputenc}
\usepackage[T2A,T1]{fontenc}

\usepackage{lmodern}
\usepackage{amssymb}
\usepackage{amsmath}
\usepackage{mathrsfs}
\usepackage{stmaryrd}
\usepackage{enumitem}
\usepackage[olditem,oldenum]{paralist}

\usepackage{graphics}

\usepackage[all,2cell]{xy} 
\UseAllTwocells

\usepackage{dsfont}



\DeclareSymbolFont{EulerScripta}{U}{euf}{m}{n}
\SetSymbolFont{EulerScripta}{bold}{U}{euf}{b}{n} 
\DeclareSymbolFontAlphabet\matheufm{EulerScripta}

\DeclareSymbolFont{EulerScriptb}{U}{eur}{m}{n}
\SetSymbolFont{EulerScriptb}{bold}{U}{eur}{b}{n} 
\DeclareSymbolFontAlphabet\matheurm{EulerScriptb}

\DeclareSymbolFont{EulerScriptc}{U}{eus}{m}{n}
\SetSymbolFont{EulerScriptc}{bold}{U}{eus}{b}{n} 
\DeclareSymbolFontAlphabet\matheusm{EulerScriptc}
\newcommand\eusm{\matheusm}

\usepackage{xspace}

\usepackage{comment}

\newcommand{\theoremref}[1]{\hyperref[#1]{Theorem~\ref*{#1}}}
\newcommand{\lemmaref}[1]{\hyperref[#1]{Lemma~\ref*{#1}}}
\newcommand{\remarkref}[1]{\hyperref[#1]{Remark~\ref*{#1}}}
\newcommand{\definitionref}[1]{\hyperref[#1]{Definition~\ref*{#1}}}
\newcommand{\propositionref}[1]{\hyperref[#1]{Proposition~\ref*{#1}}}
\newcommand{\conjectureref}[1]{\hyperref[#1]{Conjecture~\ref*{#1}}}
\newcommand{\corollaryref}[1]{\hyperref[#1]{Corollary~\ref*{#1}}}
\newcommand{\exampleref}[1]{\hyperref[#1]{Example~\ref*{#1}}}
\newcommand{\exerciseref}[1]{\hyperref[#1]{Exercise~\ref*{#1}}}
\renewcommand{\eqref}[1]{\hyperref[#1]{(\ref*{#1})}}
\newcommand{\pararef}[1]{\hyperref[#1]{\S\ref*{#1}}}
\newcommand{\enumref}[1]{\hyperref[#1]{{\itshape{(\ref*{#1})}}}}
\newcommand{\propositionitemref}[2]{\hyperref[#1]{Proposition~\ref*{#1}-{\itshape{(\ref*{#2})}}}}

\theoremstyle{plain}
\newtheorem{theo}{Theorem}

\newtheorem{prop}[theo]{Proposition}
\newtheorem{lemm}[theo]{Lemma}
\newtheorem{coro}[theo]{Corollary}

\newtheorem*{theo*}{Theorem}

\theoremstyle{definition}
\newtheorem{defi}[theo]{Definition}

\theoremstyle{remark}
\newtheorem{rema}[theo]{Remark}

\newtheorem*{conv}{Conventions}

\renewcommand{\ge}{\geqslant}
\renewcommand{\le}{\leqslant}

\newcommand{\oMCrv}{\overline{\mathrm{M}}\Crv}
\newcommand{\uMCrv}{\underline{\mathrm{M}}\Crv}
\newcommand{\dR}{\mathrm{dR}}
\newcommand{\red}{\mathrm{red}}
\newcommand{\bbG}{\mathbf{G}}
\newcommand{\Sing}{\mathrm{Sing}}
\newcommand{\xra}{\xrightarrow}
\newcommand{\op}{\mathrm{op}}
\newcommand{\scr}{\mathscr}
\newcommand{\ra}{\rightarrow}
\newcommand{\Id}{\mathrm{Id}}
\newcommand{\one}{\mathds{1}}
\newcommand{\Hom}{\mathrm{Hom}}
\newcommand{\Isheaf}{\matheusm{I}}
\newcommand{\Jsheaf}{\matheusm{J}}
\newcommand{\Osheaf}{\mathscr{O}}
\newcommand{\bbQ}{\mathbb{Q}}

\newcommand{\bbZ}{\mathbb{Z}}
\newcommand{\bbC}{\mathbb{C}}
\newcommand{\bbK}{K}
\newcommand{\bbH}{\mathbb{H}}
\newcommand{\Proj}{\mathop{\mathrm{Proj}}}
\newcommand{\Spec}{\mathop{\mathrm{Spec}}}

\newcommand{\Gr}{\mathrm{Gr}}
\newcommand{\Ker}{\mathop{\mathrm{Ker}}\nolimits}
\newcommand{\Coker}{\mathop{\mathrm{Coker}}\nolimits}

\newcommand{\colim}{\mathop{\mathrm{colim}}}
\newcommand{\an}{\mathrm{an}}
\renewcommand{\mod}{\mathop{\mathsf{mod}}\nolimits}
\newcommand{\comod}{\mathop{\mathsf{comod}}\nolimits}

\newcommand{\sM}{\mathscr{M}}

\newcommand{\sA}{\mathscr A}
\newcommand{\sB}{\mathscr B}
\newcommand{\sP}{\mathscr P}

\newcommand{\sS}{\mathscr S}
\newcommand{\End}{\mathrm{End}}
\newcommand{\name}[1]{{\scshape{#1}}}
\newcommand{\id}{\mathrm{id}}
\newcommand{\bH}{\mathbf{H}}
\newcommand{\vc}{\star}
\newcommand{\Aff}{\mathrm{Aff}}

\newcommand{\gr}{\mathrm{Gr}}

\newcommand{\EHM}{\mathrm{EHM}}

\newcommand{\ECM}{\mathrm{ECM}}
\newcommand{\ECMM}{\mathrm{ECMM}}

\newcommand{\et}{\mathrm{\'et}}
\newcommand{\drR}{\mathsf{R}}

\newcommand{\sT}{\mathrm{T}}

\newcommand{\Crv}{\mathrm{Crv}}
\newcommand{\MCrv}{\mathrm{MCrv}}
\newcommand{\Betti}{\mathrm{B}}

\newcommand{\ab}{\mathrm{ab}}
\newcommand{\uni}{\mathrm{uni}}
\newcommand{\mul}{\mathrm{mul}}
\newcommand{\Ext}{\mathrm{Ext}}
\newcommand{\Pic}{\mathrm{Pic}}
\newcommand{\Div}{\mathrm{Div}}
\newcommand{\Lie}{\mathop{\mathrm{Lie}}\nolimits}
\newcommand{\Sw}{\mathrm{Sw}}
\newcommand{\cl}{\mathrm{cl}}
\newcommand{\Ksheaf}{\mathscr{K}}
\newcommand{\LM}{\mathsf{LM}}
\newcommand{\oLM}{\overline{\mathsf{L}}\mathsf{M}}
\newcommand{\uLM}{\underline{\mathsf{L}}\mathsf{M}}

\newcommand{\Del}{\mathrm{Del}}
\newcommand{\fil}{\mathrm{fil}}
\newcommand{\sa}{\mathrm{sa}}

\newcommand{\oMPo}{\overline{\mathrm{M}}\mathrm{Po}}
\newcommand{\uMPo}{\underline{\mathrm{M}}\mathrm{Po}}
\newcommand{\bbP}{\mathbb{P}}
\newcommand{\UNM}{\mathrm{UNM}}
\newcommand{\INM}{\mathrm{INM}}
\newcommand{\bLM}{\mathbf{LM}}

\newcommand{\tunMot}{{}^t\kern-3pt\mathscr M}

\makeatletter
\let\@@seccntformat\@seccntformat
\renewcommand*{\@seccntformat}[1]{%
  \expandafter\ifx\csname @seccntformat@#1\endcsname\relax
    \expandafter\@@seccntformat
  \else
    \expandafter
      \csname @seccntformat@#1\expandafter\endcsname
  \fi
    {#1}%
}
\def\subsection{\@startsection{subsection}{2}%
  \z@{.5\linespacing\@plus.7\linespacing}{0em}%
  {\normalfont}}
\makeatother

\usepackage[unicode,bookmarks]{hyperref}
\usepackage[usenames,dvipsnames]{xcolor}
\hypersetup{colorlinks=true,citecolor=NavyBlue,linkcolor=BrickRed,urlcolor=Green}

\begin{document}

\title{Nori motives of curves with modulus and Laumon 1-motives}
\author{Florian Ivorra}
\address{Institut de recherche math\'ematique de Rennes\\ UMR 6625 du CNRS\\ Universit\'e de Rennes 1\\
Campus de Beaulieu\\
35042 Rennes cedex (France)}
\email{florian.ivorra@univ-rennes1.fr}

\author{Takao Yamazaki}
\address{Institute of mathematics \\ Tohoku University\\ Aoba, Senda\"i\\
980-8578 (Japan)}
\email{ytakao@math.tohoku.ac.jp}
\subjclass[2010]{Primary 19E15, 16G20; Secondary 14F42}

\keywords{Motives, curves with modulus, quiver representation}


\begin{abstract}
Let $k$ be a number field. We describe the category of Laumon 1-isomotives over $k$ as the universal category in the sense of Nori associated with a quiver representation built out of smooth proper $k$-curves with two disjoint effective divisors and a notion of $H^1_\dR$ for such  "curves with modulus".
This result extends and relies on the theorem of \name{J. Ayoub} and \name{L. Barbieri-Viale} that describes Deligne's category of 1-isomotives in terms of Nori's Abelian category of motives.
\end{abstract}
\thanks{The second author is supported by JSPS KAKENHI Grant (15K04773).}
\maketitle


\section{Introduction}
Let $k$ be a field of characteristic zero with an embedding $k\hookrightarrow\bbC$ into the field of complex numbers.
\subsection{}
Let $R$ be a field or a Dedekind ring and $T:\scr D\ra\mod(R)$ be a representation of a quiver $\scr D$ with values in the category $\mod(R)$ of finitely generated projective $R$-modules. In the unpublished work \cite{Nori} (see also \cite{LevineKHB,BookNori} for surveys), \name{M. Nori} has constructed an $R$-coalgebra $\eusm{C}_T$ such that the representation $T$ has a universal factorization (see \theoremref{NoriLength})
$$\scr D\xra{\overline{T}}\comod(\eusm{C}_T)\xra{F_T}\mod(R) $$
where $\comod(\eusm{C}_T) $ is the category of left $\eusm{C}_T$-comodules that are finitely generated over $R$, $\overline{T}$ is a representation and $F_T$ is the forgetful functor. 

Then, \name{M. Nori} applies this formalism to Betti homology to obtain the Abelian category $\EHM $ of effective homological motives over $k$ (see e.g. \cite{Nori,LevineKHB,BookNori}).  By construction, given a $k$-variety $X$, a closed (reduced) subscheme $Y\subseteq X$ and an integer $i\in\bbZ$, there is a motive $\overline{H}_i(X,Y)$ in $\EHM$ which realizes to the usual Betti homology.


\subsection{}\label{paraABV} 
\name{J. Ayoub} and \name{L. Barbieri-Viale}  have shown in \cite[Theorem 5.2, Theorem 6.1]{MR3302623} that the thick Abelian subcategory of Nori's category of effective homological motives generated by the $\overline{H}_0$ and $\overline{H}_1$ of pairs is equivalent to: \begin{inparaenum} \item[(a)] the Abelian category $\EHM_1$ associated with the representation
\begin{align*}
\Crv_k^\op & \ra \mod(\bbZ)\\
(C,Y) & \mapsto  H_1(C,Y)
\end{align*}
where $\Crv_k$ is the category of pairs $(C, Y)$ where $C$ is a smooth affine $k$-curve, $Y\subseteq C$ is a closed subset consisting of finitely many closed points and   $H_1(C,Y)$ is the first Betti homology group of the pair $(C,Y)$ ;
 \item[(b)] the Abelian category $\tunMot_1$ of Deligne's 1-motives with torsion \cite{MR2018930,MR0498552}\end{inparaenum}. \par
Note that by \cite[Th\'eor\`eme 3.4.1]{MR2102056} the derived category of Deligne's Abelian category of 1-isomotives $\sM_{1,\bbQ}$ is known to be equivalent to the thick triangulated subcategory of Voevodsky's category of geometrical effective motives with rational coefficients generated by motives of smooth $k$-curves.

\subsection{} Such a description is not possible integrally for the extension of the theory of 1-motives introduced by \name{G. Laumon} in \cite{Laumon} and studied in \cite{BVB,MR2385300,MR2563147,MR3095229}. Indeed the category of Laumon 1-motives with torsion $\tunMot_1^a$ of \cite{BVB} contains the category of infinitesimal formal $k$-groups as a full subcategory\footnote{The category of infinitesimal formal $k$-groups  is equivalent via the Lie Algebra with the category of finite dimensional $k$-vector spaces.}. In particular not all Hom groups in $\tunMot_1^a$ are finitely generated Abelian groups and therefore there cannot exist a quiver $\scr D$ and a representation $T:\scr D\ra\mod(\bbZ)$ such that $\tunMot_1^a$ is equivalent to $\comod(\eusm C_T)$.

If the field $k$ is not a number field, the same obstruction applies with rational coefficients. The Abelian category $\sM_{1,\bbQ}^a$ of Laumon $1$-isomotives still contains the category of infinitesimal formal $k$-groups as a full subcategory and therefore not all its Hom groups are finite dimensional $\bbQ$-vector spaces. Again this prevents the existence of a quiver $\scr D$ and a representation $T:\scr D\ra\mod(\bbQ)$ such that $\sM_{1,\bbQ}^a$ is equivalent to $\comod(\eusm C_T)$.

\subsection{} If $k$ is a number field, one may still hope for describing the Abelian category $\sM_{1,\bbQ}^a$ of Laumon $1$-isomotives over $k$ via Nori's tannakian formalism. The main result of this work is such a description in that case.



More precisely, let a $k$-curve with modulus be a triplet $(X,Y,Z)$ where $X$ is a smooth proper $k$-curve and $Y,Z$ are effective divisors on $X$ with disjoint supports. Define the de Rham cohomology of a such a $k$-curve with modulus as the finite dimensional $k$-vector space
$$\bH^1_\dR(X,Y,Z):=\bH^1(X,[\Isheaf_Y\ra \Isheaf_Z^{-1}\Omega^1_X])$$
where $\Isheaf_Y$ and $\Isheaf_Z$ are the ideals in $\Osheaf_X$ that define $Y$ and $Z$.  The $k$-curves with modulus define a category $\oMCrv_k$ 
for which a morphism $(X,Y,Z)\ra (X',Y',Z')$ is a morphism $f:X\ra X'$ of $k$-varieties such that 
(1) $Y\leqslant f^*Y'$, (2) $Z - Z_\red \ge f^*(Z'-Z_\red')$, and (3) $Z_\red \ge (f^* Z')_\red$.
If $k$ is a number field, by forgeting the $k$-linear structure, the de Rham cohomology of curves with modulus define a functor $$\bH^1_\dR :\oMCrv_k^\op\ra\mod(\bbQ)$$
with values in the category of finite dimensional $\bbQ$-vector spaces. Our main theorem is the following (see \theoremref{MainTheo2}).

\begin{theo}\label{MainTheo}
Let $k$ be a number field. The $\bbQ$-linear Abelian category associated with the representation of quiver 
\begin{align*}
\bH^1_\dR :\oMCrv_k^\op &\ra\mod(\bbQ)\\
 (X,Y,Z)& \mapsto \bH^1_\dR(X,Y,Z):=\bH^1(X,[\Isheaf_Y\ra \Isheaf_Z^{-1}\Omega^1_X])
\end{align*}
 is equivalent to the category $\sM^a_{1,\bbQ}$ of Laumon 1-isomotives over $k$.

\end{theo}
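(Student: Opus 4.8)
The plan is to exhibit $\sM^a_{1,\bbQ}$ as the universal $\bbQ$-linear Abelian category attached by Nori's construction to the representation $\bH^1_\dR$. By \theoremref{NoriLength} it suffices to equip $\sM^a_{1,\bbQ}$ with a faithful exact fibre functor to $\mod(\bbQ)$, to factor the quiver representation $\bH^1_\dR$ through $\sM^a_{1,\bbQ}$, and to show that every object of $\sM^a_{1,\bbQ}$ is a subquotient of a finite direct sum of objects in the image of this factorization. Once these are in place, the functor $\comod(\eusm C_{\bH^1_\dR})\to\sM^a_{1,\bbQ}$ supplied by the universal property is automatically faithful and exact, and the generation condition identifies the coalgebra reconstructed from $(\sM^a_{1,\bbQ},T_\dR)$ with $\eusm C_{\bH^1_\dR}$, so that it is an equivalence. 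I would organize the proof around these three verifications.

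First I would construct the factorization. To each $k$-curve with modulus $(X,Y,Z)$ one attaches a Laumon $1$-isomotive $\LM(X,Y,Z)$ whose de Rham realization is $\bH^1_\dR(X,Y,Z)$: its connected algebraic-group part is a generalized Jacobian of $X$ with modulus dictated by $Z$, and its formal part records the lattice and infinitesimal data dictated by $Y$. The three conditions imposed on a morphism $f\colon(X,Y,Z)\to(X',Y',Z')$ are exactly those needed for $f$ to induce a morphism $\LM(X',Y',Z')\to\LM(X,Y,Z)$: condition~(1), $Y\le f^*Y'$, controls the pullback on the $Y$-side and hence on the formal group, while conditions~(2) and~(3) separate the higher-order and reduced parts of $Z$ and hence control the vectorial and toric parts of the algebraic group. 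Composing $\LM$ with the de Rham realization $T_\dR\colon\sM^a_{1,\bbQ}\to\Vec_k$ followed by the forgetful functor $\Vec_k\to\mod(\bbQ)$ recovers $\bH^1_\dR$ on the nose, giving the desired factorization. Here the assumption that $k$ is a number field enters: it guarantees that the finite-dimensional $k$-space $\bH^1_\dR(X,Y,Z)$ stays finite-dimensional over $\bbQ$ and that all $\Hom$-groups remain finite, so that one is genuinely working inside Nori's formalism over $R=\bbQ$.

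Next I would verify that $T_\dR$ is faithful and exact. Exactness is the statement that de Rham realization is a fibre functor on $1$-isomotives. Faithfulness I would read off from the weight filtration of a Laumon $1$-isomotive by checking it on the graded pieces: for the lattice, toric and abelian pieces it is the classical injectivity of de Rham cohomology up to isogeny, for the vector piece it is immediate from the passage to Lie algebras, and for the infinitesimal formal piece it follows from the equivalence, recalled in the footnote, between infinitesimal formal $k$-groups and their Lie algebras. Forgetting the $k$-structure down to $\bbQ$ preserves both properties, the number field hypothesis again ensuring finiteness over $\bbQ$.

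The main obstacle is the generation step: every Laumon $1$-isomotive must be a subquotient of a finite direct sum of objects $\LM(X,Y,Z)$. I would analyze this along the structure $M=[F\to G]$, where $F=F_\et\oplus F^0$ decomposes into a lattice and an infinitesimal formal group and $G$ is an extension of an abelian variety $A$ by a torus $T$ and a vector group $V$. The divisor $Z$ governs $G$ — its reduced part producing $T$ and its higher-order part $V$ — while the divisor $Y$ governs $F$ — its reduced part producing the lattice $F_\et$ and its higher-order part the summand $F^0$. For curves with reduced modulus (so $V=0$ and $F^0=0$) the objects $\LM(X,Y,Z)$ land in the Deligne subcategory $\sM_{1,\bbQ}$, and here I would invoke the theorem of \name{Ayoub} and \name{Barbieri-Viale} \cite{MR3302623} identifying $\sM_{1,\bbQ}$ with the Nori category built from curves without modulus; the subtlety is that \cite{MR3302623} normalizes by the Betti fibre functor whereas the present statement uses de Rham, a change that leaves the underlying Abelian category unchanged but must be carried through the comparison. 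The genuinely new generators — the vectorial part $V$ and the infinitesimal formal part $F^0$ — I would produce by allowing $Z$, respectively $Y$, to be non-reduced: raising the multiplicity along a point introduces de Rham classes coming from higher-order poles (for $Z$) or higher-order jets (for $Y$), and these generate, after passage to subquotients, precisely the vectorial and formal generators distinguishing $\sM^a_{1,\bbQ}$ from $\sM_{1,\bbQ}$. Reassembling the pieces through the extension structure of Laumon $1$-motives then yields the required generation, completing the identification asserted in \theoremref{MainTheo2}.
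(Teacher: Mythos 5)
Your overall architecture (factor $\bH^1_\dR$ through $\sM^a_{1,\bbQ}$ via a functor built from generalized Jacobians and formal groups, check that the sharp de Rham realization is a faithful exact fibre functor, and prove that every Laumon $1$-isomotive is a quotient of some $\oLM(X,Y,Z)$) reproduces two of the three hypotheses of the criterion the paper actually uses (\propositionref{AyoubBV}), and your generation step is essentially \propositionref{QuotProp}. But the pivotal claim that these data already force the comparison functor $\comod(\eusm{C}_{\bH^1_\dR})\to\sM^a_{1,\bbQ}$ to be an equivalence --- that ``the generation condition identifies the coalgebra reconstructed from $(\sM^a_{1,\bbQ},T_\dR)$ with $\eusm{C}_{\bH^1_\dR}$'' --- is false. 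A faithful exact functor compatible with fibre functors, whose image generates the target under subquotients, need not be an equivalence: take the one-vertex quiver with $T(p)=\bbQ^2$, so that $\comod(\eusm{C}_T)\simeq\mod(\bbQ)$, and $\sB=\mod(\bbQ)\times\mod(\bbQ)$ with $S(p)=(\bbQ,\bbQ)$ and $G$ the direct-sum fibre functor; every object of $\sB$ is a quotient of a power of $S(p)$, yet the comparison functor is not an equivalence. What fails there, and what your proposal omits entirely, is condition \enumref{CondC} of \propositionref{AyoubBV}: for every map $S(p)\ra B$ the kernel of $T(p)\ra G(B)$ must be a sub-$\End(T|_{\scr E})$-module for a suitable finite subquiver $\scr E$. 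This is the real content of the theorem, and nothing in your outline addresses it.

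Verifying that condition is where almost all of the paper's work goes: one constructs on $\ECMM_1$ a two-step filtration mirroring $\fil^\bullet_\sM$ on Laumon $1$-motives (\pararef{sect:filtration}) and establishes the equivalence separately on the graded pieces --- the Deligne part (\propositionref{EquivDel2}, via Ayoub--Barbieri-Viale together with the Betti/de Rham comparison you mention), the unipotent part (\propositionref{Equivuni}) and the infinitesimal part (\propositionref{Equivinf}) --- before a diagram chase assembles the pieces. In particular, your reading of the number-field hypothesis as mere finite-dimensionality over $\bbQ$ misses its essential use: in \propositionref{Equivuni} one must compute $\End_\bbQ(T|_{\scr E})=k$ for the quiver of triples $(\bbP^1,n[\infty],\emptyset)$, and this rests on \lemmaref{lem:elementary}, i.e.\ on the fact that $k$ is generated as a $\bbQ$-algebra by $\mu$-th powers --- a genuinely arithmetic input, not a finiteness statement. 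Without an argument of this kind (or some other proof of fullness of the comparison functor), your outline does not close.
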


\theoremref{MainTheo} generalizes the equivalence between (a) and (b) recalled in \pararef{paraABV} and proved by \name{J. Ayoub} and \name{L. Barbieri-Viale} in \cite[Theorem 5.2]{MR3302623}.
Note that we do not provide any definition for a non homotopy invariant analog of the full category of Nori's motives of varieties (of arbitrary dimension) with modulus. %
Moreover in \cite{MR3302623} the main theorems are valid over any field of characteristic zero embedded into the complex numbers and admit also integral coefficient variants. Here we are not able to provide such generality. 
\footnote{In recent papers \cite{BCL, BP},
a new construction of the universal category 
without finite dimensionality assumption is introduced,
which would enable us to define $\ECMM_1$ for arbitrary subfield of $\bbC$.
Unfortunately, we then lose 
a description of the category as $\comod(\eusm C_T)$,
which is essential in the proof of our main result
(see \propositionref{AyoubBV}).
}
We leave this issue as future problems.

\begin{conv}
Throughout the paper we work over a base field $k$
with a fixed embedding $k \hookrightarrow \bbC$.
In \pararef{sect:DefMotivesWithModulus}, \pararef{sect:deRhamMotives} and from \pararef{sect:start-assuming-k-is-Q} onward,
we further assume that $k$ is a number field. 
For a $k$-scheme $X$, we denote by $\Omega_X^1$
the sheaf of K\"ahler differentials on $X$ relative to $k$.
If $Z$ is a closed subscheme of $X$,
we write $\Isheaf_Z \subset \Osheaf_X$ for the ideal sheaf of $Z$.
For a vector space $V$ over $k$,
we write $V^*$ for the $k$-linear dual of $V$.
Let $R$ be a ring and let $R'$ be an $R$-algebra.
For an $R$-linear Abelian category $\sA$,
we denote by $\sA \otimes_R R'$ its scalar extension.
This is a $R'$-linear Abelian category
having the same objects as $\sA$ and such that
\begin{equation}\label{eq:scalar-ext}
 \Hom_{\sA \otimes_R R'}(A, B) = \Hom_{\sA}(A, B)\otimes_R R'.
\end{equation}
\end{conv}

\section{Reminders on Nori's tannakian formalism}


\subsection{}
Let $K$ be a field. Following \cite[Chapitre II, \S4]{Gabriel}, recall that a $K$-linear Abelian category $\sP$ is said to be finite if it is Noetherian and Artinian i.e. $\sP$ is essentially small and any object in $\sP$ has finite length.
We shall say that $\sP$ is Hom finite if for any objects $P,Q$ in $\sP$ the $K$-vector space $\sP(P,Q)$ is finite dimensional. By \cite[Theorem 2.1]{IvorraPNM}, we have the following theorem.

\begin{theo}\label{NoriLength}
Let $\sP$ be a  $K$-linear Abelian category  which is finite and Hom finite, $\scr D$ be a quiver\footnote{Recall that a quiver is simply a directed graph.} and $T:\scr D\ra\sP$ be a representation of the quiver $\scr D$ with values in $\sP$. Then, there exist a $K$-linear Abelian category $\sA$, a representation $R:\scr D\ra\sA$, a  $K$-linear faithful exact functor $F:\sA\ra\sP$ and an invertible 2-morphism  $\alpha:F\circ R\ra T$ such that for every $K$-linear Abelian category $\sB$, every representation $S:\scr D\ra\sB$, every $K$-linear exact faithful functor $G:\sB\ra\sP$, and every invertible 2-morphism $\beta:G\circ S\ra T$ the following conditions are satisfied.
\begin{itemize}
\item There exist a $\bbK$-linear functor $H:\sA\ra\sB$ and two invertible $2$-morphisms
$$\gamma:H\circ R\xrightarrow{\simeq}S;\qquad \delta:G\circ H\xrightarrow{\simeq}F$$ 
such that 
$$\xymatrix{{G\circ H\circ R}\ar[r]^-{G\vc \gamma}\ar[d]^-{\delta\vc R} & {G\circ S}\ar[d]^-{\beta}\\
{F\circ R}\ar[r]^-{\alpha} & {T}} $$
is commutative. 
\item If  $H':\sA\ra\sB$ is a $\bbK$-linear functor and 
$$\gamma':H'\circ R\xrightarrow{\simeq}S;\qquad \delta':G\circ H'\xrightarrow{\simeq}F$$ 
are two invertible $2$-morphisms such that the square
$$\xymatrix{{G\circ H'\circ R}\ar[r]^-{G\vc\gamma'}\ar[d]^-{\delta'\vc R} & {G\circ S}\ar[d]^-{\beta}\\
{F\circ R}\ar[r]^-{\alpha} & {T}} $$
is commutative, then there exists a unique $2$-morphism $\theta:H\ra H'$ such that $\gamma'\circ(\theta\vc R)=\gamma$ and $\delta'\circ(G\vc\theta)=\delta$. 
\end{itemize}
\end{theo}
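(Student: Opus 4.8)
The plan is to realise the statement as an instance of \name{Nori}'s tannakian construction, the only novelty over the classical case being that the target is an arbitrary finite Hom finite $K$-linear Abelian category $\sP$ rather than $\mod(K)$; this is exactly the content of \cite[Theorem 2.1]{IvorraPNM}, and I would organise the argument in three stages.

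First I would reduce to the case of a finite quiver. For a finite subquiver $F\subseteq\scr D$ write $A_F:=\End(T|_F)$ for the $K$-algebra of natural endomorphisms of the restricted representation; since $\sP$ is Hom finite this is a finite dimensional $K$-algebra, and $C_F:=A_F^\vee$ is a finite dimensional $K$-coalgebra. Dualising the tautological action of $A_F$ on each $T(v)$ equips $T(v)$ with the structure of a $C_F$-comodule object internal to $\sP$, i.e.\ a coaction $T(v)\ra T(v)\otimes_K C_F$ in $\sP$; because every edge $e$ of $F$ makes $T(e)$ into an $A_F$-linear, hence $C_F$-colinear, morphism, this defines a representation $R_F$ of $F$ in the category $\sA_F:=\comod_\sP(C_F)$ of finite $C_F$-comodule objects of $\sP$. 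The forgetful functor $F_F:\sA_F\ra\sP$ is $K$-linear, faithful and exact (forgetting the coaction preserves kernels and cokernels, which are computed in $\sP$), and one has $F_F\circ R_F=T|_F$ on the nose. Passing to the filtered colimit over all finite subquivers produces the coalgebra $\eusm C_T=\colim_F C_F$ and the candidate datum $(\sA,R,F,\alpha)$ with $\sA=\comod_\sP(\eusm C_T)$; the universal property for $\scr D$ will then follow from compatible universal properties for each $F$, every object and morphism of $\sA$ already living in some $\sA_F$.

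The heart of the proof is the verification of the universal property for a fixed finite $F$. Given a competing factorisation $(\sB,S,G,\beta)$ with $G$ faithful and exact, the faithfulness of $G$ embeds $\End(S|_F)$ as a subalgebra of $\End(G\circ S|_F)\cong A_F$, whence by dualisation a map of coalgebras $\eusm C_T\ra\eusm C_\sB$ onto the coalgebra reconstructed from $\sB$ through the fibre functor $G$. Corestriction of comodules along this map yields the functor $H$; the isomorphisms $\delta:G\circ H\xra{\simeq}F$ and $\gamma:H\circ R\xra{\simeq}S$ come respectively from the compatibility of $G$ with the two coactions and from $H(R(v))=S(v)$. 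Uniqueness of the comparison $2$-morphism $\theta$ is then forced by the faithfulness of $G$: the relation $\delta'\circ(G\vc\theta)=\delta$ pins down $G\vc\theta$, and a faithful functor reflects such a $2$-morphism uniquely; its existence is checked first on the generators $R(v)$ using $\gamma,\gamma'$ and then propagated to all of $\sA$ by exactness.

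I expect the genuine obstacle to be the relative reconstruction underlying this last stage, namely the fully faithful exact embedding $\sB\hookrightarrow\comod_\sP(\eusm C_\sB)$ and the resulting coalgebra map $\eusm C_T\ra\eusm C_\sB$ when the base is a general finite Hom finite $\sP$ rather than $\mod(K)$. In \name{Nori}'s original setting one exploits the concrete fibre functor to vector spaces; here one must carry the whole argument internally to $\sP$, controlling the interaction of the $C_F$-coactions with the ambient Abelian structure and checking that $H$ indeed lands in $\sB$ and not merely in the larger comodule category. Once this relative tannakian reconstruction is in place the remaining bookkeeping---finite dimensionality of $A_F$, exactness of the forgetful functors, and coherence of the $2$-morphisms under $F\subseteq F'$---is routine, and assembling the pieces over the filtered system of finite subquivers gives the asserted universal datum $(\sA,R,F,\alpha)$.
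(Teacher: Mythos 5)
Your overall architecture (finite subquivers, the finite dimensional algebra $\End(T|_{\scr E})$, its dual coalgebra, a colimit, and a verification of the universal property for a competing factorisation) has the same shape as the argument the paper relies on, but the category you propose as the universal one is not the right object, and the difficulty you flag at the end as a technical obstacle is in fact fatal to that choice. You take $\sA_F=\comod_\sP(C_F)$, the category of $C_F$-comodule objects \emph{internal to} $\sP$, equivalently objects of $\sP$ equipped with an action of $A_F=\End(T|_F)$; this category is too large. Take $\scr D$ to be a single vertex $v$ with no edges and $T(v)=P$ a simple object of $\sP$ with $\End_\sP(P)=K$. Then $A_F=K$, $C_F=K$ and $\comod_\sP(C_F)\simeq\sP$, with $R(v)=P$ and $F=\Id_\sP$. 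Now let $\sB=\mod(K)$, $S(v)=K$, $G=(-)\otimes_K P$ (a $K$-linear faithful exact functor) and $\beta=\id$. The universal property would force a functor $H:\sP\ra\mod(K)$ with $G\circ H\cong \Id_\sP$, i.e.\ every object of $\sP$ would be a finite direct sum of copies of $P$ --- false as soon as $\sP$ has a second simple object. The same example shows that no ``relative Tannakian reconstruction'' $\sB\hookrightarrow\comod_\sP(\eusm{C}_{\sB})$ along $G:\sB\ra\sP$ can exist: here $\sB=\mod(K)$ while $\comod_\sP(\eusm{C}_{\sB})\simeq\sP$. A symptom of the problem is that your argument never uses the hypothesis that $\sP$ is \emph{finite} (Noetherian and Artinian), only that it is Hom finite.

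The correct universal category must forget the $\sP$-structure of the objects $T(v)$, not retain it. The route the paper takes (via \cite[Theorem 2.1]{IvorraPNM}, sketched in \remarkref{RemaNoriCat}) is: first use Takeuchi's embedding theorem to produce a $K$-linear exact faithful functor $\omega:\sP\ra\mod(K)$ --- this is exactly where finiteness and Hom finiteness of $\sP$ enter --- then apply Nori's classical construction to the composite representation $\omega\circ T$, setting $\sA:=\comod(\eusm{C}_{\omega\circ T})$ with the representation $R=\overline{\omega\circ T}$; finally the functor $F:\sA\ra\sP$ and the $2$-morphism $\alpha$ are obtained by applying the classical universal property of $(\sA,R,F_{\omega\circ T},\Id)$ to the uplet $(\sP,T,\omega,\Id)$. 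In the single-vertex example above this yields $\sA\simeq\mod(\End_K(\omega(P)))\simeq\mod(K)$ with $F=(-)\otimes_K P$, which is indeed universal. To salvage your plan you would have to introduce the fibre functor $\omega$ at the outset and build the coalgebras from $\End_K(\omega\circ T|_{\scr E})$ rather than from $\End(T|_{\scr E})$.
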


It will be useful to keep in mind the following remark.
\begin{rema}\label{RemaNoriCat}
When $\scr P=\mod(K)$ the previous theorem is due to \name{M. Nori}.  More precisely, let $\scr E$ be a full subquiver of $\scr D$ with finitely many objects and $\End_K(T|_\scr E)$ be the subring of 
$$\prod_{q\in\scr E}\End_K(T(q))$$
formed by the elements $e=(e_q)_{q\in\scr E}$ such that $e_q\circ T(m)=T(m)\circ e_p $ for every object $p\in\scr E$ and every morphism $m:p\ra q$ in $\scr D$. Then, its linear dual
$${\eusm C}_{T|_\scr E}:=\End_K(T|_\scr E)^*$$
is a coassociative, counitary $K$-coalgebra which is finite dimensional over $K$. We may then consider the $K$-linear Abelian category $\comod({\eusm C}_T)$
of finite dimensional left comodules over the coassociative and counitary $K$-coalgebra
$${\eusm C}_T:=\colim_{\scr E\subseteq\scr D}{\eusm C}_{T|_{\scr E}}$$
where the colimit is taken over full subquivers of $\scr D$ with finitely many objects.

For every object $p\in\scr D$ the finite dimensional $\bbK$-vector space $T(p)$ inherits a structure of left ${\eusm C}_T$-comodule. This provides a representation $\overline{T}:\scr D\ra\comod({\eusm C}_T)$ such that $T=F_T\circ\overline{T}$ where $F_T:\comod({\eusm C}_T)\ra\mod(\bbK)$ is the forgetful functor. The main result proved by \name{M. Nori} is that the uplet $(\comod(\eusm{C}_T),\overline{T},F_T,\id)$ satisfies the universal property of \theoremref{NoriLength} when $\scr P=\mod(K)$.

The general case is deduced from \name{Nori}'s result. Indeed, let $\scr P$ be a finite and Hom finite $K$-linear Abelian category and $T:\scr D\ra \scr P$ be a representation. A result (see \cite[Corollary 4.3]{IvorraPNM}) that can be easily deduced from \cite[5.1 Theorem, 5.8]{Takeuchi} assures the existence of a $K$-linear exact faithful functor $\omega:\scr P\ra \mod(K)$. Let $\scr A:=\comod(\eusm{C}_{\omega\circ T})$ and consider the associated representation
$$R:=\overline{\omega\circ T}:\scr D\ra \comod(\eusm{C}_{\omega\circ T})=:\scr A.$$
The universal property of $(\scr A,R,F_{\omega\circ T},\Id)$ applied to the uplet $(\scr P,T,\omega,\Id)$ provides a $K$-linear exact faithful functor $F:\scr A\ra \scr P$ and an invertible natural transformation $\alpha : F\circ R\ra T$. One checks then that the uplet $(\scr A,R,F,\alpha)$ satisfies the universal property stated in \theoremref{NoriLength}  (see \cite{IvorraPNM} for details).
\end{rema}


\subsection{} Let $\scr D$ be a quiver and $T:\scr D\ra\mod(K)$ be a representation. Let $(\sB,G,R,\beta)$ be an uplet where $\sB$ is 
a $K$-linear Abelian category, $S:\scr D\ra\sB$ is a representation, $G:\sB\ra\sP$ is a $K$-linear exact faithful functor, and $\beta:G\circ S\ra T$ is an invertible natural transformation. By the universal property of \theoremref{NoriLength}, 
there exist a $\bbK$-linear functor $H:\comod(\eusm{C}_T)\ra\sB$ and two invertible natural transformations
$$\gamma:H\circ \overline{T}\xrightarrow{\simeq}S;\qquad \delta:G\circ H\xrightarrow{\simeq}F_T$$ 
such that the square
$$\xymatrix{{G\circ H\circ \overline{T}}\ar[r]^-{G\vc \gamma}\ar[d]^-{\delta\vc R} & {G\circ S}\ar[d]^-{\beta}\\
{F_T\circ \overline{T}}\ar@{=}[r] & {T}} $$
is commutative (here we use the notations from \remarkref{RemaNoriCat}). In \cite[Proposition 2.1]{MR3302623}, \name{J. Ayoub} and \name{L. Barbieri-Viale} have given a criterion for the functor $H$ to be an equivalence. The proof of our main result relies on this criterion. 

\begin{prop}[Ayoub \& Barbieri-Viale \cite{MR3302623}]\label{AyoubBV}
Assume the following conditions.
\begin{enumerate}
\item\label{CondA} for every vertices $p,q\in\scr D$, there exist $p\sqcup q$ in $\scr D$ and edges $i:p\ra p\sqcup q$, $j:q\ra p\sqcup q$ such that the map
$$S(i)+S(j): S(p)\oplus S(q)\ra S(p\sqcup q)$$
is an isomorphism in $\sB$ ;
\item\label{CondB} every object in $\sB$ is a quotient of an object of the form $S(p)$ for some vertex $p\in\scr D$.
\item\label{CondC} for every map $S(p)\ra B$ in $\sB$ there exists a finite sub-quiver $\mathscr{E}\subseteq \scr D$ containing
$p$ such that
$$\Ker\{T(p) = G\circ S(p)\ra G(B)\}$$
is a sub-$\End(T|_\scr E)$-module of $T(p)$.
\end{enumerate}
Then the functor $H:\comod(\eusm{C}_T)\ra\sB$ is an equivalence of categories. 
\end{prop}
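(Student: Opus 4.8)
The plan is to prove that $H$ is an equivalence by showing it is exact, fully faithful, and essentially surjective, bootstrapping from the generating objects $\overline{T}(p)$ and $S(p)$ and feeding the three hypotheses in one at a time.

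First I would record two preliminary reductions. Since $G$ is faithful and exact and $G\circ H\cong F_T$ through $\delta$, with $F_T$ itself faithful and exact, the functor $H$ is exact: a faithful exact functor between Abelian categories reflects exactness, so from the fact that $F_T$ preserves short exact sequences one deduces that $H$ does too. For the same reason both $G$ and $F_T$ reflect isomorphisms (if $G(f)$ is invertible then $G(\Ker f)$ and $G(\Coker f)$ vanish, whence $\Ker f=\Coker f=0$ by faithfulness). Applying this to \enumref{CondA}: the isomorphism $S(i)+S(j)$ forces, through $\beta$, the map $T(i)+T(j)\colon T(p)\oplus T(q)\to T(p\sqcup q)$ to be an isomorphism, and then, through $F_T$, the map $\overline{T}(i)+\overline{T}(j)$ to be an isomorphism. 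Thus the representation $\overline{T}$ enjoys the same biproduct property as $S$; in particular the essential images of $S$ and of $\overline{T}$ are each closed under finite direct sums.

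The heart of the argument is a subobject comparison: for each vertex $p$, the exact functor $H$ induces a bijection between the subobjects of $\overline{T}(p)$ in $\comod(\eusm{C}_T)$ and the subobjects of $S(p)$ in $\sB$. Through $\delta$ and $\beta$ both $F_T$ and $G$ embed these posets of subobjects into the subspaces of $T(p)$, compatibly and reflecting inclusions (faithful exactness of $G$ shows $K_1\subseteq K_2$ in $\sB$ as soon as $G(K_1)\subseteq G(K_2)$, by testing the composite $K_1\to S(p)/K_2$ against faithfulness). On the comodule side the subobjects of $\overline{T}(p)$ are exactly the subspaces of $T(p)$ that are sub-$\End(T|_\scr E)$-modules for some finite $\scr E\ni p$, since a finite-dimensional $\eusm{C}_T$-comodule is controlled by the coaction of $\eusm{C}_{T|_\scr E}=\End(T|_\scr E)^*$ for such an $\scr E$. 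On the $\sB$ side every subobject $K$ of $S(p)$ is the kernel of a quotient map $f\colon S(p)\to B$, and \enumref{CondC} is precisely the statement that $G(\Ker f)=\Ker\{T(p)=G\circ S(p)\to G(B)\}$ is such a sub-$\End(T|_\scr E)$-module, hence a subcomodule $W\subseteq\overline{T}(p)$ with $F_T(W)=G(K)$; exactness of $H$ then produces $H(W)\subseteq S(p)$ with $G(H(W))=F_T(W)=G(K)$, so $H(W)=K$ by inclusion-reflection, giving surjectivity, while injectivity is immediate from faithfulness of $F_T$. This interlocking of all three hypotheses is the step I expect to be the main obstacle, as it is where an abstract subobject of an object of $\sB$ must be matched, on the nose, with a stable subspace of $T(p)$.

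With the subobject bijection established, full faithfulness on generators follows from the graph description of morphisms. A morphism $\overline{T}(p)\to\overline{T}(q)$ is the same datum as a subobject $\Gamma\subseteq\overline{T}(p)\oplus\overline{T}(q)$, which via the isomorphism $\overline{T}(p\sqcup q)\cong\overline{T}(p)\oplus\overline{T}(q)$ is a subobject of $\overline{T}(p\sqcup q)$ projecting isomorphically onto the first summand, and likewise for $S(p)\to S(q)$ via $S(p\sqcup q)$. Because the direct sum decompositions match under $H$ and are seen identically by $G$ and $F_T$, the subobject bijection at the vertex $p\sqcup q$ carries graph subobjects to graph subobjects (the relevant projections being isomorphisms is detected by $G$ and $F_T$), so it restricts to an isomorphism $\Hom_{\comod(\eusm{C}_T)}(\overline{T}(p),\overline{T}(q))\cong\Hom_\sB(S(p),S(q))$. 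Finally I would extend everything to arbitrary objects. Condition \enumref{CondB}, together with the biproduct property from the first step, gives every $B\in\sB$ a presentation $S(p_1)\to S(p_0)\to B\to 0$, and every object of $\comod(\eusm{C}_T)$ an analogous presentation by the $\overline{T}(p)$; since $H$ is exact, essentially surjective on generators (through $\gamma$, where $H\overline{T}(p)\cong S(p)$), and fully faithful on generators, a routine presentation-and-five-lemma argument promotes full faithfulness and essential surjectivity to all of $\comod(\eusm{C}_T)$. Hence $H$ is an equivalence of categories.
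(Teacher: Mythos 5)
The paper does not actually prove this proposition---it is quoted from Ayoub and Barbieri-Viale \cite{MR3302623}---so your attempt can only be measured against the standard argument. Your architecture is the correct one: exactness of $H$ deduced from $G\circ H\cong F_T$ with $G$ faithful exact; transport of condition \enumref{CondA} to $\overline{T}$; the identification of subobjects of $\overline{T}(p)$ with $\End(T|_{\scr E})$-stable subspaces of $T(p)$; the use of condition \enumref{CondC} to realize every subobject of $S(p)$ (written as $\Ker(S(p)\ra S(p)/K)$) as $H$ of a subcomodule; and the graph trick for fullness. These are precisely the points where the three hypotheses enter, and you execute them correctly.

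The gap is in your final paragraph. You assert that every object of $\comod(\eusm{C}_T)$ admits a presentation $\overline{T}(p_1)\ra\overline{T}(p_0)\ra M\ra 0$, and your five-lemma step for fullness leans on this. But the only a priori generation statement available is that every object is a \emph{subquotient} of a finite direct sum of objects $\overline{T}(p)$ (this is what is invoked, via \cite[Proposition 7.1.16]{BookNori}, in the proof of \propositionref{NatTransf}); it is easy to build quivers for which a simple subobject of some $\overline{T}(p)$ receives no nonzero map from any direct sum of $\overline{T}(q)$'s, so the presentation you want does not exist in general. (A posteriori, once $H$ is known to be an equivalence, condition \enumref{CondB} transports to $\comod(\eusm{C}_T)$ and yields such presentations---but using that here is circular.) The presentation on the $\sB$-side, which condition \enumref{CondB} does provide, is enough for essential surjectivity together with fullness on generators and exactness of $H$; it is fullness on arbitrary objects that needs repair. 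The fix is to globalize the subobject comparison rather than the presentations: your bijection between subobjects of $\overline{T}(P)$ and of $S(P)$, for $P=p_1\sqcup\dots\sqcup p_n$, is an isomorphism of posets (both sides embed order-compatibly into the subspaces of $T(P)$), hence for any subquotient $X=X_2/X_1$ of $\overline{T}(P)$ it restricts to a bijection between subobjects of $X$ and subobjects of $H(X)$. Since $M\oplus N$ is a subquotient of some $\overline{T}(P)$, the graph argument then yields $\Hom(M,N)\cong\Hom(H(M),H(N))$ for arbitrary $M,N$, completing the proof.
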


\subsection{} Let $\scr P_1$ and $\scr P_2$ be two finite and Hom finite $K$-linear Abelian categories. Let $\scr D_1$, $\scr D_2$ be quivers, $D:\scr D_1\ra\scr D_2$ be a morphism of quivers and
$$T_1:\scr D_1\ra\sP_1,\qquad T_2:\scr D_2\ra\sP_2$$
 be two representations.
Let $(\sA_1,F_1,R_1,\alpha_1)$ and $(\sA_2,F_2,R_2,\alpha_2) $ be uplets obtained by applying \theoremref{NoriLength} to the representations $T_1 $ and $T_2$ respectively.

The next proposition shows that certain exact functors can be lifted to universal categories (for a proof, see \cite[Proposition 6.6]{IvorraPNM}).

\begin{prop}\label{Fonc3}
Let $(\Phi,\phi)$ be a pair where $\Phi:\sP_1\ra\sP_2$ is an \emph{exact} $K$-linear functor and $\phi:\Phi\circ T_1\ra T_2\circ D$ is an isomorphism of representations.
There exist an exact functor $\Psi:\sA_1\ra\sA_2$, an invertible natural transformation $\rho:\Phi\circ F_1\ra F_2\circ \Psi $, and an isomorphism of representations $\varrho:\Psi\circ R_1\ra R_2\circ D$ such that
\begin{equation}\label{Dia1Fonc3}
\xymatrix@R=.4cm{{\Phi\circ F_1\circ R_1}\ar[r]^-{\Phi\star\alpha_1}\ar[dd]_-{\rho\star R_1} & {\Phi\circ T_1}\ar[rd]^-{\phi} & {}\\
{} & {} & {T_2\circ D}\\
{F_2\circ\Psi\circ R_1}\ar[r]_-{F_2\star\varrho} & {F_2\circ R_2\circ D}\ar[ru]_-{\alpha_2\star D} &{}}
\end{equation}
is commutative.
\end{prop}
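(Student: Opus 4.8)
The plan is to realise $\Psi$ through the universal property of $\sA_1$ recorded in \theoremref{NoriLength}. The only genuine difficulty is that this universal property tests against faithful exact functors landing in $\sP_1$, whereas the datum $(\Phi,\phi)$ naturally lives over $\sP_2$; crucially, $\Phi$ is only assumed \emph{exact}, not faithful. To bridge this I would introduce the iso-comma category $\sB$ of $\Phi$ and $F_2$: its objects are triples $(b,X,\sigma)$ with $b$ an object of $\sA_2$, $X$ an object of $\sP_1$, and $\sigma:\Phi(X)\xrightarrow{\simeq}F_2(b)$ an isomorphism, a morphism $(b,X,\sigma)\ra(b',X',\sigma')$ being a pair $(g:b\ra b',\,h:X\ra X')$ with $F_2(g)\circ\sigma=\sigma'\circ\Phi(h)$. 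Since $\Phi$ and $F_2$ are exact, kernels and cokernels in $\sB$ are computed componentwise, the comparison isomorphism on the (co)kernels being induced by $\sigma,\sigma'$; hence $\sB$ is a $K$-linear Abelian category and the two projections $P:\sB\ra\sA_2$, $(b,X,\sigma)\mapsto b$, and $Q:\sB\ra\sP_1$, $(b,X,\sigma)\mapsto X$, are exact. The construction is designed around a tautological invertible natural transformation $\varsigma:\Phi\circ Q\xrightarrow{\simeq}F_2\circ P$ whose component at $(b,X,\sigma)$ is $\sigma$ itself.

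The key observation is that $Q$ is \emph{faithful}: if $Q(g,h)=h$ vanishes then $\Phi(h)=0$, so $F_2(g)\circ\sigma=0$, hence $F_2(g)=0$ as $\sigma$ is invertible, and finally $g=0$ because $F_2$ is faithful. Thus the faithfulness of $F_2$ is transported to $Q$ even though $\Phi$ is not assumed faithful. I would then lift the diagram data to a representation $S:\scr D_1\ra\sB$ by setting $S(p):=(R_2(Dp),\,T_1(p),\,\sigma_p)$ with $\sigma_p:=(\alpha_2)_{Dp}^{-1}\circ\phi_p:\Phi T_1(p)\xrightarrow{\simeq}F_2R_2(Dp)$, sending each edge to the evident pair; that $S$ is well defined on edges is exactly the statement that $\phi$ is a morphism of representations together with the naturality of $\alpha_2$. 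By construction $Q\circ S=T_1$ on the nose, so $(\sB,S,Q,\id)$ is an admissible test datum over $\sP_1$.

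Applying the universal property of $\sA_1$ to this datum yields a $K$-linear functor $H:\sA_1\ra\sB$ together with invertible natural transformations $\gamma:H\circ R_1\xrightarrow{\simeq}S$ and $\delta:Q\circ H\xrightarrow{\simeq}F_1$ fitting into the universal square (here $\beta=\id$). I then set $\Psi:=P\circ H:\sA_1\ra\sA_2$, define $\varrho:=P\star\gamma:\Psi\circ R_1=PHR_1\xrightarrow{\simeq}PS=R_2\circ D$ (since $P\circ S=R_2\circ D$), and define $\rho:=(\varsigma\star H)\circ(\Phi\star\delta)^{-1}:\Phi\circ F_1\xrightarrow{\simeq}\Phi\circ Q\circ H\xrightarrow{\simeq}F_2\circ P\circ H=F_2\circ\Psi$. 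The functor $\Psi$ is exact: $P$ is exact, and $H$ is exact because $Q\circ H\simeq F_1$ is exact and the faithful exact functor $Q$ reflects exactness. The commutativity of \eqref{Dia1Fonc3} follows formally from the universal square satisfied by $(\gamma,\delta)$, the definitions of $\rho,\varrho$, the identity $\sigma_p=(\alpha_2)_{Dp}^{-1}\phi_p$, and the naturality of $\varsigma$.

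The steps I expect to be most delicate are, first, the verification that $\sB$ is genuinely Abelian with $P,Q$ exact — this is where exactness of \emph{both} $\Phi$ and $F_2$ is used essentially, to glue the componentwise kernels and cokernels along the isomorphisms $\sigma$ — and, second, the pentagon \eqref{Dia1Fonc3}, whose commutativity is a diagram chase threading $\varsigma$, $\delta$ and the whiskered universal square through the composites. I emphasise that faithfulness of $\Phi$ is nowhere required: the virtue of the iso-comma construction is precisely that it isolates the single place where a faithful functor is needed, namely the faithfulness of $Q$, and supplies it from $F_2$ rather than from $\Phi$.
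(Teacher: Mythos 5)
Your argument is correct, and it is essentially the argument the paper relies on: the paper defers the proof to \cite[Proposition 6.6]{IvorraPNM}, where the lift is obtained by testing the universal property of $\sA_1$ against exactly this kind of $2$-fibre product $\sP_1\times_{\sP_2}\sA_2$ (the same device the paper itself deploys in the proof of \propositionref{prop:equiv-betti-dr}), with faithfulness supplied by $F_2$ rather than by $\Phi$. Your verification of the pentagon via $Q(\gamma_p)=(\alpha_1)_p\circ\delta_{R_1(p)}$ and the naturality of the tautological transformation $\varsigma$ is complete and matches the expected computation.
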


\subsection{} In this work we will need to lift natural transformations as well. Let $D_1,D_2:\scr D_1\ra \scr D_2$ be morphism of quivers. Let $(\Phi_1,\phi_1)$, $(\Phi_2,\phi_2)$ be a pairs where $\Phi_1,\Phi_2:\sP_1\ra\sP_2$ are \emph{exact} $K$-linear functor and $\phi_1:\Phi_1\circ T_1\ra T_2\circ D_1$, $\phi_2: \Phi_2\circ T_1\ra T_2\circ D_2$  are isomorphisms of representations.

By \propositionref{Fonc3}, there exist exact functors $\Psi_1,\Psi_2:\sA_1\ra\sA_2$, invertible natural transformations $\rho_1:\Phi_1\circ F_1\ra F_2\circ \Psi_1 $,  $\rho_2:\Phi_2\circ F_1\ra F_2\circ \Psi_2 $, and isomorphisms of representations $\varrho_1:\Psi_1\circ R_1\ra R_2\circ D_1 $, $\varrho_2:\Psi_2\circ R_1\ra R_2\circ D_2 $ such that the corresponding diagrams as in \eqref{Dia1Fonc3} are commutative.
\begin{prop}\label{NatTransf}
Let $(\theta,\theta_D)$ be a pair where $\theta:\Phi_1\ra\Phi_2$ and $\theta_D:D_1\ra D_2$  are natural transformations such that the square
$$\xymatrix{{\Phi_1\circ T_1}\ar[r]^-{\phi_1}\ar[d]^-{\theta\vc T_1} & {T_2\circ D_1}\ar[d]^{T_2\vc\theta_D}\\
{\Phi_2\circ T_1}\ar[r]^-{\phi_2} & {T_2\circ D_2}} $$
is commutative. Then there exists one and only one natural transformation $\overline{\theta}: \Psi_1\ra\Psi_2$ that makes the squares
$$\xymatrix{{\Psi_1\circ R_1}\ar[r]^-{\varrho_1}\ar[d]^-{\overline{\theta}\vc R_1} & {R_2\circ D_1}\ar[d]^{R_2\vc\theta_D}\\
{\Psi_2\circ R_1}\ar[r]^-{\varrho_2} & {R_2\circ D_2}}
\qquad
\xymatrix{{\Phi_1\circ F_1}\ar[r]^-{\rho_1}\ar[d]^-{\theta\vc F_1} & {F_2\circ \Psi_1}\ar[d]^{F_2\vc\overline{\theta}}\\
{\Phi_2\circ F_1}\ar[r]^-{\rho_2} & {F_2\circ \Psi_2}}
$$
commutative. 
\end{prop}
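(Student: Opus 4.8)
The plan is to prove existence and uniqueness of the lift $\overline{\theta}:\Psi_1\Rightarrow\Psi_2$ separately: uniqueness is soft and follows from faithfulness, while existence is the real content and forces me to leave the purely $2$-categorical formalism and open up the comodule description of \remarkref{RemaNoriCat}. Throughout I write $\mu$ for the natural transformation
$$F_2\vc\overline{\theta}\overset{!}{=}\rho_2\circ(\theta\vc F_1)\circ\rho_1^{-1}=:\mu\colon F_2\circ\Psi_1\Rightarrow F_2\circ\Psi_2,$$
which is completely determined by the given data (each component $(\rho_i)_A$ is invertible, being a component of the invertible $\rho_i$).

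For uniqueness, observe that the right-hand (the $\rho$) square is exactly the assertion $F_2\vc\overline{\theta}=\mu$. Since $F_2$ is faithful, two natural transformations $\Psi_1\Rightarrow\Psi_2$ with the same image under $F_2$ agree componentwise, so at most one $\overline{\theta}$ can satisfy the $\rho$-square, hence at most one can satisfy both squares. Note this uses the $\rho$-square only; the left-hand ($\varrho$) square is therefore an \emph{additional} constraint that the candidate I construct must still be shown to satisfy.

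Existence is where the plain universal property of \theoremref{NoriLength} does not suffice: its second bullet compares two genuine factorizations and consequently only ever produces \emph{invertible} comparison $2$-morphisms, whereas $\theta$—and hence the desired $\overline{\theta}$—is an arbitrary, possibly non-invertible, natural transformation. I would instead pass to the model of \remarkref{RemaNoriCat}: composing with faithful exact $\omega_i\colon\sP_i\to\mod(K)$, realize $\sA_i=\comod(C_i)$ for the coalgebras $C_i$ attached to $\omega_i\circ T_i$, so that lifting a transformation along $F_2$ becomes lifting it along the fibre functor $\omega_2\circ F_2\colon\comod(C_2)\to\mod(K)$. The prescribed underlying transformation is $\omega_2\vc\mu$; since a map of underlying vector spaces lifts (uniquely) to a morphism of $\comod(C_2)$ precisely when it is $C_2$-colinear, it is enough to check that every component of $\mu$ respects the $C_2$-coactions on $\Psi_1(A)$ and on $\Psi_2(A)$.

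The heart of the argument is this colinearity check, and it is exactly what the hypothesis square encodes. On a generating object $A=R_1(p)$ the coactions on $\Psi_i(R_1 p)$ are, via the isomorphisms $\varrho_i$ and the invertible $\alpha_2$, the tautological $C_2$-coactions on $R_2(D_i\,p)$; unwinding the definition of $\mu$ through $\alpha_1$, through $\phi_1,\phi_2$, through the two instances of diagram~\eqref{Dia1Fonc3} (for $\Psi_1$ and for $\Psi_2$), and through the given commutative square, identifies the corresponding component of $\mu$ with $(T_2\vc\theta_D)_p$, i.e.\ with the morphism $R_2(\theta_{D,p})$ induced by $\theta_D$—which is colinear simply because $R_2$ lands in $\comod(C_2)$. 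One then propagates colinearity from the $R_1(p)$ to every object by the generation property of Nori's category (each object is a subquotient of a finite sum of the $R_1(p)$) together with exactness of $\Psi_1$, $\Psi_2$ and of the fibre functor. This produces $\overline{\theta}\colon\Psi_1\Rightarrow\Psi_2$ with $F_2\vc\overline{\theta}=\mu$, so the $\rho$-square holds by construction, and the remaining $\varrho$-square is then checked by applying the faithful $F_2$ and performing the formal diagram chase against \eqref{Dia1Fonc3} and the hypothesis square. The main obstacle is precisely the descent of $\mu$ through $F_2$, that is, the colinearity verification: this is the one step where a genuinely new, non-invertible morphism must be \emph{constructed} rather than merely compared, and it is what makes the comodule model unavoidable.
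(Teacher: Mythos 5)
Your proof is correct, and its skeleton --- define the lift on the generators $R_1(p)$ via $\varrho_1$, $\varrho_2$ and $R_2(\theta_{D,p})$, then propagate to arbitrary objects using the fact that every object of $\sA_1$ is a subquotient of a finite direct sum of such generators, with uniqueness coming from faithfulness of $F_2$ --- is the same as the paper's. The difference lies in how you certify that the prescribed underlying transformation $\mu=\rho_2\circ(\theta\vc F_1)\circ\rho_1^{-1}$ actually comes from a morphism in $\sA_2$: you pass to the comodule realization $\sA_2\simeq\comod(\eusm{C}_{\omega_2\circ T_2})$ of \remarkref{RemaNoriCat} and verify colinearity of each component, propagating colinearity along surjective and injective colinear maps; the paper instead stays intrinsic, defining $\overline{\theta}_X$ for each $X$ as the (necessarily unique) morphism of $\sA_2$ lying over $\mu_X$, and obtaining it on subquotients by a factorization argument (e.g.\ $\Psi_2(s)\circ\overline{\theta}_Y$ kills $\Ker\Psi_1(s)$, checked after applying the faithful exact $F_2$). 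The two mechanisms are interchangeable --- ``colinear'' is precisely ``lies in the image of $F_2$ on Hom groups'' --- so your assertion that the comodule model is \emph{unavoidable} is an overstatement: all that is genuinely needed is that $F_2$ be faithful and exact together with the generation property, and the abstract version has the minor advantage of not depending on a choice of fibre functor $\omega_2$. Your opening observation, that the universal property of \theoremref{NoriLength} by itself only produces invertible comparison $2$-morphisms and hence cannot yield $\overline{\theta}$ directly, is correct and is indeed the reason a hands-on construction is required; your computation identifying $\mu_{R_1(p)}$ with $F_2(R_2(\theta_{D,p}))$ via $\alpha_1$, $\alpha_2$, \eqref{Dia1Fonc3} and the hypothesis square is exactly the compatibility the paper uses implicitly when it defines $\overline{\theta}_{R_1(p)}$ by the $\varrho$-square and asserts it also satisfies the $\rho$-square.
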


\begin{proof}
Let $X$ be an object in $\scr A_1$. Let us sketch the construction of a morphism
$\overline{\theta}_X:\Psi_1(X)\ra\Psi_2(X)$ in $\scr A_2$ which makes the square
$$\xymatrix{{\Phi_1(F_1(X))}\ar[r]^-{\rho_{1,X}}\ar[d]^-{\theta_{F_1(X)}} & {F_2(\Psi_1(X))}\ar[d]^-{F_2(\overline{\theta}_X)}\\
{\Phi_2(F_1(X))}\ar[r]^-{\rho_{2,X}} & {F_2(\Psi_2(X))}} $$
commutative. Since $F_2$ is faithful, such a morphism is necessarily unique. When $X=R_1(p)$ for $p\in\scr D_1$, we define $\overline{\theta}_X$ to be the unique morphism that makes the square
$$\xymatrix{{\Psi_1(X)}\ar[r]^-{\varrho_{1,p}}\ar[d]^-{\overline{\theta}_X} & {R_2(D_1(p))}\ar[d]^-{R_2(\theta_{D,p})}\\
{\Psi_2(X)}\ar[r]^-{\varrho_{2,p}} & {R_2(D_2(p))}} $$
commutative. This defines also $\overline{\theta}_X $ when $X$ is a finite direct sum of such objects. Assume now the existence of an epimorphism  $s:Y\ra X$ in $\scr A_1$ where $Y$ is an object for which $\overline{\theta}_Y$ has been constructed.  It is then enough to check the existence of a factorization $$\xymatrix{{\Psi_1(Y)}\ar[r]^-{\Psi_1(s)}\ar[d]^-{\overline{\theta}_Y} & {\Psi_1(X)}\ar@{.>}[d]\ar[r] & {0}\\
{\Psi_2(Y)}\ar[r]^-{\Psi_2(s)} & {\Psi_2(X)}\ar[r] & {0.}} $$
 As the rows are exact, this amounts to checking that $\Psi_2(s)\circ\overline{\theta}_Y$ vanishes on the kernel of $\Psi_1(s)$. But this is true since it is after applying $F_2$ and $F_2$ is faithful.

Similarly, one shows the existence of $\overline{\theta}_X$ when $X$ is any subobject of an object $Y$ in $\scr A_1$ for which $\overline{\theta}_Y$ has already been constructed. 

This concludes the proof since by \cite[Proposition 7.1.16]{BookNori} every object in $\scr A_1$ is a subquotient of a finite direct sum of objects of the form $X=R_1(p)$ for $p\in\scr D_1$.
\end{proof}

\begin{rema}\label{RemaMono}
Note that since $F_2$ is a $K$-linear exact and faithful functor, if $\theta$ is a monomorphism (resp. epimorphism) then $\overline{\theta}$ is a monomorphism (resp. epimorphism). 
\end{rema}

\section{Nori motives of curves with modulus}

\subsection{} 
In this subsection, we collect some preliminary results
on cohomology of curves.

\begin{prop}\label{prop:funct-dif}
Let $f : C \to C'$ be a finite $k$-morphism
of smooth proper connected $k$-curves.
Let $D$ and $D'$ be effective divisors on $C$ and $C'$ respectively.
\begin{enumerate}
\item 
Suppose $D \leqslant f^*D'$. 
Then the canonical map
$\Osheaf_{C'} \to f_* \Osheaf_C$ induces
$\Isheaf_{D'} \to f_* \Isheaf_{D}$
and the trace map
$f_* \Omega^1_C \to \Omega^1_{C'}$ induces
$f_*(\Isheaf_D^{-1} \Omega^1_C) \to \Isheaf_{D'}^{-1}\Omega^1_{C'}$.
\item 
Suppose $D-D_\red \ge f^*(D'-D_\red')$ and $D_\red \ge (f^*D')_\red$. 
(The latter condition is equilavent to 
$f(C \setminus |D|) \subset f(C' \setminus |D'|)$).
Then the canonical map
$\Omega^1_{C'} \to f_* \Omega^1_C$ induces
$\Isheaf_{D'}^{-1} \Omega_{C'} \to f_*(\Isheaf_{D}^{-1}\Omega_{C})$
and the trace map
$f_* \Osheaf_{C} \to \Osheaf_{C'}$ induces
$f_* \Isheaf_{D} \to \Isheaf_{D'}$.
\end{enumerate}
(Recall that by our convention 
$k$ is a subfield of $\bbC$,
that
$\Omega_C^1$ is the sheaf of K\"ahler differentials on $C$ relative to $k$,
and that $\Isheaf_D$
is the ideal sheaf defining $D$.)
\end{prop}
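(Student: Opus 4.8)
The plan is to reduce all four assertions to a computation in the completed local rings at closed points and to translate each of them into an inequality between orders of vanishing. Since $f$ is finite, $f_*$ is exact and commutes with passage to stalks along the fibre $f^{-1}(x')$; moreover every sheaf occurring is a fractional–ideal twist of $\Osheaf$ or of $\Omega^1$, hence invertible, and each of the four maps is a map of coherent sheaves on $C'$. Whether such a map carries a prescribed subsheaf into another can therefore be tested on stalks, i.e. at each pair of points $x\mapsto x'$ (meaning $x'=f(x)$) separately, inside the DVRs $\Osheaf_{C,x}$ and $\Osheaf_{C',x'}$. I fix a ramification index $e_x$ at $x$, multiplicities $n_x$ of $D$ at $x$ and $m_{x'}$ of $D'$ at $x'$, and uniformisers $t$ at $x$ and $s$ at $x'$, so that $s=u\,t^{e_x}$ with $u$ a unit.

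Next I would record the numerical dictionary. As $k\subset\bbC$ has characteristic zero, generic smoothness makes $f$ generically \'etale, so all ramification is tame and the different $\eufm{d}_f$ satisfies $v_x(\eufm{d}_f)=e_x-1$ at every $x$. From $d(f^*s)=d(u\,t^{e_x})=t^{e_x-1}(e_x u+u't)\,dt$ with $e_x u$ a unit one gets the two basic identities $v_x(f^*g)=e_x\,v_{x'}(g)$ for a function $g$ and $v_x(f^*\eta)=e_x\,v_{x'}(\eta)+(e_x-1)$ for a differential $\eta$. On the divisor side, the hypothesis $D\le f^*D'$ of part (1) reads $n_x\le e_x m_{x'}$, while the two hypotheses of part (2), after disentangling the reduced parts, are together equivalent to the single inequality $n_x\ge e_x m_{x'}-(e_x-1)$ for all $x\mapsto x'$ (for $m_{x'}=0$ the right–hand side is $\le 0$, so this is automatic; for $m_{x'}=1$ it forces $n_x\ge 1$, recovering the support condition $D_\red\ge (f^*D')_\red$; and for $m_{x'}\ge 2$ it is $n_x-1\ge e_x(m_{x'}-1)$).

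With this dictionary the two statements about the canonical (pullback) maps are immediate. For part (1) a local section $g$ of $\Isheaf_{D'}$ has $v_{x'}(g)\ge m_{x'}$, whence $v_x(f^*g)=e_x v_{x'}(g)\ge e_x m_{x'}\ge n_x$, so $f^*g$ is a section of $\Isheaf_D$. For part (2) a local section $\eta$ of $\Isheaf_{D'}^{-1}\Omega^1_{C'}$ has $v_{x'}(\eta)\ge -m_{x'}$, whence $v_x(f^*\eta)=e_x v_{x'}(\eta)+(e_x-1)\ge -e_x m_{x'}+e_x-1\ge -n_x$ by the inequality extracted above, so $f^*\eta$ is a section of $\Isheaf_D^{-1}\Omega^1_C$.

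For the two trace statements I would invoke the different. Under the identification $\Omega^1_C=\eufm{d}_f^{-1}\otimes f^*\Omega^1_{C'}$ (coming from $f^*\Omega^1_{C'}=\eufm{d}_f\,\Omega^1_C$), the trace $f_*\Omega^1_C\to\Omega^1_{C'}$ is $\mathrm{Tr}_{L/K}\colon f_*\eufm{d}_f^{-1}\to\Osheaf_{C'}$ twisted by $\Omega^1_{C'}$, and the defining property of the inverse different gives, for fractional ideals, $\mathrm{Tr}_{L/K}(\mathfrak a)\subseteq\mathfrak b\iff\mathfrak a\subseteq f^*\mathfrak b\cdot\eufm{d}_f^{-1}$. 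Taking $(\mathfrak a,\mathfrak b)=(\Isheaf_D^{-1}\eufm{d}_f^{-1},\Isheaf_{D'}^{-1})$ reduces the claim of part (1) to $f^*\Isheaf_{D'}\subseteq\Isheaf_D$, i.e. to $D\le f^*D'$; taking $(\mathfrak a,\mathfrak b)=(\Isheaf_D,\Isheaf_{D'})$ reduces the claim of part (2) to $\Isheaf_D\subseteq f^*\Isheaf_{D'}\cdot\eufm{d}_f^{-1}$, i.e. to $n_x\ge e_x m_{x'}-(e_x-1)$. (Equivalently, within each part the trace statement is adjoint to the pullback statement under the local residue pairing, which identifies $\Isheaf_D$ with the Serre dual $\Isheaf_D^{-1}\Omega^1_C$ and $f^*$ with $\mathrm{Tr}$: concretely, for $g'\in\Isheaf_{D'}$ and $\omega\in\Isheaf_D^{-1}\Omega^1_C$ the projection formula gives $g'\cdot\mathrm{Tr}(\omega)=\mathrm{Tr}(f^*g'\cdot\omega)$ with $f^*g'\cdot\omega$ regular by part (1)-first, so $\mathrm{Tr}(\omega)$ lies in $\Isheaf_{D'}^{-1}\Omega^1_{C'}$; symmetrically for part (2).) I expect the only genuinely delicate point to be fixing the normalisation of the trace on differentials and pinning down its relation to the different; once $v_x(\eufm{d}_f)=e_x-1$ is in force — which is unconditional here, characteristic zero making all ramification tame — the remaining bookkeeping of orders of vanishing is routine.
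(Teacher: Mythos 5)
Your argument is correct and follows essentially the same route as the paper: the paper also reduces to a pointwise statement in discrete valuation rings (its Lemma~\ref{lem:trace-differential}, with the numerical conditions $n_i\le e_i m$ and $n_i-1\ge e_i(m-1)$ matching your dictionary exactly), treats the pullback statements as elementary valuation computations, and delegates the trace statement of part (2) to the relation between the trace and the different in Serre's \emph{Local Fields} (Ch.~III, Prop.~7, 13) --- precisely the criterion $\mathrm{Tr}(\mathfrak a)\subseteq\mathfrak b\iff\mathfrak a\subseteq f^*\mathfrak b\cdot\eufm d_f^{-1}$ you invoke. The only difference is presentational: you make explicit, via tameness and $v_x(\eufm d_f)=e_x-1$, the bookkeeping that the paper labels ``elementary.''
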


\begin{proof}
This follows from the following elementary lemma. 
\end{proof}

\begin{lemm}\label{lem:trace-differential}
Let $K$ be a function field of one variable over $k$,
and let $R \subset K$ be a discrete valuation ring containing $k$.
Let $L$ be a finite extension of $K$
and let $S$ be the integral closure of $R$ in $L$.
Denote by $\mathfrak{m}$ the maximal ideal of $R$,
and by $\mathfrak{n}_1, \dots, \mathfrak{n}_r$ the maximal ideals of $S$.
Let $e_i \in \bbZ_{>0}$ be the ramification index of $\mathfrak{n}_i$.
Let $m, n_1, \dots, n_r \ge 1$ be integers
and put 
$\mathfrak{n}^n := \mathfrak{n}_1^{n_1} \cdots \mathfrak{n}_r^{n_r},$
$\mathfrak{n}^{-n} := \mathfrak{n}_1^{-n_1} \cdots \mathfrak{n}_r^{-n_r}.$
\begin{enumerate}
\item 
Suppose $n_i \le e_i m$ for all $i$. 
Then
the canonical map $K \to L$ 
sends $\mathfrak{m}^m$ to $\mathfrak{n}^n$,
and the trace map $\Omega_{L/k}^1 \to \Omega_{K/k}^1$
sends
$\mathfrak{n}^{-n} \Omega_{S/k}^1$ to
$\mathfrak{m}^{-m} \Omega_{R/k}^1$.
\item 
Suppose  $n_i-1 \ge e_i (m-1)$ for all $i$. Then
the canonical map $\Omega_{K/k}^1 \to \Omega_{L/k}^1$ 
sends
$\mathfrak{m}^{-m} \Omega_{R/k}$ 
to $\mathfrak{n}^{-n} \Omega_{S/k}$,
and the trace map $L \to K$ sends
$\mathfrak{n}^{n}$ to $\mathfrak{m}^m.$
\end{enumerate}
\end{lemm}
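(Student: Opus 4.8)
The plan is to reduce everything to a computation with invertible fractional ideals over the Dedekind domains $R \subset S$, exploiting that in characteristic zero all ramification is tame. First I would localize $S$ at the finite set $\{\mathfrak{n}_1,\dots,\mathfrak{n}_r\}$, so that $S$ becomes a semilocal Dedekind domain and every ideal in sight is an invertible fractional ideal. Since $k$ has characteristic zero the extension $L/K$ is separable and $R$, $S$ are smooth over $k$; hence $\Omega^1_{R/k}$ and $\Omega^1_{S/k}$ are invertible, and for a uniformizer $t$ of $R$ the differential $dt$ trivializes both $\Omega^1_{K/k} = K\,dt$ and, by Nakayama (using $\Omega^1_{\kappa/k}=0$ for the separable residue fields $\kappa$), $\Omega^1_{R/k} = R\,dt$. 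With this trivialization the assertion of part (1) about the canonical map $K \to L$ reduces at once to the factorization $\mathfrak{m}S = \prod_i \mathfrak{n}_i^{e_i}$: the image of $\mathfrak{m}^m$ is $\mathfrak{m}^m S = \prod_i \mathfrak{n}_i^{e_i m}$, which lies inside $\mathfrak{n}^n = \prod_i \mathfrak{n}_i^{n_i}$ exactly when $n_i \le e_i m$. The three remaining assertions all hinge on the coefficient of $\Omega^1_{S/k}$ relative to $dt$, which I compute next.

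That coefficient is the heart of the matter. Writing $t = u_i t_i^{e_i}$ with $t_i$ a uniformizer at $\mathfrak{n}_i$ and $u_i$ a unit, differentiation gives $dt = (e_i u_i t_i^{e_i-1} + \cdots)\,dt_i$; here characteristic zero makes $e_i$ invertible, so $v_{\mathfrak{n}_i}(dt) = e_i - 1$. Consequently, as a fractional ideal times $dt$ one has $\Omega^1_{S/k} = \mathfrak{d}_{S/R}^{-1}\,dt$ with $\mathfrak{d}_{S/R} = \prod_i \mathfrak{n}_i^{e_i - 1}$ the (tame) different, and under the $dt$-trivialization the trace map on differentials $\Omega^1_{L/k} \to \Omega^1_{K/k}$ is simply $x\,dt \mapsto \mathrm{Tr}_{L/K}(x)\,dt$.

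With the coefficient in hand, the canonical-map claim of part (2) is direct: the image $\mathfrak{m}^{-m}S\,dt = \prod_i \mathfrak{n}_i^{-e_i m}\,dt$ lies in $\mathfrak{n}^{-n}\Omega^1_{S/k} = \prod_i \mathfrak{n}_i^{-n_i-(e_i-1)}\,dt$ precisely when $n_i - 1 \ge e_i(m-1)$. The two trace assertions then follow from trace--different duality, namely $\mathrm{Tr}_{L/K}(\mathfrak{a}) \subseteq \mathfrak{c}$ if and only if $\mathfrak{a} \subseteq \mathfrak{c}\,\mathfrak{d}_{S/R}^{-1}$. For part (1), applying this with $\mathfrak{a} = \mathfrak{n}^{-n}\mathfrak{d}_{S/R}^{-1}$ and $\mathfrak{c} = \mathfrak{m}^{-m}$, the two copies of $\mathfrak{d}_{S/R}$ cancel and the condition becomes $\mathfrak{n}^{-n} \subseteq \mathfrak{m}^{-m}S$, i.e. $n_i \le e_i m$. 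For part (2), applying it with $\mathfrak{a} = \mathfrak{n}^n$ and $\mathfrak{c} = \mathfrak{m}^m$ gives $\mathfrak{n}^n \subseteq \mathfrak{m}^m \mathfrak{d}_{S/R}^{-1}$, i.e. $n_i \ge e_i(m-1)+1$, which is the hypothesis $n_i - 1 \ge e_i(m-1)$.

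I expect the main obstacle to be the bookkeeping forced by the fact that the differentials are taken over the base field $k$ rather than over $R$: one must justify that $\Omega^1_{R/k}$ and $\Omega^1_{S/k}$ are invertible and trivialized by $dt$, that $v_{\mathfrak{n}_i}(dt) = e_i - 1$ — equivalently that the Kähler different agrees with the Dedekind different, which is exactly tameness and is where characteristic zero is indispensable — and that the geometric trace of differentials is $\mathrm{Tr}_{L/K}$ on $dt$-coefficients. Once this dictionary between $\Omega^1_{\,\cdot\,/k}$ and the different is established, all four statements become the same elementary inequality.
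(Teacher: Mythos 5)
Your proof is correct and follows essentially the same route as the paper: the paper simply cites \cite[Chapter III, Propositions 7, 13]{serreCL} — which are precisely the trace--different duality and the tame computation $v_{\mathfrak{n}_i}(\mathfrak{d}_{S/R})=e_i-1$ that you use — and declares the remaining statements elementary. You have merely written out in full the $dt$-trivialization of $\Omega^1_{R/k}$ and $\Omega^1_{S/k}$ and the resulting valuation bookkeeping that the paper leaves implicit, and all four inequalities check out.
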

\begin{proof}
The last statement of (2) follows from
\cite[Chapter III, Propositions 7, 13]{serreCL}.
All other statements are elementary.
\end{proof}

\begin{prop}\label{prop:uv}
Let $C$ be a smooth proper curve over $k$
and let $D$ be an effective divisor on $C$.
We set
\[ U(C, D):= H^0(C, \Isheaf_{D_\red}/\Isheaf_D),
\quad
V(C, D):= H^0(C, \Isheaf_{D_\red}\Isheaf_D^{-1}/\Osheaf_C).
\]
Then the differential map induces isomorphisms
\begin{align*}
d : U(C, D) \overset{\cong}{\ra}
H^0(C, (\Osheaf_C/\Isheaf_D \Isheaf_{D_\red}^{-1}) \otimes \Omega_C^1),
\\
d : V(C, D) \overset{\cong}{\ra}
H^0(C, (\Isheaf_D^{-1}/\Isheaf_{D_\red}^{-1}) \otimes \Omega_C^1).
\end{align*}
\end{prop}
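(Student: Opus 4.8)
The plan is to reduce the statement to a local computation at each point of $|D|$ and then to a calculation with a uniformizer. First I would observe that all four sheaves occurring in the statement are torsion sheaves whose support is contained in the finite set $|D| = |D_\red|$ of closed points: away from $|D|$ one has $\Isheaf_D = \Isheaf_{D_\red} = \Osheaf_C$, so every quotient vanishes there. Consequently $H^0(C,-)$ of each of them is the direct sum of the stalks over the points $P \in |D|$, and it suffices to prove that $d$ is an isomorphism stalk by stalk. So I fix $P \in |D|$, write $n = n_P \ge 1$ for the multiplicity of $D$ at $P$, choose a local uniformizer $t$, and pass to the completed local ring, which in characteristic zero is $\kappa(P)[[t]]$ with $\kappa(P)/k$ finite separable; there $\Omega^1_C$ is free of rank one on $dt$, and the universal derivation satisfies $d(t^j) = j\, t^{j-1}\, dt$.

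Next I would write out the source and target of each map as $\kappa(P)$-vector spaces in terms of powers of $t$. Locally $\Isheaf_D = (t^n)$ and $\Isheaf_{D_\red} = (t)$, so for the first map the source $\Isheaf_{D_\red}/\Isheaf_D$ has basis $t, t^2, \dots, t^{n-1}$, while the target $(\Osheaf_C/\Isheaf_D\Isheaf_{D_\red}^{-1}) \otimes \Omega^1_C = (\kappa(P)[[t]]/(t^{n-1}))\, dt$ has basis $dt, t\,dt, \dots, t^{n-2}\,dt$; for the second map the source $\Isheaf_{D_\red}\Isheaf_D^{-1}/\Osheaf_C$ has basis $t^{1-n}, \dots, t^{-1}$, while the target $(\Isheaf_D^{-1}/\Isheaf_{D_\red}^{-1}) \otimes \Omega^1_C = ((t^{-n})/(t^{-1}))\,dt$ has basis $t^{-n}\,dt, \dots, t^{-2}\,dt$. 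That $d$ is well defined into these quotients is immediate from the same formula: for the first map $d$ carries $\Isheaf_{D_\red}$ into $\Osheaf_C\,\Omega^1_C$ and $\Isheaf_D$ into $\Isheaf_D\Isheaf_{D_\red}^{-1}\Omega^1_C$, and for the second it carries $\Isheaf_{D_\red}\Isheaf_D^{-1}$ into $\Isheaf_D^{-1}\Omega^1_C$ and $\Osheaf_C$ into $\Isheaf_{D_\red}^{-1}\Omega^1_C$, so in both cases $d$ factors through the displayed quotients.

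Finally, from $d(t^j) = j\, t^{j-1}\, dt$ the map carries the basis vector $t^j$ to $j$ times the corresponding target basis vector $t^{j-1}\,dt$. In the first case $j$ runs over $1, \dots, n-1$ and in the second over $1-n, \dots, -1$; in both ranges $j \ne 0$, so, the base field having characteristic zero, every coefficient $j$ is invertible and $d$ matches the two bases up to nonzero scalars, hence is an isomorphism. (Equivalently, without choosing a splitting $\kappa(P)\hookrightarrow \kappa(P)[[t]]$, one checks that $d$ is strictly compatible with the $t$-adic filtrations and induces multiplication by $j \ne 0$ on each one-dimensional graded piece, and therefore is an isomorphism.)

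The point I want to emphasise, and what I regard as the crux of the proposition rather than an obstacle, is that the precise shape of the ideals is exactly what makes this work. The quotients are engineered so that the source never contains the constant term $t^0$, which spans $\Ker d$, and the target never contains the residual term $t^{-1}\,dt$, which spans $\Coker d$ on Laurent tails. Excising precisely these two offending terms, together with the invertibility of the integers $j$ in characteristic zero, is what forces $d$ to be bijective; in positive characteristic the vanishing of $d(t^p)$ would break the argument, which is consistent with the standing hypothesis $k \subset \bbC$.
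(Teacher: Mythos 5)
Your proposal is correct and follows essentially the same route as the paper: reduce to a direct sum of stalks over the points of $|D|$ and verify that $d$ is bijective on each local piece (the paper states the local bijectivity as ``readily seen''; you supply the uniformizer computation $d(t^j)=j\,t^{j-1}\,dt$ with $j\neq 0$ that justifies it).
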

\begin{proof}
Write $D=\sum_{P \in |C|} n_P P$.
Then we have
\begin{align*}
&U(C, D) \cong \bigoplus_{P \in |D|} \mathfrak{m}_P/\mathfrak{m}_P^{n_P},
\\
&H^0(C, (\Osheaf/\Isheaf_D \Isheaf_{D_\red}^{-1}) \otimes \Omega_C^1)
\cong \bigoplus_{P \in |D|} \Omega^1_{C, P}/\mathfrak{m}_P^{n_P-1}\Omega^1_{C, P},
\end{align*}
where $\mathfrak{m}_P$ denotes the maximal ideal of
the local ring $\Osheaf_{C, P}$ of $C$ at $P$.
Thus the first statement follows from the bijectivity of
\[ d : \mathfrak{m}_P/\mathfrak{m}_P^{n_P} \to 
\Omega^1_{C, P}/\mathfrak{m}_P^{n_P-1}\Omega^1_{C, P},
\]
which is readily seen.
Similarly, we have
\begin{align*}
&V(C, D) \cong \bigoplus_{P \in |D|} \mathfrak{m}_P^{1-n_P}/\Osheaf_{C, P},
\\
&H^0(C, (\Isheaf_D^{-1}/\Isheaf_{D_\red}^{-1}) \otimes \Omega^1_C)
\cong \bigoplus_{P \in |D|} \mathfrak{m}_P^{-n_P} \Omega^1_{C, P}/\mathfrak{m}_P^{-1}\Omega^1_{C, P}.
\end{align*}
Thus the second statement follows from the bijectivity of
\[ d: \mathfrak{m}_P^{1-n_P}/\Osheaf_{C, P}
\to \mathfrak{m}_P^{-n_P}\Omega^1_{C, P}/\mathfrak{m}_P^{-1}\Omega^1_{C, P},
\]
which is readily seen.
\end{proof}

\begin{coro}\label{cor:dual-uv}
The two $k$-vector spaces $U(C, D)$ and $V(C, D)$ 
are canonically dual to each other.
\end{coro}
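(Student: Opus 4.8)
The plan is to realize the duality through the local residue pairing. Writing $D = \sum_P n_P P$, for $u \in U(C,D)$ and $v \in V(C,D)$ I would choose at each $P \in |D|$ local lifts $\tilde{u}_P \in \mathfrak{m}_P$ and $\tilde{v}_P \in \mathfrak{m}_P^{1-n_P}$ of the corresponding stalks and set
$$ \langle u, v\rangle := \sum_{P \in |D|} \mathrm{Tr}_{\kappa(P)/k}\,\mathrm{Res}_P(\tilde{u}_P\, d\tilde{v}_P), $$
where $\kappa(P)$ denotes the residue field at $P$. First I would check that this is independent of the chosen lifts: modifying $\tilde{u}_P$ by an element of $\mathfrak{m}_P^{n_P} = \Isheaf_{D,P}$ multiplies $d\tilde{v}_P \in \mathfrak{m}_P^{-n_P}\Omega^1_{C,P}$ into a regular form, whose residue vanishes, while modifying $\tilde{v}_P$ by an element of $\Osheaf_{C,P}$ changes $d\tilde{v}_P$ by a regular form, and $\tilde{u}_P \in \mathfrak{m}_P$ again kills the residue. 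Hence $\langle\cdot,\cdot\rangle$ descends to the quotients defining $U(C,D)$ and $V(C,D)$.

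Next I would exploit that the sheaves $\Isheaf_{D_\red}/\Isheaf_D$ and $\Isheaf_{D_\red}\Isheaf_D^{-1}/\Osheaf_C$ are skyscraper sheaves supported on $|D|$, so that, exactly as in the proof of \propositionref{prop:uv}, both $U(C,D)$ and $V(C,D)$ split canonically as direct sums of their stalks over $P \in |D|$. Since $\mathrm{Res}_P$ involves only the data at $P$, the pairing is block-diagonal for these decompositions, and perfectness of $\langle\cdot,\cdot\rangle$ reduces to perfectness of the local pairing $\mathfrak{m}_P/\mathfrak{m}_P^{n_P} \times \mathfrak{m}_P^{1-n_P}/\Osheaf_{C,P} \to k$ at each point.

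The local perfectness is a direct computation. Choosing a uniformizer $t$ at $P$, the source has $\kappa(P)$-basis $t, \dots, t^{n_P-1}$ and the target has $\kappa(P)$-basis $t^{-1}, \dots, t^{1-n_P}$; from $d(t^{-b}) = -b\, t^{-b-1}dt$ one gets $\mathrm{Res}_P(t^a\, d(t^{-b})) = -b\,\delta_{a,b}$, so the Gram matrix is diagonal with the nonzero integer entries $-1, \dots, -(n_P-1)$, which are invertible since $k$ has characteristic zero. Thus the pairing is a perfect $\kappa(P)$-bilinear pairing; as $\kappa(P)/k$ is separable, composing with the nondegenerate trace form yields a perfect $k$-bilinear pairing. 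Summing over $P$ produces the asserted perfect pairing, and it is canonical because residues and traces are intrinsic, independent of $t$ and of the lifts.

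I expect the only genuinely delicate points to be the bookkeeping in the well-definedness step, where the truncation orders of the two sheaves must match so that the residues of the boundary terms vanish, and the passage from $\kappa(P)$-perfectness to $k$-perfectness through the trace; the core nondegeneracy is the elementary monomial computation above. Alternatively, one could phrase the same pairing through the isomorphism $d : V(C,D) \cong H^0(C, (\Isheaf_D^{-1}/\Isheaf_{D_\red}^{-1})\otimes\Omega^1_C)$ of \propositionref{prop:uv}, viewing it as the residue pairing between the functions $U(C,D)$ and these meromorphic differentials, which makes the comparison with \propositionref{prop:uv} most transparent.
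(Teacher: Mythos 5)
Your argument is correct, but it takes a genuinely different route from the paper. The paper's proof is global and very short: using the exact sequences $0 \to \Isheaf_D \to \Isheaf_{D_\red} \to \Isheaf_{D_\red}/\Isheaf_D \to 0$ and its twist by $\Omega^1_C$, together with \propositionref{prop:uv}, it identifies $U(C,D)$ with $\Ker[H^1(C,\Isheaf_D) \to H^1(C,\Isheaf_{D_\red})]$ and $V(C,D)$ with $\Coker[H^0(C,\Isheaf_{D_\red}^{-1}\Omega^1_C) \to H^0(C,\Isheaf_D^{-1}\Omega^1_C)]$, and then invokes Serre duality, under which the kernel of a map is dual to the cokernel of the dual map. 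You instead build the pairing by hand as a sum of local residue pairings; since Serre duality on a curve is itself realized by residues, the two dualities agree, and your computation of the Gram matrix $(-b\,\delta_{a,b})$ is in effect an explicit local verification of the nondegeneracy that the paper outsources to Serre duality. Your well-definedness checks and the local monomial computation are all correct (the entries $-1,\dots,-(n_P-1)$ are invertible precisely because $\operatorname{char} k = 0$, which is where the hypothesis enters in your version), and the passage from $\kappa(P)$-perfectness to $k$-perfectness via the trace form is handled properly; the only point you gloss over is that viewing $\mathfrak{m}_P/\mathfrak{m}_P^{n_P}$ as a $\kappa(P)$-vector space with basis $t,\dots,t^{n_P-1}$ implicitly uses the (canonical, since $\kappa(P)/k$ is separable) coefficient field in the completed local ring, a standard point. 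What your approach buys is explicitness and locality --- in particular the block-diagonality over the points of $|D|$ and the visible compatibility with the local descriptions in \propositionref{prop:uv}, which would make the adjunction of $f^*$ and $f_*$ from \corollaryref{prop:pull-push-uv} under the pairing easy to check; what the paper's approach buys is brevity and the reuse of the same Serre-duality mechanism that reappears in \lemmaref{lem:dual-curve}.
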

\begin{proof}
We may suppose $D$ is (effective and) non-trivial.
Then we get
\[ U(C, D)=\ker[H^1(C, \Isheaf_D) \to H^1(C, \Isheaf_{D_\red})] \]
from an exact sequence
$0 \to \Isheaf_D \to \Isheaf_{D_\red} \to \Isheaf_{D_\red}/\Isheaf_D \to 0$.
On the other hand,
another exact sequence
$0 \to \Isheaf_{D_\red}^{-1} \otimes \Omega^1_C
\to \Isheaf_{D}^{-1} \otimes \Omega^1_C
\to (\Isheaf_D^{-1}/\Isheaf_{D_\red}^{-1}) \otimes \Omega^1_C \to 0$
and the above proposition yield
\[ V(C, D)=\Coker[H^0(C, \Isheaf_{D_\red}^{-1} \Omega^1_C) 
\to H^0(C, \Isheaf_D^{-1} \Omega^1_C)].
\]
Now the corollary follows from the Serre duality.
\end{proof}

\begin{coro}\label{prop:pull-push-uv}
Let $(C, D)$ and $(C', D')$ be pairs of
a smooth proper $k$-curves and an effective divisor.
Let $f : C \to C'$ be a finite $k$-morphism.
The canonical map
$\Osheaf_{C'} \to f_* \Osheaf_{C}$ 
and the trace map
$f_* \Omega^1_{C} \to \Omega^1_{C'}$
induce the following functoriality:
\begin{enumerate}
\item 
If $D \leqslant f^*D'$, then we have 
\begin{equation*}
 f^* :U(C', D') \to U(C, D), 
\qquad
 f_* :V(C, D) \to V(C', D').
\end{equation*}
\item 
If $D-D_\red \geqslant f^*(D'-D_\red')$
and $D_\red \geq (f^* D')_\red$, then we have 
\begin{equation*}
 f^* :V(C', D') \to V(C, D), 
\qquad
 f_* :U(C, D) \to U(C', D'). 
\end{equation*}
\end{enumerate}
\end{coro}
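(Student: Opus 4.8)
The plan is to deduce every one of the four maps from \propositionref{prop:funct-dif}, applied not only to the pair $(D, D')$ but also to the pair of reduced divisors $(D_\red, D'_\red)$. Since $f$ is finite the functor $f_*$ is exact, so it commutes with the quotients $\Isheaf_{D_\red}/\Isheaf_D$, $(\Isheaf_D^{-1}/\Isheaf_{D_\red}^{-1})\otimes\Omega^1_C$, and the like. Hence it suffices to produce, from the canonical and trace maps, two vertically compatible horizontal maps between the sheaves defining numerator and denominator; these then descend to the quotient sheaves, and taking $H^0$ (together with the identity $H^0(C, \sF) = H^0(C', f_*\sF)$ and the isomorphisms of \propositionref{prop:uv}) produces the asserted maps on $U$ and $V$.

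For part (1) the first task is to observe that the hypothesis $D \le f^* D'$ also forces $D_\red \le f^* D'_\red$: if $P \in |D|$ then $\mathrm{ord}_P(D) \le e_P\,\mathrm{ord}_{f(P)}(D')$ together with $\mathrm{ord}_P(D) \ge 1$ forces $f(P) \in |D'|$, whence $\mathrm{ord}_P(f^* D'_\red) = e_P \ge 1 = \mathrm{ord}_P(D_\red)$ (the inequality being trivial off $|D|$). Applying part (1) of \propositionref{prop:funct-dif} to $(D, D')$ and to $(D_\red, D'_\red)$, the canonical map $\Osheaf_{C'}\to f_*\Osheaf_C$ gives a commuting square relating $\Isheaf_{D'}\hookrightarrow\Isheaf_{D'_\red}$ to $f_*\Isheaf_D\hookrightarrow f_*\Isheaf_{D_\red}$, hence $\Isheaf_{D'_\red}/\Isheaf_{D'}\to f_*(\Isheaf_{D_\red}/\Isheaf_D)$ and so $f^*\colon U(C', D')\to U(C, D)$. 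Symmetrically, the trace map $f_*\Omega^1_C\to\Omega^1_{C'}$ gives compatible maps $f_*(\Isheaf_D^{-1}\Omega^1_C)\to\Isheaf_{D'}^{-1}\Omega^1_{C'}$ and $f_*(\Isheaf_{D_\red}^{-1}\Omega^1_C)\to\Isheaf_{D'_\red}^{-1}\Omega^1_{C'}$, hence a map of the quotients $(\Isheaf_D^{-1}/\Isheaf_{D_\red}^{-1})\otimes\Omega^1_C$, which via \propositionref{prop:uv} is $f_*\colon V(C, D)\to V(C', D')$.

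For part (2) the same recipe applies with part (2) of \propositionref{prop:funct-dif}, and the hypothesis transfer to $(D_\red, D'_\red)$ is immediate: $D_\red - (D_\red)_\red = 0 = f^*(D'_\red - (D'_\red)_\red)$, while $(D_\red)_\red = D_\red \ge (f^* D')_\red = (f^* D'_\red)_\red$, the last equality holding because both divisors are reduced with support $f^{-1}(|D'|)$. The canonical map $\Omega^1_{C'}\to f_*\Omega^1_C$ then yields $f^*\colon V(C', D')\to V(C, D)$ through the $(\Isheaf_D^{-1}/\Isheaf_{D_\red}^{-1})\otimes\Omega^1_C$ description, and the trace map $f_*\Osheaf_C\to\Osheaf_{C'}$ yields $f_*\colon U(C, D)\to U(C', D')$ through the $\Isheaf_{D_\red}/\Isheaf_D$ description.

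The only genuine point, and hence the main (if modest) obstacle, is this compatibility of hypotheses under passage to reduced divisors, which is what lets the maps on numerators and denominators fit into a commuting ladder and thus descend to the quotients; the rest is formal. As a consistency check one expects, via \corollaryref{cor:dual-uv}, that the two maps in each part are mutually transpose, which I would record afterwards rather than use in the construction.
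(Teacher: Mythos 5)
Your proposal is correct and follows essentially the same route as the paper: apply \propositionref{prop:funct-dif} to both $(D,D')$ and $(D_\red,D'_\red)$ after checking that the hypotheses pass to the reduced divisors (the chain $D_\red \le (f^*D')_\red \le f^*(D'_\red)$ in part (1)), descend to the quotient sheaves, and use \propositionref{prop:uv} to translate between the two descriptions of $U$ and $V$. The paper compresses all of this into one sentence, so your write-up is just a more explicit version of the same argument.
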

\begin{proof}
Since $D \le f^*D'$ implies
$D_\red \le (f^* D')_\red \le f^*(D_\red')$,
this follows from 
\propositionref{prop:funct-dif} and \ref{prop:uv}.
\end{proof}

\subsection{}\label{sect:def-MCrv}
Let us denote by $\oMCrv$ the following category. An object in $\oMCrv$ is a triplet $(X,Y,Z)$ where $X$ is smooth proper $k$-curve and $Y,Z$ are effective divisors on $X$ such that $|Y|\cap |Z|=\emptyset$. A morphism 
$$(X,Y,Z)\ra (X',Y',Z') $$
in $\oMCrv$, is a morphism $f:X\ra X'$ of $k$-varieties 
such that 
(1) $Y\leqslant f^*Y'$, (2) $Z - Z_\red \ge f^*(Z'-Z_\red')$,
and (3) $Z_\red \ge (f^* Z')_\red$
(equivalently, $f(X \setminus |Z|) \subset f(X' \setminus |Z'|)$).
It then follows from 
\propositionref{prop:funct-dif} that
the canonical map $\Osheaf_{X'} \to f_* \Osheaf_{X}$ induces morphisms of sheaves
\begin{equation}\label{eq:funct-1}
\Isheaf_{Y'} \to f_* \Isheaf_Y
\quad \text{and} \quad
\Isheaf_{Z'}^{-1} \Omega_{X'}^1 \to f_*(\Isheaf_Z^{-1} \Omega_{X}^1).
\end{equation}



It will be useful to consider also the following variant: 
$\uMCrv$ is the category with the same objects as $\oMCrv$ but this times a morphism 
$$(X,Y,Z)\ra (X',Y',Z') $$
in $\uMCrv_k$ is a morphism $f:X\ra X'$ of $k$-varieties such that 
(1) $Y - Y_\red \ge f^*(Y' - Y_\red')$,
(2) $Y_\red \ge (f^* Y')_\red$, and
(3) $Z \le f^*Z'$.
Again it then follows from 
\propositionref{prop:funct-dif} that
the trace map $f_* \Osheaf_{X} \to \Osheaf_{X'}$ 
induces morphisms of sheaves
\begin{equation}\label{eq:funct-2}
f_* \Isheaf_{Y'} \to \Isheaf_Y
\quad \text{and} \quad
f_* (\Isheaf_{Z'}^{-1} \Omega_{X'}^1) \to \Isheaf_Z^{-1} \Omega_{X}^1.
\end{equation}



\begin{defi}\label{DefidR}
Let $(X,Y,Z)$ be an object in the category $\oMCrv$.
We define 
$$\bH^1_\dR(X,Y,Z):=\bH^1(X,[\Isheaf_Y\ra \Isheaf_Z^{-1}\Omega^1_X])$$
to be the first hypercohomology group of 
the complex of $\Osheaf_X$-modules $[\Isheaf_Y\ra \Isheaf_Z^{-1}\Omega^1_X]$,
where $\Isheaf_Y$ is placed in degree zero. 
This is a finite dimensional $k$-vector space.
By \eqref{eq:funct-1}, we obtain a functor
\begin{equation}\label{eq:functors-bh1}
\bH^1_\dR:\oMCrv^\op\ra\mod(k),
\end{equation}
where $\mod(k)$ is the category of finite dimensional $k$-vector spaces.
We also have a functor
\begin{equation}\label{eq:functors-bh2}
{}^t{\bH}^1_\dR:\uMCrv \ra\mod(k)
\end{equation}
which takes the same value on objects as ${\bH}^1_\dR$
but acts on morphisms via \eqref{eq:funct-2}.
\end{defi}

\subsection{}
In the following,
see \propositionref{prop:uv} for the definition of $U(X, Y)$
and $V(X, Z)$.

\begin{prop}\label{CanDec}
For any $(X, Y, Z) \in \oMCrv$,
there is a canonical decomposition
\begin{equation}\label{eq:hdr-dec}
\bH^1_\dR(X,Y,Z)
\cong
\bH^1_\dR(X,Y_\red,Z_\red) \oplus
U(X, Y) \oplus V(X, Z).
%
\end{equation}
Moreover, 
the decomposition \eqref{eq:hdr-dec} is functorial with respect to maps in $\oMCrv$.
\end{prop}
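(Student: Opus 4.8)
The plan is to realize all three summands from a single square of two-term complexes.
Write $K(Y,Z):=[\Isheaf_Y\ra\Isheaf_Z^{-1}\Omega^1_X]$, so that $\bH^1_\dR(X,Y,Z)=\bH^1(X,K(Y,Z))$, and form the four complexes $K(Y,Z_\red),K(Y,Z),K(Y_\red,Z_\red),K(Y_\red,Z)$. Since $Y\ge Y_\red$ gives $\Isheaf_Y\subseteq\Isheaf_{Y_\red}$ in degree $0$, and $Z\ge Z_\red$ gives $\Isheaf_{Z_\red}^{-1}\Omega^1_X\subseteq\Isheaf_Z^{-1}\Omega^1_X$ in degree $1$, these four complexes form a commutative square of termwise inclusions with smallest term $K(Y,Z_\red)$ and largest term $K(Y_\red,Z)$. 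The two relevant subquotients are the complex $[\Isheaf_{Y_\red}/\Isheaf_Y\ra 0]$ (in degree $0$, a skyscraper on $|Y|$) and the complex $[0\ra(\Isheaf_Z^{-1}/\Isheaf_{Z_\red}^{-1})\otimes\Omega^1_X]$ (in degree $1$, a skyscraper on $|Z|$). The decisive structural input is that $|Y|\cap|Z|=\emptyset$, so these two loci never interact.

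\textbf{Identifying the graded pieces.}
First I would compute the hypercohomology of the two subquotient complexes. As both are skyscrapers on finitely many closed points, their $H^1$ vanishes; hence the $Y$-quotient has $\bH^0=H^0(X,\Isheaf_{Y_\red}/\Isheaf_Y)=U(X,Y)$ and the $Z$-quotient has $\bH^1=H^0(X,(\Isheaf_Z^{-1}/\Isheaf_{Z_\red}^{-1})\otimes\Omega^1_X)\cong V(X,Z)$, the last isomorphism being the second differential isomorphism of \propositionref{prop:uv}. Feeding the short exact sequence $0\to K(Y,Z_\red)\to K(Y,Z)\to[0\ra(\Isheaf_Z^{-1}/\Isheaf_{Z_\red}^{-1})\otimes\Omega^1_X]\to 0$ into the long exact sequence, and using that $\bH^0$ of a complex concentrated in degree $1$ vanishes, gives an injection $W_0:=\bH^1_\dR(X,Y,Z_\red)\hookrightarrow\bH^1_\dR(X,Y,Z)$ with cokernel a subspace of $V(X,Z)$. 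Dually, the sequence $0\to K(Y,Z_\red)\to K(Y_\red,Z_\red)\to[\Isheaf_{Y_\red}/\Isheaf_Y\ra 0]\to 0$ realizes $U(X,Y)$ as a subobject of $W_0$ with $W_0/U(X,Y)\cong\bH^1_\dR(X,Y_\red,Z_\red)$ (here one uses $H^0(\Isheaf_{Y_\red})=0$ on a proper connected curve when $Y\neq 0$, the case $Y=0$ being trivial). By naturality of connecting maps applied to the evident map from the latter sequence to $0\to K(Y,Z)\to K(Y_\red,Z)\to[\Isheaf_{Y_\red}/\Isheaf_Y\ra 0]\to 0$, this copy of $U(X,Y)$ lands inside $W_0$, producing the three-step filtration $U(X,Y)\subseteq W_0\subseteq\bH^1_\dR(X,Y,Z)$ with graded pieces $U(X,Y)$, $\bH^1_\dR(X,Y_\red,Z_\red)$, $V(X,Z)$.

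\textbf{The main obstacle.}
The hard part is twofold: showing the top connecting map $V(X,Z)\ra\bH^2(X,K(Y,Z_\red))$ vanishes (so that $\bH^1_\dR(X,Y,Z)/W_0$ is \emph{all} of $V(X,Z)$, not a proper subspace), and then splitting the filtration \emph{canonically}. I would prove the vanishing using the disjointness of supports together with \propositionref{prop:uv}: a class in $V(X,Z)$ is $dv$ for a unique $v\in H^0(\Isheaf_{Z_\red}\Isheaf_Z^{-1}/\Osheaf_X)$, a section supported on $|Z|$. Choosing a Leray cover adapted to $|Z|$ (one large open $X\setminus|Z|$ on which the lift is taken to be $0$, and small affine disks around the points of $|Z|$, which may be chosen to avoid $|Y|$), the resulting obstruction \v{C}ech cocycle is $d$ of an $\Osheaf_X$-valued cocycle supported near $|Z|$; since $Y$ is trivial there, this cocycle lies in $\Isheaf_Y$, so the obstruction is in the image of $d\colon H^1(\Isheaf_Y)\to H^1(\Isheaf_{Z_\red}^{-1}\Omega^1_X)$ and dies in $\bH^2(X,K(Y,Z_\red))$. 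The same differential isomorphisms of \propositionref{prop:uv} furnish the canonical splittings of both extensions, so the filtration splits into the asserted direct sum $\bH^1_\dR(X,Y_\red,Z_\red)\oplus U(X,Y)\oplus V(X,Z)$.

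\textbf{Functoriality.}
Finally I would check naturality in $\oMCrv$. Every inclusion in the square of complexes, and hence every short exact sequence above, is natural in $(X,Y,Z)$ via the maps \eqref{eq:funct-1}. For a morphism $f\colon(X,Y,Z)\ra(X',Y',Z')$ one first verifies that $f$ also defines a morphism $(X,Y_\red,Z_\red)\ra(X',Y'_\red,Z'_\red)$ in $\oMCrv$ (the conditions $Y_\red\le f^*Y'_\red$, $Z_\red\ge(f^*Z')_\red$ follow from those for $(X,Y,Z)$, the two middle conditions being vacuous on reduced divisors), so the middle summand is functorial. That the induced maps on the two outer summands coincide with the maps $f^*\colon U(X',Y')\to U(X,Y)$ and $f^*\colon V(X',Z')\to V(X,Z)$ of \propositionref{prop:pull-push-uv} is exactly the compatibility of the trace/pullback maps with $d$, which also guarantees that the canonical splittings constructed above are natural. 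This yields the functoriality of \eqref{eq:hdr-dec}.
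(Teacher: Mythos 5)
Your proposal is correct in outline and rests on the same two pillars as the paper's proof --- the differential isomorphisms of \propositionref{prop:uv} and the disjointness of $|Y|$ and $|Z|$ --- but it reaches the decomposition by a genuinely different route. You extract a two-step filtration of $\bH^1_\dR(X,Y,Z)$ from the long exact sequences of the square of termwise inclusions among $K(Y,Z_\red)$, $K(Y,Z)$, $K(Y_\red,Z_\red)$, $K(Y_\red,Z)$, identify the graded pieces, and then must separately (i) kill the connecting map into $\bH^2(X,K(Y,Z_\red))$ and (ii) split the filtration canonically. The paper handles both at once, in two symmetric steps ($Y$ first, then $Z$), by a device absent from your square: the auxiliary complex $[\Isheaf_{Y}\ra\Isheaf_{Y}\Isheaf_{Y_\red}^{-1}\Isheaf_{Z}^{-1}\Omega^1_X]$, which includes termwise into $[\Isheaf_{Y_\red}\ra\Isheaf_{Z}^{-1}\Omega^1_X]$ with quotient $[\Isheaf_{Y_\red}/\Isheaf_Y\xra{d}(\Osheaf_X/\Isheaf_Y\Isheaf_{Y_\red}^{-1})\otimes\Omega^1_X]$, acyclic by \propositionref{prop:uv} (here $|Y|\cap|Z|=\emptyset$ is used to discard $\Isheaf_Z^{-1}$ near $|Y|$), and which also includes into $[\Isheaf_Y\ra\Isheaf_Z^{-1}\Omega^1_X]$. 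Inverting the quasi-isomorphism gives an explicit section $a$ of the natural map $b:\bH^1_\dR(X,Y,Z)\ra\bH^1_\dR(X,Y_\red,Z)$ with $b\circ a=\id$ and $\ker(b)\cong U(X,Y)$; the splitting, the surjectivity onto the graded piece, and the functoriality all come for free, and the $V(X,Z)$ factor is treated symmetrically.

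The one step you should not leave as an assertion is the sentence claiming that ``the same differential isomorphisms furnish the canonical splittings of both extensions'': the four complexes of your square do not by themselves produce sections of the two surjections, and the canonical splitting is the actual content of the proposition --- a filtration with the stated graded pieces is strictly weaker. Your \v{C}ech argument for the vanishing of the connecting map does contain the right local mechanism, and it could be upgraded to define the section $V(X,Z)\ra\bH^1_\dR(X,Y,Z)$, $\bar v\mapsto[(dv_i,\,v_j-v_i)]$, but you would then owe a verification that the class is independent of the cover and the lifts and that it is functorial; the auxiliary-complex formulation does this bookkeeping automatically. Everything else --- the identification of the subquotients, the use of $H^0(X,\Isheaf_{Y_\red})=0$ for non-empty $Y$, and the check that a morphism in $\oMCrv$ induces one on the reduced triples --- is sound and consistent with the paper.
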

\begin{proof}
Since $U(C, D_\red)=V(C, D_\red)=0$ for 
a pair of a smooth proper $k$-curve $C$ and an effective divisor $D$,
we are reduced to showing
\[ 
\bH^1_\dR(X,Y,Z)
\cong
\bH^1_\dR(X,Y_\red,Z) \oplus U(X, Y)
\cong
\bH^1_\dR(X,Y,Z_\red) \oplus V(X, Z).
\]
To show the first isomorphism, 
we construct canonical maps
\[ a : \bH^1_\dR(X,Y_\red,Z) \to \bH^1_\dR(X,Y,Z),
\quad
   b : \bH^1_\dR(X,Y,Z) \to \bH^1_\dR(X,Y_\red,Z)
\]
such that $b \circ a = \id$ and $\ker(b) \cong U(X, Y)$.
For this we first note that
the map 
\[
[\Isheaf_{Y} \ra \Isheaf_{Y}\Isheaf_{Y_\red}^{-1}\Isheaf_{Z}^{-1}\Omega^1_X]
\to
[\Isheaf_{Y_\red} \ra \Isheaf_{Z}^{-1}\Omega^1_X]
\]
(induced by the inclusions
$\Isheaf_{Y} \subset \Isheaf_{Y_\red}$
and 
$\Isheaf_{Y}\Isheaf_{Y_\red}^{-1}\Isheaf_{Z}^{-1}
\subset \Isheaf_{Z}^{-1}$)
is a quasi-isomorphism
by \propositionref{prop:uv}.
Using this, we define $a$ to be  the composition
\begin{align*}
\bH^1_\dR(X,Y_\red,Z) 
&=\bH^1(X,[ \Isheaf_{Y_\red} \ra \Isheaf_{Z}^{-1}\Omega^1_X])
\\
&\overset{\cong}{\leftarrow}
\bH^1(X,[ \Isheaf_{Y} \ra \Isheaf_{Y}\Isheaf_{Y_\red}^{-1}
\Isheaf_{Z}^{-1}\Omega^1_X])
\\
&\to
\bH^1(X,[ \Isheaf_{Y} \ra \Isheaf_{Z}^{-1}\Omega^1_X])
= \bH^1_\dR(X,Y,Z),
\end{align*}
where the second map is induced by the inclusion
$
\Isheaf_{Y}\Isheaf_{Y_\red}^{-1} \Isheaf_{Z}^{-1}
\subset \Isheaf_Z^{-1}$.
Next, $b$ is given by 
\begin{align*}
\bH^1_\dR(X,Y,Z)
&=\bH^1(X,[ \Isheaf_{Y} \ra \Isheaf_{Z}^{-1}\Omega^1_X])
\\
&\to
\bH^1(X,[ \Isheaf_{Y_\red} \ra \Isheaf_{Z}^{-1}\Omega^1_X])
= 
\bH^1_\dR(X,Y_\red,Z)
\end{align*}
which is induced by the inclusion
$\Isheaf_{Y} \subset \Isheaf_{Y_\red}$.
It is obvious that the composition $b \circ a$ is the identity.
It is also clear from this construction that $\ker(b) \cong U(X, Y)$.
Note also that \propositionref{prop:uv} tells us that
$\Coker(a) \cong U(X, Y)$,
as it should be.

The second isomorphism
$\bH^1_\dR(X,Y,Z)
\cong
\bH^1_\dR(X,Y,Z_\red) \oplus V(X, Z)$
is constructed in a similar way.
We omit it.
\end{proof}

\begin{prop}\label{prop:dual-hdr}
For any $(X, Y, Z) \in \oMCrv$,
the two $k$-vector spaces $\bH^1_\dR(X,Y,Z)$ and $\bH^1_\dR(X,Z,Y)$
are canonically dual to each other.
\end{prop}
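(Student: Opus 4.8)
The plan is to exhibit an explicit cup-product pairing
$$\bH^1_\dR(X,Y,Z)\otimes_k\bH^1_\dR(X,Z,Y)\ra k$$
and to prove that it is perfect by dévissage along the Hodge (stupid) filtration, the perfectness on the graded pieces being ordinary Serre duality on $X$. We may assume $X$ connected. Write $\sF^\bullet=[\Isheaf_Y\ra\Isheaf_Z^{-1}\Omega^1_X]$ and $\sG^\bullet=[\Isheaf_Z\ra\Isheaf_Y^{-1}\Omega^1_X]$, with the degree-zero term on the left, so that $\bH^1_\dR(X,Y,Z)=\bH^1(X,\sF^\bullet)$ and $\bH^1_\dR(X,Z,Y)=\bH^1(X,\sG^\bullet)$.

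First I would construct the pairing. Since $|Y|\cap|Z|=\emptyset$ one has $\Isheaf_Y\Isheaf_Z=\Isheaf_{Y+Z}$, and multiplication of sheaves defines, degree by degree, a morphism of complexes $\sF^\bullet\otimes_{\Osheaf_X}\sG^\bullet\ra[\Osheaf_X\ra\Omega^1_X]$ into the de Rham complex of $X$: in degree $0$ via $\Isheaf_Y\otimes\Isheaf_Z\ra\Osheaf_X$, in degree $1$ via the two products $\Isheaf_Z^{-1}\Omega^1_X\otimes\Isheaf_Z\ra\Omega^1_X$ and $\Isheaf_Y\otimes\Isheaf_Y^{-1}\Omega^1_X\ra\Omega^1_X$, while in degree $2$ the target vanishes because $\Omega^2_X=0$. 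The sole compatibility with differentials to be checked is in degrees $0\ra1$, where it is exactly the Leibniz rule $d(fg)=g\,df+f\,dg$. Passing to hypercohomology and composing with the trace isomorphism $\bH^2(X,[\Osheaf_X\ra\Omega^1_X])=H^2_\dR(X/k)\overset{\cong}{\ra}k$ yields the pairing; all its ingredients being canonical, so is the eventual isomorphism. It remains to prove perfectness.

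Next I would filter. The stupid-filtration short exact sequences $0\ra\Isheaf_Z^{-1}\Omega^1_X[-1]\ra\sF^\bullet\ra\Isheaf_Y\ra0$ and $0\ra\Isheaf_Y^{-1}\Omega^1_X[-1]\ra\sG^\bullet\ra\Isheaf_Z\ra0$ give, through their long exact sequences, two-step filtrations $\mathrm{Fil}^1\subseteq\bH^1(X,\sF^\bullet)$ and $\mathrm{Fil}^1\subseteq\bH^1(X,\sG^\bullet)$ with
$$\gr^1\bH^1(X,\sF^\bullet)\cong\Coker[H^0(\Isheaf_Y)\overset{d}{\ra}H^0(\Isheaf_Z^{-1}\Omega^1_X)],$$
$$\gr^0\bH^1(X,\sF^\bullet)\cong\Ker[H^1(\Isheaf_Y)\overset{d}{\ra}H^1(\Isheaf_Z^{-1}\Omega^1_X)],$$
and symmetrically for $\sG^\bullet$. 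Because $\sigma^{\ge1}\sF^\bullet\otimes\sigma^{\ge1}\sG^\bullet$ lives in degrees $\ge2$, it maps to $0$ in $[\Osheaf_X\ra\Omega^1_X]$; hence $\mathrm{Fil}^1$ pairs with $\mathrm{Fil}^1$ trivially and the pairing descends to the associated graded. The crux is then that the two induced graded pairings are perfect. They are the Serre-duality pairings $H^0(\Isheaf_Z^{-1}\Omega^1_X)\times H^1(\Isheaf_Z)\ra k$ and $H^1(\Isheaf_Y)\times H^0(\Isheaf_Y^{-1}\Omega^1_X)\ra k$ descended along the connecting differentials, and the key claim is that $d_1:H^0(\Isheaf_Y)\ra H^0(\Isheaf_Z^{-1}\Omega^1_X)$ and $d_2:H^1(\Isheaf_Z)\ra H^1(\Isheaf_Y^{-1}\Omega^1_X)$ are mutually transpose under Serre duality. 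This is an integration-by-parts statement: the Leibniz rule gives $(d_1f)\cup\xi+f\cup(d_2\xi)=d(f\cup\xi)$ in $H^1(\Omega^1_X)$, where $f\cup\xi\in H^1(\Isheaf_{Y+Z})$, and the trace of the right-hand side vanishes because $d:H^1(\Isheaf_{Y+Z})\ra H^1(\Omega^1_X)$ factors through $H^1(\Osheaf_X)\overset{d}{\ra}H^1(\Omega^1_X)$, whose cokernel is $H^2_\dR(X/k)$ and on which the trace is therefore zero. Thus $\Img(d_1)^\perp=\Ker(d_2)$, so the descended pairing $\Coker(d_1)\times\Ker(d_2)\ra k$ is perfect; the symmetric computation handles the other graded piece.

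Since both graded pairings are perfect and the two filtrations are opposite two-step filtrations, a standard five-lemma argument shows the total pairing is perfect, giving the asserted canonical duality $\bH^1_\dR(X,Z,Y)\cong\bH^1_\dR(X,Y,Z)^*$. I expect the genuine obstacle to be the third step, namely verifying that the pairing induced on the associated graded really is the Serre pairing and that the connecting differentials are Serre-transpose, i.e. the compatibility of the cup product, the de Rham trace on $H^2_\dR(X/k)$, and coherent Serre duality; the filtration bookkeeping is routine. Alternatively, one could reduce to the case of reduced $Y$ and $Z$ using the canonical decomposition of \propositionref{CanDec} together with \corollaryref{cor:dual-uv}, but the direct cup-product argument seems cleaner and avoids treating the reduced case separately.
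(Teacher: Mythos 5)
Your proposal is correct and is essentially the argument the paper gives: the paper packages your cup-product pairing into $\Omega_X^\bullet$ and the reduction to coherent Serre duality along the stupid filtration as \lemmaref{lem:dual-curve}, applied with $C^*=[\Isheaf_Y\to\Isheaf_Z^{-1}\Omega^1_X]$ and $D^*=[\Isheaf_Z\to\Isheaf_Y^{-1}\Omega^1_X]$, the hypotheses $C^0\cong\underline{\Hom}(D^1,\Omega^1_X)$ and $C^1\cong\underline{\Hom}(D^0,\Omega^1_X)$ being exactly your multiplication maps. You spell out the details (Leibniz compatibility, transposition of the connecting maps, the two-step filtration argument) that the paper leaves to the reader, and your chosen route is the direct one rather than the alternative via \propositionref{CanDec} and \corollaryref{cor:dual-uv} that you mention at the end.
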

\begin{proof}
Apply \lemmaref{lem:dual-curve} with
$C^*=[\Isheaf_Y \to \Isheaf_Z^{-1} \Omega^1_X]$
and
$D^*=[\Isheaf_Z \to \Isheaf_Y^{-1} \Omega^1_X]$.
\end{proof}

%
%

\begin{lemm}\label{lem:dual-curve}
Let $C^*$ and $D^*$ be two complexes of sheaves of $k$-vector spaces
on $X$ such that
$C^i$ and $D^i$ are locally free $\Osheaf_X$-modules for all $i$
and that $C^i=D^i=0$ unless $i \not\in \{ 0, 1 \}$.
Let $\wedge : \mathrm{Tot}(C^* \otimes_k D^*) \to \Omega_X^\bullet$ 
be a map of complexes and suppose that it induces
$C^0 \cong \underline{\Hom}_{\Osheaf_X}(D^1, \Omega^1_X)$ and
$C^1 \cong \underline{\Hom}_{\Osheaf_X}(D^0, \Omega^1_X)$.
Then $\wedge$ induces a perfect duality between
$\bH^i(X, C^*)$ and $\bH^{2-i}(X, D^*)$ for all $i$.
\end{lemm}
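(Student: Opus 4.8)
The plan is to reduce the statement to classical Serre duality on the smooth proper curve $X$ by means of the two hypercohomology long exact sequences coming from the stupid filtrations of $C^*$ and $D^*$, and to finish with the five lemma.

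First I would unwind the hypotheses at the level of the terms. The degree-one component of $\wedge$ restricts to two pairings $C^0 \otimes_k D^1 \to \Omega^1_X$ and $C^1 \otimes_k D^0 \to \Omega^1_X$, which by assumption induce isomorphisms $C^0 \cong \underline{\Hom}_{\Osheaf_X}(D^1, \Omega^1_X)$ and $C^1 \cong \underline{\Hom}_{\Osheaf_X}(D^0, \Omega^1_X)$; as all sheaves in sight are locally free of finite rank, dualizing again gives $D^1 \cong \underline{\Hom}_{\Osheaf_X}(C^0, \Omega^1_X)$ and $D^0 \cong \underline{\Hom}_{\Osheaf_X}(C^1, \Omega^1_X)$. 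Composing the cup product with these pairings and the trace isomorphism $H^1(X, \Omega^1_X) \cong k$ (this is where properness and smoothness of $X$ enter), classical Serre duality for the locally free sheaves $C^p$ then yields perfect pairings $H^q(X, C^p) \otimes_k H^{1-q}(X, D^{1-p}) \to k$ for all $p \in \{0,1\}$ and all $q$; equivalently, isomorphisms $H^q(X, C^p) \cong H^{1-q}(X, D^{1-p})^*$.

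Next I would write down, for the stupid filtration $0 \to C^1[-1] \to C^* \to C^0 \to 0$, the long exact sequence
\[
\cdots \to H^{i-1}(X,C^1) \to \bH^i(X,C^*) \to H^i(X,C^0) \xrightarrow{\partial_C} H^i(X,C^1) \to \cdots,
\]
with $\partial_C$ induced by the differential $C^0 \to C^1$, and the analogous sequence for $D^*$. By functoriality of the cup product the pairing $\wedge$ induces a morphism from the $C$-sequence to the $k$-linear dual of the $D$-sequence, arranged (by a shift) so that $\bH^i(X,C^*)$ is matched with $\bH^{2-i}(X,D^*)^*$; on the outer terms $H^q(X,C^p)$ this morphism is precisely the Serre duality isomorphism of the previous paragraph, hence an isomorphism there.

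The one non-formal point, which I expect to be the main obstacle, is the commutativity of the squares involving the connecting maps: one must check that $\partial_C$ corresponds to the transpose $\partial_D^{*}$ under the identifications $H^i(X,C^0) \cong H^{1-i}(X,D^1)^*$ and $H^i(X,C^1) \cong H^{1-i}(X,D^0)^*$. This is exactly where the hypothesis that $\wedge$ is a morphism of complexes is used: the compatibility of $\wedge$ with the differentials of $C^*$ and $D^*$ and with the de Rham differential of $\Omega_X^\bullet$ becomes, after passing to cohomology and applying the trace, the adjunction relation $\langle \partial_C x, y \rangle = \pm \langle x, \partial_D y \rangle$. Here one also identifies the target $\bH^2(X, \Omega_X^\bullet) = H^2_{\dR}(X)$ of the top cup product with $H^1(X, \Omega^1_X)$ via the (degenerate) Hodge--de Rham edge map, so that the $\bH$-level pairing is compatible with the term-level Serre pairings. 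Granting this diagram chase, with the usual care for signs, the five lemma shows that the middle map $\bH^i(X,C^*) \to \bH^{2-i}(X,D^*)^*$ is an isomorphism for every $i$, which is the asserted perfect duality.
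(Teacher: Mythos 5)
Your proposal is correct and follows exactly the route of the paper's own (very terse) proof: reduce to Serre duality term by term via the long exact sequences of the stupid filtrations of $C^*$ and $D^*$, and conclude by the five lemma. The only thing you add beyond the paper is the explicit verification of the commutativity of the squares involving the connecting maps (using that $\wedge$ is a morphism of complexes together with the vanishing of $H^1(X,\Osheaf_X)\to H^1(X,\Omega^1_X)$), which the paper leaves implicit and which you handle correctly.
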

\begin{proof}
This is reduced to the Serre duality
by an exact sequence
\begin{align*}
\dots 
\to H^{i-1}(X, C^1) 
\to \bH^{i}(X, C^*) 
\to H^{i}(X, C^0) 
\to H^{i}(X, C^1) \to\dots
\end{align*}
and a similar sequence for $D^*$.
\end{proof}

\subsection{}\label{sect:DefMotivesWithModulus}
The following definition introduces our main object of studies.


\begin{defi} Let $k$ be a number field. The category $\ECMM_{1}$ of effective cohomological isomotives of curves with modulus is the $\bbQ$-linear category associated with the representation
$$\bH^1_\dR:\oMCrv^\op\ra\mod(\bbQ).$$
\end{defi}

By construction the representation $\bH^1_\dR$ has a factorization 
$$\oMCrv^\op\xra{\overline{\bH}^1_\dR}\ECMM_{1}\xra{F^a_{\dR}}\mod(\bbQ) $$
into a representation $\overline{\bH}^1_\dR$ 
and a $\bbQ$-linear faithful exact functor $F^a_{\dR}$.


\subsection{}\label{sect:EHM-ECM} 
Let $\Crv$ be the category defined as follows (see \cite[\S5.1]{MR3302623}). An object is a pair $(C, Y)$ where $C$ is a smooth affine curve and $Y\subseteq C$ is a closed subset consisting of finitely many closed points.
A morphism $(C, Y) \to (C', Y')$ is given by 
a $k$-morphism $f : C \to C'$ such that $f(Y) \subset Y'$.

Recall that by definition (see \cite[\S5.1]{MR3302623}) 
the $\bbQ$-linear Abelian category $\EHM_1$ of effective homological isomotives of curves
\footnote{Note that in \cite{MR3302623} the category $\EHM_1 $ is denoted by ${\EHM''_1}  $ while $\EHM_1$ stands for the  the thick Abelian subcategory of Nori's category of effective cohomological isomotives generated by the first cohomology motive of pairs. Both categories are equivalent by \cite[Theorem 5.2, Theorem 6.1]{MR3302623}.} 
is the universal category associated with the representation
\begin{align}\label{eq:betti-hom}
H_1^{\Betti} : \Crv_k & \ra \mod(\bbQ)\\
(C,Y) & \mapsto  H^{\Betti}_1(C,Y)\otimes_\bbZ \bbQ
\end{align}
where $H^{\Betti}_1(C,Y)$ is the Betti homology of the pair $(C,Y)$
(with integral coefficients).
Let us denote by $\ECM_1$ the universal category associated with the representation
\begin{align*}
H^1_{\Betti} :\Crv_k^\op & \ra \mod(\bbQ)\\
(C,Y) & \mapsto  H^1_\Betti(C,Y)\otimes_\bbZ \bbQ
\end{align*}
where  $H_{\Betti}^1(C,Y)$ is the Betti cohomology of the pair $(C,Y)$. 
The $\bbQ$-linear dual functor $\mod(\bbQ)^\op \to \mod(\bbQ)$
induces an equivalence
\begin{equation}\label{eq:anti-eq-ehm-ecm}
(\EHM_1)^\op \to \ECM_1.
\end{equation}

\subsection{}\label{sect:deRhamMotives} In this work, it will be convenient to define effective cohomological motives of curves using algebraic de Rham cohomology instead of Betti cohomology. For this we assume that $k$ is a number field and consider the representation
\begin{align}
H^1_\dR:\Crv^\op & \ra \mod(\bbQ) \label{dRRep}\\
(C,Y) & \mapsto  H^1_\dR(C,Y):=\bH_\dR^1(\overline{C},Y,C_\infty) \notag
\end{align}
where $\overline{C}$ is the smooth compactification of $C$, 
$C_\infty=\overline{C}\setminus C$ is the set of points at infinity 
and $ \bH_\dR^1(\overline{C},Y,C_\infty)$ is defined as in \definitionref{DefidR} with both $Y$, $C_\infty$ viewed as closed reduced subschemes of $\overline{C}$.
Let us denote by $\ECM_1^\dR$ the $\bbQ$-linear Abelian category associated with the representation $H^1_\dR$ in \eqref{dRRep}.
By construction the representation $H^1_\dR$ has a factorization 
$$\Crv^\op\xra{\overline{H}^1_\dR} \ECM_1^\dR \xra{F_{\dR}}\mod(\bbQ) $$
into a representation $\overline{H}^1_\dR$ and a $\bbQ$-linear faithful exact functor $F_{\dR}$. Note that, by the universal property, the functor $F_\dR$ factorizes in $\mod(k)$ via the forgetful functor

\begin{lemm}\label{IsoPer}
There is a canonical isomorphism of functors 
$H^1_\dR\otimes_k \bbC\xra{\sim} H^1_{\Betti, \bbC}$ on  the category $\Crv$.
\end{lemm}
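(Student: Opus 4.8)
The plan is to recognize $\bH^1_\dR(\overline{C}, Y, C_\infty)$ as the relative algebraic de Rham cohomology of the open pair $(C, Y)$ and then to invoke Grothendieck's comparison theorem. Since $Y$ and $C_\infty$ are \emph{reduced} effective divisors on the smooth curve $\overline{C}$, we have $\Isheaf_Y = \Osheaf_{\overline{C}}(-Y)$ and $\Isheaf_{C_\infty}^{-1}\Omega^1_{\overline{C}} = \Omega^1_{\overline{C}}(\log C_\infty)$, the sheaf of $1$-forms with at worst logarithmic (simple) poles along $C_\infty$. The short exact sequence of complexes
$$0 \to [\Isheaf_Y \to \Omega^1_{\overline{C}}(\log C_\infty)] \to [\Osheaf_{\overline{C}} \to \Omega^1_{\overline{C}}(\log C_\infty)] \to [\Osheaf_{\overline{C}}/\Isheaf_Y \to 0] \to 0,$$
where $\Osheaf_{\overline{C}}/\Isheaf_Y$ is the (skyscraper) structure sheaf of the reduced finite scheme $Y$, induces a long exact hypercohomology sequence. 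The middle term computes $H^\bullet_\dR(C)$ for $C = \overline{C}\setminus C_\infty$, while $\bH^\bullet([\Osheaf_{\overline{C}}/\Isheaf_Y \to 0]) = H^0(Y, \Osheaf_Y)$ is concentrated in degree $0$; hence this sequence has exactly the shape of the long exact sequence of the pair, identifying $\bH^i_\dR(\overline{C}, Y, C_\infty)$ with the relative de Rham cohomology $H^i_\dR(C, Y)$.

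Next I would apply the algebraic de Rham comparison. By Grothendieck's theorem in Deligne's logarithmic formulation, the middle complex yields a canonical isomorphism $H^\bullet_\dR(C)\otimes_k\bbC \cong H^\bullet(C^\an; \bbC)$, natural in $C$; over the finite set $Y$ one has the tautological comparison $H^0(Y, \Osheaf_Y)\otimes_k\bbC \cong H^0(Y^\an; \bbC)$, both sides being $\bbC^{|Y|}$. These assemble into a morphism from the ($\bbC$-base-changed) de Rham long exact sequence to the singular long exact sequence of the pair $(C^\an, Y^\an)$, and a five lemma argument yields $H^1_\dR(C, Y)\otimes_k\bbC \xra{\sim} H^1_{\Betti, \bbC}(C, Y)$.

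Finally, for functoriality on $\Crv$, a morphism $(C, Y)\to(C', Y')$ is a $k$-morphism $f : C\to C'$ with $f(Y)\subset Y'$; it extends to $\overline{f}: \overline{C}\to\overline{C'}$ carrying $C_\infty$ into $C'_\infty$, and the induced pullbacks on the logarithmic de Rham complex and on the structure sheaves of $Y$ and of the boundary are precisely the maps defining the functoriality of $\bH^1_\dR$ recorded in \eqref{eq:funct-1}. Since both Grothendieck's comparison for $C$ and the connecting maps of the two pair sequences are natural in $f$, the isomorphism upgrades to an isomorphism of functors. The main obstacle will be the bookkeeping of this naturality: one must check that the arrow $\bH^1_\dR$ assigns to $f$ via \eqref{eq:funct-1} matches the topological pullback $f^*$ on relative singular cohomology under the comparison, and keep track of the $2\pi i$ normalization so that the resulting isomorphism is genuinely canonical. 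Once the identification of the first paragraph is arranged compatibly with $f$, this reduces to the classical naturality of Grothendieck's comparison theorem.
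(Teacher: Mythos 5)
Your proof is correct and follows essentially the same route as the paper's: both rest on the short exact sequence of complexes $0 \to [\Isheaf_Y \to \Isheaf_{C_\infty}^{-1}\Omega^1_{\overline{C}}] \to [\Osheaf_{\overline{C}} \to \Isheaf_{C_\infty}^{-1}\Omega^1_{\overline{C}}] \to [\Osheaf_{\overline{C}}/\Isheaf_Y \to 0] \to 0$ together with the logarithmic de Rham comparison for the open curve $C$ and the tautological comparison over the finite scheme $Y$. The only cosmetic difference is that the paper first applies GAGA on the proper curve $\overline{C}$ and then identifies the middle and right analytic complexes with $j_*\bbC_{C^{\an}}$ and $i_*\bbC_{Y^{\an}}$ by quasi-isomorphisms, so the relative comparison drops out of the cone description of $H^*_{\Betti}(C,Y)$, whereas you glue the degree-wise comparisons with a five-lemma argument on the two long exact sequences.
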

\begin{proof}
%
For a $k$-variety $V$ we write $V^\an$ 
for the complex analytic variety associated with $V$.
Let $(C,Y)$ in $\Crv$ and let $\Isheaf$, $\Jsheaf$ be the ideals of $Y^\an$ and $C_\infty^\an$ in $\Osheaf_{\overline{C}^{\raisebox{-2pt}{{\tiny an}}}}$.
The canonical map
\begin{equation}\label{IsoA}
\bH_\dR^1(\overline{C},Y,C_\infty)\otimes_k \bbC\ra  \bH^1(\overline{C}^{\raisebox{-2pt}{{\tiny an}}},[\Isheaf\ra\Jsheaf^{-1}\Omega^1_{\overline{C}^{\raisebox{-2pt}{{\tiny an}}}}])
\end{equation}
is an isomorphism of $\bbC$-vector spaces by GAGA. 
On the other hand, 
we have canonical quasi-isomorphisms 
\[
j_*\bbC_{C^\an} \cong
[\Osheaf_{\overline{C}^{\raisebox{-2pt}{{\tiny an}}}}
\ra\Jsheaf^{-1}\Omega^1_{\overline{C}^{\raisebox{-2pt}{{\tiny an}}}}],
\qquad
i_* \bbC_{Y^\an} \cong
[\Osheaf_{\overline{C}^{\raisebox{-2pt}{{\tiny an}}}}/\Isheaf\ra 0],
\]
where 
$j : C^\an \to \overline{C}^{\raisebox{-2pt}{{\tiny an}}}$ 
and 
$i : Y^\an \to \overline{C}^{\raisebox{-2pt}{{\tiny an}}}$ 
are immersions and
$\bbC_{C^\an}$ 
(resp. $\bbC_{Y^\an}$) denotes the constant sheaf on 
$C^\an$ (resp. $Y^\an$).
There is an exact sequence of complexes
\[
0 \to 
[\Isheaf\ra\Jsheaf^{-1}\Omega^1_{\overline{C}^{\raisebox{-2pt}{{\tiny an}}}}]
\to
[\Osheaf_{\overline{C}^{\raisebox{-2pt}{{\tiny an}}}}\ra \Jsheaf^{-1}\Omega^1_{\overline{C}^{\raisebox{-2pt}{{\tiny an}}}}]
\to
[\Osheaf_{\overline{C}^{\raisebox{-2pt}{{\tiny an}}}}/\Isheaf\ra 0]
\to 0.
\]
Hence the lemma follows from 
the fact that
$H^i_\Betti(C,Y)\otimes_\bbZ\bbC$
is computed as the hypercohomology of
the cone of $j_* \bbC_{C^\an} \to i_* \bbC_{Y^\an}$ with degree shifted by one.
\end{proof}

\begin{prop}\label{prop:equiv-betti-dr}
Let $k$ be a number field. The categories $\ECM_1$ and $\ECM_1^\dR$ are equivalent.
\end{prop}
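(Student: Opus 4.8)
My plan is to realise $\ECM_1^\dR$ inside the Betti category $\ECM_1$ and to recognise the resulting comparison functor as an equivalence by means of the criterion of \propositionref{AyoubBV}. Both categories are universal for representations of the \emph{one} quiver $\Crv^\op$ into $\mod(\bbQ)$, so once I dispose of a $\bbQ$-linear faithful exact functor $G_\dR:\ECM_1\ra\mod(\bbQ)$ together with an isomorphism of representations $G_\dR\circ\overline{H}^1_\Betti\cong H^1_\dR$, the universal property of \theoremref{NoriLength} applied to $H^1_\dR$ produces a functor $\Psi:\ECM_1^\dR\ra\ECM_1$. I would then invoke \propositionref{AyoubBV} with $T=H^1_\dR$, $\sB=\ECM_1$, $S=\overline{H}^1_\Betti$ and $G=G_\dR$: conditions \enumref{CondA} and \enumref{CondB} concern only the pair $(\sB,S)$ and are the structural properties of $\ECM_1$ and of $\overline{H}^1_\Betti$ (additivity under disjoint unions of curves, and generation by the $\overline{H}^1_\Betti(C,Y)$) underlying the Betti comparison of \pararef{paraABV}, while condition \enumref{CondC} is the one genuinely involving the de Rham structure. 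Granting all three, $\Psi$ is an equivalence.

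The second fiber functor $G_\dR$ is where \lemmaref{IsoPer} enters. Using the Ayoub--Barbieri-Viale identification of $\ECM_1$ with a category of $1$-(iso)motives recalled in \pararef{paraABV}, I would take $G_\dR$ to be the de Rham realization of $1$-motives followed by the forgetful functor to $\mod(\bbQ)$; this takes values in finite-dimensional $k$-vector spaces and is faithful and exact. The required compatibility $G_\dR\circ\overline{H}^1_\Betti\cong H^1_\dR$ amounts to identifying the de Rham realization of the $1$-motive attached to $(C,Y)$ with the algebraic de Rham cohomology $\bH^1_\dR(\overline{C},Y,C_\infty)$, and this is pinned down functorially by \lemmaref{IsoPer} once both sides are compared through their common extension of scalars to $\bbC$ along $k\hookrightarrow\bbC$, where they become the Betti cohomology. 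Faithfulness and exactness of $G_\dR$ may likewise be tested after the faithful exact base change $-\otimes_k\bbC$, under which $G_\dR$ turns into the Betti fiber functor by \lemmaref{IsoPer}.

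The main obstacle is condition \enumref{CondC} for the de Rham functor, i.e.\ that for every morphism $\overline{H}^1_\Betti(C,Y)\ra B$ of $\ECM_1$ the kernel of $H^1_\dR(C,Y)\ra G_\dR(B)$ is stable under $\End(H^1_\dR|_{\scr E})$ for a suitable finite subquiver $\scr E$; unlike \enumref{CondA}--\enumref{CondB}, this is sensitive to the de Rham endomorphism algebra and must be re-proved rather than imported verbatim from the Betti case. It is worth stressing why no shortcut is available over $\bbQ$: the representations $H^1_\Betti$ and $H^1_\dR$ do not even share the same $\bbQ$-dimension once $k\neq\bbQ$, so there is no exact functor $\mod(\bbQ)\ra\mod(\bbQ)$ carrying one to the other and \propositionref{Fonc3} cannot be applied directly; the comparison is forced to pass through the internal realizations on $\ECM_1$ and to descend the isomorphism of \lemmaref{IsoPer} from $\bbC$ back to $\bbQ$, which is precisely where the hypothesis that $k$ is a number field is used. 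Symmetrically, one could instead equip $\ECM_1^\dR$ with a Betti fiber functor and run \propositionref{AyoubBV} in the opposite direction; either way the crux is the construction of the cross-realization and the verification of \enumref{CondC}.
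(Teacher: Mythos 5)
Your proposal is a strategy outline rather than a proof, and the step you yourself flag as ``the main obstacle'' --- condition \enumref{CondC} of \propositionref{AyoubBV} for the de Rham fiber functor $G_\dR$ on $\ECM_1$ --- is never actually verified. This is not a routine verification that can be deferred: one must show that for each $\overline{H}^1_\Betti(C,Y)\ra B$ the kernel of $H^1_\dR(C,Y)\ra G_\dR(B)$ is stable under $\End_\bbQ(H^1_\dR|_{\scr E})$, and since this endomorphism algebra is computed from the $\bbQ$-linear (not $k$-linear) structure of the de Rham spaces, controlling it amounts to redoing the hard computational core of the Ayoub--Barbieri-Viale argument in the de Rham setting (compare the explicit computation the paper is forced to carry out in \propositionref{Equivuni} for the unipotent part, where showing $\End_\bbQ(T|_{\scr E})=k$ requires \lemmaref{lem:elementary} and a matrix analysis). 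Passing to $\bbC$ via \lemmaref{IsoPer} does not settle it, because $\End_\bbQ(H^1_\dR|_{\scr E})\otimes_\bbQ\bbC$ is the endomorphism algebra of $H^1_\dR\otimes_\bbQ\bbC\cong\bigoplus_{\sigma:k\hookrightarrow\bbC}H^1_\dR\otimes_{k,\sigma}\bbC$, not of $H^1_\dR\otimes_k\bbC$. There is also a smaller unaddressed point: condition \enumref{CondB} for $\ECM_1$ (every object a quotient of some $\overline{H}^1_\Betti(C,Y)$) is the Cartier-dual of what \cite{MR3302623} prove for the homological category, and needs at least a sentence through the anti-equivalence \eqref{eq:anti-eq-ehm-ecm}.

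The paper's proof avoids all of this and is purely formal: it forms the $2$-fiber product $\sA$ of $\mod(k)$ and $\mod(\bbQ)$ over $\mod(\bbC)$ and uses \lemmaref{IsoPer} to lift $H^1_\dR$ and $H^1_\Betti$ to a single representation $H^1_{\dR,\Betti}:\Crv^\op\ra\sA$, with two faithful exact projections $\Pi_1,\Pi_2:\sA\ra\mod(\bbQ)$ recovering the two original representations. The universal property then yields $\overline{\Pi}_1:\ECM_1^\dR\ra\sA$ and $\overline{\Pi}_2:\ECM_1\ra\sA$, so that $\Pi_2\circ\overline{\Pi}_1$ is a faithful exact functor on $\ECM_1^\dR$ lifting $H^1_\Betti$ and $\Pi_1\circ\overline{\Pi}_2$ one on $\ECM_1$ lifting $H^1_\dR$; universality produces the two comparison functors and the uniqueness clause shows they are quasi-inverse. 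No identification with $1$-motives, no equivalence criterion, and no endomorphism computation is needed. Your observation that \propositionref{Fonc3} cannot be applied directly because the two representations have different $\bbQ$-dimensions is correct, but your conclusion that one is therefore ``forced to pass through the internal realizations on $\ECM_1$'' is not: the fiber-product construction is exactly the device that circumvents both obstacles.
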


\begin{proof}
Consider the 2-fiber product $\sA$ of the categories $\mod(k)$ and $\mod(\bbQ)$ over $\mod(\bbC)$. An object of $\sA$ is thus a triplet $(V,W,\alpha)$ where $V$ is a finite dimensional $k$-vector space, $W$ is a finite dimensional $\bbQ$-vector space and $\alpha:V\otimes_k \bbC\ra W\otimes_\bbQ \bbC$ is an isomorphism of $\bbC$-vector spaces. The category $\scr A$ is a $\bbQ$-linear Abelian category with two $\bbQ$-linear exact faithful functors
$$\Pi_1:\scr A\ra \mod(\bbQ)\quad \Pi_2:\scr A\ra\mod(\bbQ)$$
given by the projection on the first factor composed with the forgetful functor and the projection on the second factor.
We may then consider the representation
\begin{align*}
H^1_{\dR,\Betti}:\Crv^\op & \ra \scr A \\
(C,Y) & \mapsto H^1_{\dR,\Betti}(C,Y):=(H^1_\dR(C,Y),H^1_\Betti(C,Y)\otimes_\bbZ \bbQ,\alpha)
\end{align*}
where the isomorphism $\alpha:H^1_\dR(C,Y)\otimes_k \bbC\ra H^1_\Betti(C,Y)\otimes_\bbZ\bbC$ is the one of  \lemmaref{IsoPer}. 
We have the commutative diagram
$$\xymatrix{{} & {\ECM_1^\dR}\ar[r]\ar[r]\ar@{.>}[d]^-{\overline{\Pi}_1}  & {\mod(\bbQ)}\\
{\Crv^\op}\ar[r]^{H^1_{\dR,\Betti}}\ar@/_2em/[rd]_-{\overline{H}^1_{\Betti}}\ar@/^2em/[ru]^-{\overline{H}^1_\dR} & {\scr A}\ar[rd]^-{\Pi_2}\ar[ru]_-{\Pi_1} & {}\\
{} & {\ECM_1}\ar[r]\ar@{.>}[u]_-{\overline{\Pi}_2} & {\mod(\bbQ)}} $$
where $\overline{\Pi}_1$ and $\overline{\Pi}_2$ are the functors provided by the universal properties. The subdiagram 
$$\xymatrix{{} & {\ECM_1^\dR}\ar[rd]^-{\Pi_2\circ\overline{\Pi}_1} & {}\\
{\Crv^\op}\ar[ru]^-{\overline{H}^1_{\dR}}\ar[rd]_-{\overline{H}^1_{\Betti}} & {} & {\mod(\bbQ)}\\
{} & {\ECM_1}\ar[ru]\ar@{.>}[uu] & {}} $$
provides then a $\bbQ$-linear functor $\ECM_1 \ra\ECM_1^\dR$. Similarly we get a $\bbQ$-linear functor $\ECM_1^\dR\ra\ECM_1$ and it is easy to check that they are quasi-inverse to one another.
\end{proof}

Let $C$ be any smooth affine $k$-curve. We denote by $\overline{C}$ its smooth compactification and set $C_\infty=\overline{C}\setminus C$ viewed as a reduced subscheme of $\overline{C}$. This induces a morphism of quivers
\begin{align}\label{eq:crv-to-omcrv}
\overline{(-)}:\Crv & \ra \oMCrv\\
(C,Y) &\mapsto (\overline{C},Y,C_\infty). \notag
\end{align}

\begin{rema}\label{RemaCorrection}
Let $f:(C,Y)\ra (C',Y')$ be a morphism in $\Crv$. Then, $f$ extends to a morphism $\overline{f}:\overline{C}\ra\overline{C}'$ between smooth compactifications. This morphism satisfies $f(\overline{C}\setminus C_\infty)\subset \overline{C}'\setminus C'_\infty$ and 
since $f(Y)\subset Y'$, we have
$$Y=Y_\red\leqslant (f^{*}(Y_\red'))_\red\leqslant f^*(Y_\red')=f^*(Y').$$
Therefore, $\overline{f}$ defines a morphism between $(\overline{C},Y,C_\infty)$ and $(\overline{C}',Y',C'_\infty)$ in $\oMCrv$.
Similarly, we have another morphism of quivers
\begin{align}\label{eq:crv-to-omcrv2}
\Crv & \ra \uMCrv\\
(C,Y) &\mapsto (\overline{C},C_\infty, Y). \notag
\end{align}
\end{rema}

Since by definition  $H^1_{\dR}=\bH^1_{\dR}\circ\overline{(-)}$ as representations of the quiver $\Crv^\op$, the universal property of Nori's construction (see e.g. \cite[Theorem 2]{IvorraPNM}) ensures the existence of a $\bbQ$-linear exact faithful functor 
$$I_{\ECM}:\ECM_1^\dR \ra\ECMM_1$$
and isomorphisms of functors
\[
I_{\ECM} \circ \overline{H}^1_\dR \to
\overline{\bH}^1_\dR \circ \overline{(-)},
\qquad
 F^a_\dR \circ I_{\ECM}\ra F_\dR
\]
that makes the square 
$$\xymatrix{{F^a_\dR \circ I_{\ECM} \circ \overline{H}^1_\dR}\ar[r]\ar[d] & {F^a_\dR \circ \overline{\bH}^1_\dR \circ \overline{(-)}}\ar[d]^-{=}\\
{F_\dR \circ \overline{H}^1_\dR}\ar[r]^-{=} & {H^1_\dR}} $$
commutative.

Let us consider now the $\bbQ$-linear Abelian category $\sB$ defined as follows. An object in $\scr B$ is a tuplet $(V,W,a,b)$ where $V,W$ are finite dimensional $k$-vector spaces and $a:V\ra W$ and $b:W\ra V$ are morphisms of $k$-vector spaces such that $b\circ a=\Id$. A morphism 
$$(V,W,a,b)\ra (V',W',a',b') $$
in $\scr B$ is simply a pair of $k$-linear morphisms $(f:V\ra V',g:W\ra W')$ such that $a'\circ f=g\circ a$ and $b'\circ g=f\circ b$. Note that by construction, we have two $\bbQ$-linear exact functors obtained by projection on the first and second factor composed with the forgetful functor.
$$\Pi_1:\scr B\ra \mod(\bbQ)\qquad \Pi_2:\scr B\ra \mod(\bbQ)$$
and that moreover $\Pi_2$ is faithful.

 Let $X$ be a smooth proper $k$-curve and $Y,Z$ be closed subschemes of $X$. Recall from \propositionref{CanDec} that there are two morphisms
\begin{equation}\label{Mora}
a:\bH^1_\dR(X,Y_\red,Z_\red)\ra \bH^1_\dR(X,Y,Z)
\end{equation}
and 
\begin{equation}\label{Morb}
b:\bH^1_\dR(X,Y,Z)\ra \bH^1_\dR(X,Y_\red,Z_\red)
\end{equation}
such that $b\circ a=\id$.
We may therefore consider the representation
\begin{align*}
\bH^1_{\dR,\scr B}:\oMCrv^\op&\ra \scr B\\
(X,Y,Z) & \mapsto (\bH^1_\dR(X,Y_\red,Z_\red),\bH^1_\dR(X,Y,Z), a, b)
\end{align*}
where $a$ and $b$ are the morphisms \eqref{Mora} and \eqref{Morb}. 
By construction $\Pi_2\circ \bH^1_{\dR,\scr B}=\bH^1_\dR$ 
and from \eqref{dRRep} 
we have $\Pi_1\circ \bH^1_{\dR,\scr B}=H^1_\dR\circ (-)_\et$ where $(-)_\et$
is the morphism of quivers 
\begin{align*}
(-)_\et : \oMCrv & \ra \Crv\\
(X,Y,Z)& \mapsto (X\setminus Z_\red, Y_\red).
\end{align*}

By \cite[Theorem 2]{IvorraPNM}, there exists a faithful exact $\bbQ$-linear functor $F^a_{\scr B}:\ECMM_1\ra\scr B$ and two isomorphisms of functors $\gamma:F^a_\dR\circ\overline{\bH}^1_{\dR}\ra \bH^1_{\dR,\scr B}$, $\delta:\Pi_2\circ F^a_\scr B\ra F^a_\dR $ such that 
$$\xymatrix{{\Pi_2\circ F^a_{\scr B}\circ \overline{\bH}^1_\dR}\ar[r]^-{\Pi_2\vc\gamma}\ar[d]^-{\delta\vc \overline{\bH}^1_\dR} & {\Pi_2\circ \bH^1_{\dR,\scr B}}\ar[d]^-{=}\\
{F^a_\dR\circ \overline{\bH}^1_\dR}\ar[r]^-{=} & {\bH^1_\dR}} $$
is commutative.

We may apply \cite[Proposition 6.6]{IvorraPNM} to $\Pi_1$ to obtain the existence of  a $\bbQ$-linear exact and faithful functor  
$$\Pi_{\ECM}:\ECMM_1 \ra \ECM_1^\dR$$
and isomorphisms of functors
\[ \Pi_1 \circ F_\sB^a \to F_\dR \circ \Pi_{\ECM}, \quad
\Pi_{\ECM} \circ \overline{\bH}^1_\dR \to \overline{H}^1_\dR \circ (-)_\et
\]
such that the diagram
\[
\xymatrix{
\Pi_1 \circ F_\sB^a \circ \overline{\bH}^1_\dR \ar[r] \ar[d]^-{\Pi_1\vc\gamma}
&
F_\dR \circ \Pi_{\ECM} \circ \overline{\bH}^1_\dR \ar[r]
&
F_\dR  \circ \overline{H}^1_\dR \circ (-)_\et \ar[ld]^-{=}
\\
\Pi_1 \circ \bH^1_{\dR,\scr B} \ar[r]_= 
&
H^1_{\dR} \circ (-)_\et
}
\]
commutes. (See \cite[Proposition 6.7]{IvorraPNM} for uniqueness.)

%
%
\begin{rema} Let $I_{\scr B}:\mod(k)\ra\scr B$ be the functor that maps $V$ to $(V,V,\Id,\Id)$.
The diagram 
$$\xymatrix{{\ECM_1^\dR}\ar[r]^-{F_\dR}\ar[d]^-{I_\ECM} & {\mod(k)}\ar[d]^-{I_\scr B}\ar[rd]\\
{\ECMM_1}\ar[r]^-{F^a_\scr B}\ar@/_2em/[rr]_-{F^a_\dR} & {\scr B}\ar[r]^-{\Pi_2} & {\mod(\bbQ)}} $$
is commutative up to isomorphisms of functors.
\end{rema}

\begin{prop}\label{EmbeddingMotives}
The composition $\Pi_{\ECM}\circ I_{\ECM}$ is isomorphic to the identity.  Moreover the functor $I_{\ECM}$ is fully faithful. \end{prop}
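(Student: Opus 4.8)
The plan is to establish the isomorphism $\Pi_{\ECM}\circ I_{\ECM}\simeq\id$ first, and then deduce full faithfulness of $I_{\ECM}$ as a formal consequence. Both functors arise from the universal property of Nori's construction (\theoremref{NoriLength}), so the strategy throughout is to verify that the composites in question satisfy the defining compatibilities of a functor produced by that universal property, and then invoke the uniqueness clause to identify them. First I would compute the composition of morphisms of quivers underlying $\Pi_{\ECM}\circ I_{\ECM}$. The functor $I_{\ECM}$ is induced by $\overline{(-)}:\Crv\ra\oMCrv$ sending $(C,Y)\mapsto(\overline{C},Y,C_\infty)$, while $\Pi_{\ECM}$ is induced by the \'etale restriction $(-)_\et:\oMCrv\ra\Crv$ sending $(X,Y,Z)\mapsto(X\setminus Z_\red,Y_\red)$. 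The key observation is that the composite quiver morphism $(-)_\et\circ\overline{(-)}$ sends $(C,Y)\mapsto(\overline{C}\setminus(C_\infty),Y_\red)=(C,Y)$, since $Y$ is already reduced (it consists of finitely many closed points) and $C_\infty$ is reduced by construction. Hence this composite is the identity on $\Crv$.

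Next I would feed this back through the realization functors. On realizations, $F_\dR\circ\overline{H}^1_\dR$ recovers $H^1_\dR$, and the compatibility isomorphisms packaged with $I_{\ECM}$ and $\Pi_{\ECM}$ in the excerpt show that $F_\dR\circ\Pi_{\ECM}\circ I_{\ECM}\circ\overline{H}^1_\dR$ is canonically isomorphic to $F_\dR\circ\overline{H}^1_\dR\circ\big((-)_\et\circ\overline{(-)}\big)=F_\dR\circ\overline{H}^1_\dR$, compatibly with the structural $2$-morphisms down to $H^1_\dR$. Concretely, one chases the two commuting squares displayed for $I_{\ECM}$ and for $\Pi_{\ECM}$, pasting them to produce an invertible $2$-morphism $F_\dR\circ(\Pi_{\ECM}\circ I_{\ECM})\simeq F_\dR\circ\id_{\ECM_1^\dR}$ over $H^1_\dR$. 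By the uniqueness part of the universal property of $\ECM_1^\dR$ (equivalently \cite[Proposition 6.7]{IvorraPNM}), there is at most one endofunctor of $\ECM_1^\dR$ over $F_\dR$ lifting the identity quiver morphism and compatible with the realization, so $\Pi_{\ECM}\circ I_{\ECM}\simeq\id$ follows.

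Finally, full faithfulness of $I_{\ECM}$ is a formal consequence. Since $\Pi_{\ECM}\circ I_{\ECM}\simeq\id_{\ECM_1^\dR}$, the functor $I_{\ECM}$ is a split monomorphism in the $2$-categorical sense: it admits a retraction. For any objects $M,N\in\ECM_1^\dR$, the map
\[
\Hom_{\ECM_1^\dR}(M,N)\xra{\;(I_{\ECM})_{M,N}\;}\Hom_{\ECMM_1}(I_{\ECM}M,I_{\ECM}N)
\xra{\;(\Pi_{\ECM})\;}\Hom_{\ECM_1^\dR}(M,N)
\]
composes to the identity (using the natural isomorphism $\Pi_{\ECM}\circ I_{\ECM}\simeq\id$), so $(I_{\ECM})_{M,N}$ is injective. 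As $I_{\ECM}$ is already known to be faithful and exact from the universal property, injectivity on $\Hom$-groups together with the splitting gives that $(I_{\ECM})_{M,N}$ is a retract of an identity map of $\bbQ$-vector spaces, hence both injective and surjective, i.e. bijective; this yields full faithfulness.

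The main obstacle I anticipate is the bookkeeping in the second step: one must paste several prescribed invertible $2$-morphisms (the two compatibility squares for $I_{\ECM}$ and $\Pi_{\ECM}$, together with the identifications $(-)_\et\circ\overline{(-)}=\id$) into a single coherent $2$-morphism over $F_\dR$ that matches the structural data down to $H^1_\dR$, so that the uniqueness clause genuinely applies. The geometric input—checking $(-)_\et\circ\overline{(-)}=\id$ on the nose, which hinges on $Y$ and $C_\infty$ being reduced—is straightforward, but care is needed to confirm that the two realization functors $F_\dR$ and the forgetful-through-$\sB$ functors are compatible so that surjectivity (not merely injectivity) on Hom-groups can be concluded in the last step.
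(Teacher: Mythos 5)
Your first step is exactly the paper's: the identity $(-)_\et\circ\overline{(-)}=\id_{\Crv}$ on quivers plus the uniqueness clause of the universal property (\cite[Proposition 6.7]{IvorraPNM}) gives $\Pi_{\ECM}\circ I_{\ECM}\simeq\id$. That part is fine.

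The gap is in your deduction of fullness. A natural isomorphism $\Pi_{\ECM}\circ I_{\ECM}\simeq\id$ only makes each map $(I_{\ECM})_{M,N}:\Hom(M,N)\ra\Hom(I_{\ECM}M,I_{\ECM}N)$ a \emph{split injection}; it does not make it surjective, and "a retract of an identity map, hence bijective" is not a valid inference (a split monomorphism of $\bbQ$-vector spaces need not be onto). A standard counterexample to the formal claim: $V\mapsto(V,V)$ from $\mod(\bbQ)$ to $\mod(\bbQ)\times\mod(\bbQ)$ admits the first projection as a retraction but is not full. So fullness of $I_{\ECM}$ genuinely requires extra input, which is where the auxiliary category $\sB$ enters in the paper's argument. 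The point is that $F^a_{\sB}\circ I_{\ECM}$ factors (up to isomorphism) through the diagonal functor $I_{\sB}:V\mapsto(V,V,\Id,\Id)$, so for \emph{any} morphism $\alpha:I_{\ECM}(M)\ra I_{\ECM}(N)$ in $\ECMM_1$ — not just one in the image of $I_{\ECM}$ — the two components of $F^a_{\sB}(\alpha)$ coincide: $\Pi_1\circ F^a_{\sB}(\alpha)=\Pi_2\circ F^a_{\sB}(\alpha)=F^a_{\dR}(\alpha)$. Setting $\beta:=\Pi_{\ECM}(\alpha)$, one then computes $F^a_{\dR}(I_{\ECM}(\beta))=F_{\dR}(\Pi_{\ECM}(\alpha))=\Pi_1\circ F^a_{\sB}(\alpha)=F^a_{\dR}(\alpha)$, and faithfulness of $F^a_{\dR}$ forces $I_{\ECM}(\beta)=\alpha$, i.e.\ $\alpha$ is in the image of $I_{\ECM}$. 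Your closing remark correctly flags that surjectivity is the delicate point, but the proposal as written does not close it; the diagonal constraint coming from $\sB$ is the missing idea.
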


\begin{proof}
Since $(-)_\et\circ \overline{(-)}$ is the identity on the quiver $\Crv$ the first assertion is an immediate consequence of the uniqueness statement \cite[Proposition 6.7]{IvorraPNM}. Let $M,N$ be objects in $\ECM_1^\dR$ and $\alpha:I_{\ECM}(M)\ra I_{\ECM}(N)$ be a morphism in $\ECMM_1$. Note that for such an $\alpha$, we have 
$$\Pi_1\circ F^a_{\sB}(\alpha)=\Pi_2\circ F^a_{\sB}(\alpha)=F^a_{\dR}(\alpha).$$
Let $\beta=\Pi_{\ECM}(\alpha)$. It is enough to show that  $I_{\ECM}(\beta)=\alpha$
and since $F^a_{\dR}$ is faithful, it is enough to show this equality after applying $F^a_{\dR}$. We have
\begin{align*}
F^a_{\dR}(I_{\ECM}(\beta)) &= F_{\dR}(\beta) = F_{\dR}(\Pi_{\ECM}(\alpha))\\
& =\Pi_1\circ F^a_{\scr B}(\alpha)=\Pi_2\circ F^a_{\sB}(\alpha)=F^a_{\dR}(\alpha).
\end{align*}
This concludes the proof.
\end{proof}

\section{Review of Laumon $1$-motives and its de Rham realization}
In this section, we recall necessary material from \cite{Laumon}, \cite{BVB},
introducing notations.

\subsection{}
Recall that we are working over a field $k$ of characteristic zero.
Let $\Aff$ be the category of affine schemes over $k$,
and let $\sS$ be 
the category of sheaves of Abelian groups on the fppf site on $\Aff$.
For $F \in \sS$,
we abbreviate $F(R):=F(\Spec R)$ for a $k$-algebra $R$,
and we put $\Lie(F):=\ker[F(k[\epsilon]/(\epsilon^2)) \to F(k)]$.

\subsection{}
We shall consider full subcategories of $\sS$.

Let $\sS_0$ be the full subcategory of $\sS$
consisting of objects that are represented by
connected commutative algebraic groups $G$ over $k$
(see \cite[(4.1)]{Laumon}).
We identify such a $G$ with the object in $\sS$ represented by $G$.

Let $\sS_l$ be the full subcategory of $\sS_0$
consisting of linear commutative algebraic groups over $k$,
We write $\sS_\uni$ (resp. $\sS_\mul$) for the full subcategory
of $\sS_l$ consisting unipotent (resp. multiplicative) groups.
For any $L \in \sS_l$,
there is a canonical decomposition
$L \cong L_{\uni} \times L_{\mul}$,
where $L_{\uni} \in \sS_\uni$ and $L_{\mul} \in \sS_\mul$.
The functor $\sS_{\uni} \to \mod(k), ~L \mapsto L(k)$ is an equivalence,
by which we often identify them.

Let $\sS_a$ be the full subcategory of $\sS_0$
consisting of Abelian varieties.
Recall that any $G \in \sS_0$ 
canonically fits in
an extension $0 \to G_l \to G \to G_{\ab} \to 0$,
where $G_{\ab} \in \sS_a$ and $G_l \in \sS_l$.
We ease the notation by putting 
$G_{\uni}=(G_l)_{\uni}$ and $G_{\mul}=(G_l)_{\mul}$.
We call $G_{\sa}:=G/G_{\uni}$ the \emph{semi-Abelian part} of $G$.

Let $\sS_{-1}$ be the full subcategory of $\sS$
consisting of formal groups over $k$ without torsion
in the sense of \cite[(4.2)]{Laumon}.
We write  $\sS_{\inf}$ (resp. $\sS_\et$) for
the full subcategory of $\sS_{-1}$
consisting of connected (resp. \'etale) formal groups.
For any $F \in \sS_{-1}$,
there is a canonical decomposition
$F \cong F_{\inf} \times F_{\et}$,
where $F_{\inf} \in \sS_{\inf}$ and $F_{\et} \in \sS_\et$.
The functor $\Lie : \sS_{\inf} \to \mod(k)$ is an equivalence,
with a quasi-inverse $V \mapsto V \otimes_k \hat{\bbG}_a$,
where $\hat{\bbG}_a$ denotes the formal completion of $\bbG_a$.

\subsection{}
Following \cite[(5.1.1)]{Laumon},
define a \emph{Laumon $1$-motive} to be a complex $[F \to G]$ in $\sS$
such that $F \in \sS_{-1}$ (placed at degree $-1$) 
and $G \in \sS_0$ (placed at degree $0$).
We denote the category of Laumon $1$-motives over $k$
by $\sM^a_1$ 
(or by $\sM^a_1(k)$ if we with to stress the dependency on $k$).
There is an equivalence $(\sM^a_1)^\op \to \sM^a_1$,
called \emph{Cartier duality}.

\subsection{}\label{sect:deligne-1-motives}
A Laumon $1$-motive $[F \to G]$ is called a 
\emph{Deligne $1$-motive} if $F_{\inf}=0$ and $G_\uni=0$.
Denote by $\sM_{1}$ the full subcategory of $\sM^a_1$
consisting of Deligne $1$-motives.
Along with this, 
we denote by $\sM_1^\uni$ (resp. $\sM_1^{\inf}$)
the essential image of an obvious full faithful functor
\begin{align*}
&\sS_\uni \to \sM^a_1, \quad
U \mapsto U[0]:=[0 \to U],
\\
(\text{resp.}~
&\sS_{\inf} \to \sM^a_1, \quad
F \mapsto F[1]:=[F \to 0]).
\end{align*}

\subsection{}\label{sect:def-grM}
Let $M=[F \to G] \in \sM^a_1$.
We define a filtration on $M$ by
\[ \fil^0_\sM M=M \supset \fil^1_\sM M=[F_\et \to G]
\supset \fil^2_\sM M=[0 \to G_\uni]
\supset \fil^3_\sM M=0.
\]
We put $\gr^i_\sM M := \fil^i_\sM M/\fil^{i+1}_\sM M$
so that
\[ 
\gr^0_\sM M \cong  F_{\inf}[1],~
\gr^1_\sM M \cong [F_{\et} \to G_\sa] =:M_\Del,~
\gr^2_\sM M = \fil^2_\sM M = G_\uni[0].
\]
We have defined functors
\[
\gr_\sM^0 : \sM^a_1 \to \sM_1^{\inf},\quad
\gr_\sM^1 : \sM^a_1 \to \sM_{1, \Del},\quad
\gr_\sM^2 : \sM^a_1 \to \sM_1^\uni.
\]
Note that all these functors are exact,
and that
$\gr_\sM^0$ (resp. $\gr_\sM^2$)
is a left (resp. right) adjoint to the inclusion
$\sM_1^{\inf} \hookrightarrow \sM^a_1$
(resp. $\sM_1^{\uni} \hookrightarrow \sM^a_1$).
Following \cite{BVB}, we also define (recall that $G_\sa=G/G_\uni$)
\[ M_\times := M/\fil^2_\sM M = [F \to G_\sa]. \]
The functor $M \mapsto M_\times$ is a left adjoint of the inclusion $\{ G \in \sM^a_1 ~|~ G_\uni=0 \} \hookrightarrow \sM^a_1$.

\subsection{}
We call $M=[F \to G] \in \sM^a_1$
\emph{unipotent free} if $G_\uni=0$.
For such $M$, 
it is shown in \cite[(2.2.3)]{BVB}
that there is
an extension
$M^{\natural} = [F \to G^\natural] \in \sM^a_1$
of $M$ by $\Ext_{\sM_1^a}(M, \bbG_a)^*$
such that it is universal among extensions of $M$
by an object of $\sM_1^{\uni}$.
(Here 
${}^*$ denotes $k$-linear dual.
Recall that by convention we identify a $k$-vector space with
an object of $\sS_\uni$.)

\subsection{}
Now take any $M=[u : F \to G] \in \sM^a_1$.
Note that  $M_\times$ and $M_\Del$ 
(introduced in \pararef{sect:def-grM})
are unipotent free.
By \cite[(2.3.2)]{BVB}, an exact sequence
\[ 0 \to M_\Del \to M_\times \to F_{\inf}[1] \to 0 \]
induces an exact sequence
\[ 0 \to (M_\Del)^\natural \to (M_\times)^\natural 
\to \vec{F}_{\inf} \to 0,
\]
where
$\vec{F}_{\inf} 
:= [F_{\inf} \to \Lie(F_{\inf})] \in \sM^a_1$.
Let us write 
$(M_\Del)^\natural=[u_\Del^\natural : F_\et \to G_\Del^\natural]$
and 
$(M_\times)^\natural=[u_\times^\natural : F \to G_\times^\natural]$.
Then we get an exact sequence
\begin{equation}\label{eq:can-spl1}
 0 \to \Lie(G_\Del^\natural) \to 
\Lie(G_\times^\natural) 
\to \Lie(F_{\inf}) \to 0,
\end{equation}
which admits a canonical splitting
given by $\Lie(u_\times^\natural)$.

We also need the following remark.
The universality of $(M_\times)^\natural$
induces maps $v_M$ and $v_M^\natural$ in
the following commutative diagram
with exact rows
\begin{equation}\label{eq:v_m}
\xymatrix{
0 \ar[r]
&
\Ext(M_\times, \bbG_a)^*
\ar[r] \ar[d]^{v_M}
& 
(M_\times)^\natural
\ar[r] \ar[d]^{v_M^\natural}
& 
M_\times
\ar[r] \ar[d]^{=}
& 0
\\
0 \ar[r]
&
G_\uni
\ar[r]
& 
M
\ar[r]
& 
M_\times
\ar[r]
& 0.
}
\end{equation}

\subsection{}
The \emph{sharp extension} $M^\sharp=[F \to G^\sharp]$ of 
$M=[F \to G] \in \sM_1^a$
is defined to be the pull-back of
$(M_\times)^\natural$ by the canonical surjection $M \to M_\times$.
(If $M$ is unipotent free, then $M^\sharp = M^\natural$.)
There is a commutative diagram with
exact rows and coloums
\begin{equation}\label{eq:big-diag}
\xymatrix{
&
& 
0 \ar[d]
& 
0 \ar[d]
&
\\
&
& 
G_\uni
\ar[r]^{=} \ar[d]
& 
G_\uni  \ar[d]_{i}
&
\\
0 \ar[r]
&
\Ext(M_\times, \bbG_a)^*
\ar[r] \ar[d]_{=}
& 
M^\sharp
\ar[r]_{p} \ar[d]_q
& 
M
\ar[r] \ar[d]
& 0
\\
0 \ar[r]
&
\Ext(M_\times, \bbG_a)^*
\ar[r]
& 
(M_\times)^\natural
\ar[r] \ar@{-->}[ur]_{v_M^\natural}  \ar[d]
& 
M_\times
\ar[r]  \ar[d]
& 0
\\
&
& 
0
& 
0.
&
}
\end{equation}
Note that the dotted arrow $v_M^\natural$ makes
the lower right triangle commutative by \eqref{eq:v_m},
but it is \emph{not} necessarily the case for the upper left triangle.
The middle vertical exact sequence in \eqref{eq:big-diag} admits 
a {canonical} splitting $s : M^\sharp \to G_\uni$
characterized by $i \circ s = p - (v_M^\natural \circ q)$.
Hence there also is an exact sequence
\begin{equation}\label{eq:can-spl2}
0
\to 
\Lie(G_\uni)
\to \Lie(G^\sharp)
\to \Lie(G_\times^\natural)
\to 0
\end{equation}
equipped with a {canonical} splitting.
Combined with \eqref{eq:can-spl1}, we obtain
a {canonical} decomposition
\begin{equation}\label{eq:can-spl3}
\Lie(G^\sharp)
\cong
\Lie(G_\Del^\sharp) \oplus \Lie(F_{\inf}) \oplus \Lie(G_\uni).
\end{equation}
%
%
%
%

\subsection{}
Following \cite[(3.2.1)]{BVB}, we call
an exact functor 
\[ \drR_\dR : \sM^a_1 \to \mod(k),
\quad \drR_\dR([F \to G]) := \Lie(G^\sharp).
\]
the \emph{sharp de Rham realization}.
By \eqref{eq:can-spl3},
we have a {canonical} decomposition
\begin{equation}\label{eq:can-spl4}
\drR_\dR(M) \cong
\drR_\dR(M_{\Del})\oplus \Lie(F_{\inf}) \oplus \Lie(G_\uni)
\end{equation}
for any $M=[F \to G] \in \sM^a_1$.

\subsection{}
Let $\sM_{1,\bbQ}:=\sM_1 \otimes_{\bbZ} \bbQ$ be
the $\bbQ$-linear Abelian category of Deligne $1$-isomotives
(see \eqref{eq:scalar-ext}).
Recall from \pararef{sect:EHM-ECM} that
$\EHM_1^{\bbQ}$ is the universal $\bbQ$-linear category
associated with the Betti homology functor
\eqref{eq:betti-hom} (with $K=\bbQ$).
In \cite{MR3302623}, \name{L. Barbieri-Viale} and \name{J. Ayoub} show 
the following important result\footnote{Actually, they prove a stronger statement with integral coefficients.}
which will be a key ingredient in the proof of our main result.

\begin{theo}[Ayoub \& Barbieri-Viale \cite{MR3302623}]\label{EquivDel}
We have an equivalence of 
$\bbQ$-linear Abelian categories
$$\EHM_{1}^\bbQ \xra{\sim}\sM_{1,\bbQ}.$$
\end{theo}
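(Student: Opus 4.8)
The plan is to deduce the equivalence from the criterion of \propositionref{AyoubBV}, applied to the quiver $\scr D=\Crv_k$ and the representation $T=H_1^\Betti:\Crv_k\ra\mod(\bbQ)$ of \eqref{eq:betti-hom}, for which $\EHM_1^\bbQ=\comod(\eusm C_T)$ holds by definition. First I would construct a representation $S:\Crv_k\ra\sM_{1,\bbQ}$ sending a pair $(C,Y)$ to the (iso)motive attached to the homological Deligne $1$-motive $M_1(C,Y)=[L\to G]$ of the curve, whose lattice part $L$ is the group of divisors on $\overline{C}$ supported on $Y$ and of degree zero on each component, and whose semi-Abelian part $G=\Pic^0(\overline{C},C_\infty)$ is the generalized Jacobian of $\overline{C}$ relative to the points at infinity. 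I would then take $G:\sM_{1,\bbQ}\ra\mod(\bbQ)$ to be the Betti realization, which is $\bbQ$-linear, exact and faithful, and exhibit a natural isomorphism $\beta:G\circ S\xra{\sim}T$ encoding Deligne's comparison between the Betti realization of $M_1(C,Y)$ and $H_1^\Betti(C,Y)\otimes_\bbZ\bbQ$. The universal property of \theoremref{NoriLength} then yields a $\bbQ$-linear functor $H:\EHM_1^\bbQ\ra\sM_{1,\bbQ}$ compatible with the realizations, and the content of the theorem reduces to showing that $H$ is an equivalence.

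To conclude I would verify the three hypotheses of \propositionref{AyoubBV}. Condition \enumref{CondA} is formal: the disjoint union $(C,Y)\sqcup(C',Y')=(C\sqcup C',Y\sqcup Y')$, together with its two inclusions, realizes the biproduct in $\sM_{1,\bbQ}$, since both $S$ and $T$ are additive for disjoint unions. Condition \enumref{CondC} is a finiteness statement: the kernel of $T(C,Y)\to G(B)$ should be stable under the endomorphisms arising from a finite subquiver $\scr E$. This I expect to follow from the fact that $S(C,Y)$, and hence any morphism out of it, is manufactured from finitely much algebraic data attached to $(C,Y)$, so that enlarging $\scr E$ to contain the curves and maps involved in $B$ (available once \enumref{CondB} is known) makes the kernel stable.

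The main obstacle is condition \enumref{CondB}: every Deligne $1$-isomotive must be a quotient of some $S(C,Y)$. Here I would argue along the weight filtration of an arbitrary $M=[L\xra{u}G]$ with $0\to T_{\mathrm{tor}}\to G\to A\to 0$. The lattice (weight $0$) part is realized by enlarging the marked set $Y$; the toric (weight $-2$) part is realized by the points at infinity $C_\infty$; and the Abelian (weight $-1$) part is handled by the classical fact that every Abelian variety is an isogeny quotient of a Jacobian of a smooth projective curve, which suffices rationally. The genuinely delicate point, which I expect to be the hardest, is that one must realize not merely the graded pieces but the \emph{extension data} simultaneously from a single pair: the class of the extension of $A$ by $T_{\mathrm{tor}}$ (a point of $A^\vee$ tensored with the character lattice) together with the morphism $u:L\to G$. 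I would resolve this by choosing a curve rich enough — adjoining auxiliary punctures and marked points and passing to a suitable correspondence or fibre product of pairs — so that its homological $1$-motive surjects onto the prescribed $M$; the freedom afforded by $\bbQ$-coefficients and by replacing $M$ with an isogenous motive is what makes this assembly possible. Once \enumref{CondA}--\enumref{CondC} are verified, \propositionref{AyoubBV} gives that $H$ is an equivalence, which is the assertion.
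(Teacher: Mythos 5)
The paper offers no proof of \theoremref{EquivDel}: it is imported verbatim from \cite[Theorem 5.2, Theorem 6.1]{MR3302623} (with a footnote that the original is even proved integrally), so there is no internal argument to compare against. That said, your strategy --- build $S:(C,Y)\mapsto [\Div^0_Y(\overline{C})\to J(\overline{C},C_\infty)]$, take $G$ to be the Betti realization, and run the criterion of \propositionref{AyoubBV} --- is exactly the architecture of the original proof, and it is also the template this paper follows for its own main theorem; indeed \remarkref{rem:a-bv-functor} records precisely your functor $S$ as the composition of $\uLM$ with \eqref{eq:crv-to-omcrv2}. So the skeleton is right; the issue is that the two load-bearing conditions are left as expectations rather than arguments.

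The real gap is condition \enumref{CondB}. You correctly identify that the extension data and the map $u:L\to G$ must be realized simultaneously from a single pair, but \emph{adjoining punctures and passing to a correspondence or fibre product of pairs} is not how this is done and does not obviously terminate. The standard construction (Serre, \cite[Chap.~VII, \S 2, no.~13, Thm.~4]{SerreGACC}, reproduced and extended in Step 1 of the proof of \propositionref{QuotProp} in this paper) works \emph{inside} the semi-abelian variety $G$: choose integral curves $C_i'\subset G$ passing through the identity and through the finitely many points $u(e_i)$, normalize and compactify to $X_i$, take for $Y_i$ a modulus of $C_i\to G$ so that the map factors through a surjection $\bigoplus_i J(X_i,Y_i)\to G$, put the marked divisor $Z$ at the preimages of the $u(e_i)$, and take $X=\bigsqcup_i X_i$; the divisor-class map then reproduces $u$ on the nose, and one descends from $\overline{k}$ to $k$ by Weil restriction (this is where $\bbQ$-coefficients are genuinely used). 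Without this construction, \enumref{CondB} is an assertion, not a proof. Condition \enumref{CondC} is likewise under-argued: ``manufactured from finitely much algebraic data'' is not a reason for the kernel to be an $\End(T|_{\scr E})$-submodule; the actual verification goes through the weight filtration of $M$, reducing to the graded pieces (lattice, abelian, toric), where the stability can be checked after the corresponding subcategory equivalences are established --- compare the diagram chase in \pararef{sect:pf-main-thm}. With those two points filled in, your outline becomes the proof in \cite{MR3302623}.
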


This functor is induced 
by a functor $\Crv \to \sM_1$ via universality
(see \remarkref{rem:a-bv-functor}).
We will construct its modulus version in the next section.

\section{1-motives of a curve with modulus and the main theorem}
In this section, 
to a smooth proper $k$-curve $X$ and
two effective divisors $Y, Z$ on $X$ with disjoint support,
we associate 
a Laumon $1$-motive
$\LM(X, Y, Z) \in \sM_1^a$.
We shall see functorial properties
that yield two functors
$\oLM : \oMCrv \to \sM^a_1$ and $\uLM : \uMCrv \to \sM^a_1$.

\subsection{}\label{sect:gen-jac}
Let $X$ be a smooth proper $k$-curve and $Y$ an effective divisors on $X$.
We denote by $J(X, Y) \in \sS_0$ the \emph{generalized Jacobian}
of $X$ with modulus $Y$ in the sense of Rosenlicht-Serre \cite{Rosenlicht,SerreGACC}.
Recall that
$J(X, Y)$ is the connected component of the Picard scheme
$\underline{\Pic}(X_Y)$ of a proper $k$-curve $X_Y$
that is obtained by collapsing $Y$ into a single (usually singular) point
(see \cite[Chapter IV, \S 3--4]{SerreGACC}).
It can also be defined 
as the Albanese variety attached to a pair $(X,Y)$ \cite[Example 2.34]{MR2513591}, \cite[\S 3.3]{MR3095229}.

Let $X'$ be another smooth proper $k$-curve
and $Y'$ an effective divisor on it.
Let $f: X \to X'$ be a $k$-morphism.
When $Y \le f^*Y'$, we have a pull-back
$f^* : J(X', Y') \to J(X, Y)$
deduced by the functoriality of the Picard scheme.
When $Y-Y_\red \ge f^*(Y'-Y_\red')$
and $Y_\red \ge (f^* Y')_\red$, we have a push-forward
$f_* : J(X, Y) \to J(X', Y')$
by \cite[Proposition 3.22]{MR3095229}.

\begin{lemm}
There exists a canonical isomorphism
(cf. \propositionref{prop:uv})
\begin{equation}\label{eq:lie-jacuni}
\Lie J(X, Y)_\uni \cong U(X, Y).
\end{equation}
\end{lemm}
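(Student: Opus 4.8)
The plan is to deduce the formula from the explicit description of the linear part of the generalized Jacobian in the Rosenlicht--Serre theory. Write $Y=\sum_{P\in|Y|}n_P P$. I would first recall from \cite[Chapitre V]{SerreGACC} that $J(X,Y)$ sits in a canonical extension
\[ 0 \to L_Y \to J(X,Y) \to J(X) \to 0, \]
where $J(X)$ is the Jacobian of $X$ (an Abelian variety) and $L_Y$ is the linear group
\[ L_Y = \Bigl(\prod_{P\in|Y|} U_P^{(0)}/U_P^{(n_P)}\Bigr)\big/\bbG_m, \]
with $U_P^{(0)}=\Osheaf_{X,P}^\times$, $U_P^{(i)}=1+\mathfrak{m}_P^i$ (so that $U_P^{(0)}/U_P^{(n_P)}=(\Osheaf_{X,P}/\mathfrak{m}_P^{n_P})^\times$), and $\bbG_m$ embedded diagonally through constant units. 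As $J(X)$ is an Abelian variety, $L_Y$ is exactly the linear part of $J(X,Y)$, whence $J(X,Y)_\uni=(L_Y)_\uni$.

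Next I would isolate the unipotent part factor by factor. The multiplicative part of $U_P^{(0)}/U_P^{(n_P)}$ is $U_P^{(0)}/U_P^{(1)}\cong \kappa(P)^\times$, while its unipotent part is $U_P^{(1)}/U_P^{(n_P)}$. The diagonal $\bbG_m$ lies in the multiplicative direction, hence leaves the unipotent quotient untouched, giving $(L_Y)_\uni\cong\prod_{P}U_P^{(1)}/U_P^{(n_P)}$. Since $k$ has characteristic zero, $U_P^{(1)}/U_P^{(n_P)}$ is the unipotent group $1+\mathfrak{m}_P/\mathfrak{m}_P^{n_P}$ sitting inside $(\Osheaf_{X,P}/\mathfrak{m}_P^{n_P})^\times$, and its Lie algebra is canonically the additive group $\mathfrak{m}_P/\mathfrak{m}_P^{n_P}$ (the tangent space at the identity, on which the group law linearizes to addition). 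Summing over $P$ and invoking \propositionref{prop:uv}, which identifies $U(X,Y)=H^0(X,\Isheaf_{Y_\red}/\Isheaf_Y)$ with $\bigoplus_P \mathfrak{m}_P/\mathfrak{m}_P^{n_P}$, produces the asserted isomorphism $\Lie J(X,Y)_\uni\cong U(X,Y)$.

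The main difficulty here is not conceptual but one of canonicity and bookkeeping: I must check that the diagonal $\bbG_m$ genuinely sits in the multiplicative part, so that passing to the unipotent quotient eliminates it, and that every identification used---the local factorization of $L_Y$, its canonical splitting into multiplicative and unipotent parts, and the computation of the Lie algebra---is canonical and functorial, so that the final isomorphism with $U(X,Y)$ is canonical as claimed. One further point of care is that the points $P\in|Y|$ and the divisor $Y$ need not be $k$-rational; accordingly $\kappa(P)^\times$ must be read as the Weil restriction $R_{\kappa(P)/k}\bbG_m$. This affects the torus part of $L_Y$ but not the unipotent computation, which only uses that $\mathfrak{m}_P/\mathfrak{m}_P^{n_P}$ is a finite-dimensional $k$-vector space.
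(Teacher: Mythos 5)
Your argument is correct, but it takes a genuinely different route from the paper's. The paper never opens up the linear part of $J(X,Y)$ as a group: it works entirely on the level of Lie algebras, using the exact sequence $0 \to \Isheaf_Y \to \Isheaf_{Y_\red} \to \Isheaf_{Y_\red}/\Isheaf_Y \to 0$, the vanishing of $H^0(X,\Isheaf_{Y_\red})$ for $Y$ non-empty, and Serre's identification $\Lie J(X,Y) \cong H^1(X,\Isheaf_Y)$ to realize $U(X,Y)=H^0(X,\Isheaf_{Y_\red}/\Isheaf_Y)$ as $\ker[H^1(X,\Isheaf_Y) \to H^1(X,\Isheaf_{Y_\red})]$, which is $\Lie J(X,Y)_\uni$ because $J(X,Y)_\sa = J(X,Y_\red)$. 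Your route instead goes through the Rosenlicht--Serre local-units presentation $L_Y = \bigl(\prod_P U_P^{(0)}/U_P^{(n_P)}\bigr)/\bbG_m$, splits off the unipotent part $\prod_P U_P^{(1)}/U_P^{(n_P)}$, and computes its tangent space. Both are legitimate; the paper's version is shorter, stays in the sheaf-theoretic language in which $U(X,Y)$ is defined (so that the functoriality needed later, e.g.\ in \propositionref{prop:compati}, is immediate from functoriality of the ideal sheaves), and sidesteps the rationality and canonicity issues you rightly flag: the canonical splitting of $(\Osheaf_{X,P}/\mathfrak{m}_P^{n_P})^\times$ into multiplicative and unipotent parts (via the Hensel section $\kappa(P)\to\Osheaf_{X,P}/\mathfrak{m}_P^{n_P}$ in characteristic zero), the Weil restrictions for non-rational $P$, and the verification that the diagonal $\bbG_m$ lands in the torus factor. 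What your approach buys is an explicit description of $J(X,Y)_\uni$ itself as a group, not merely of its Lie algebra, which the cohomological argument does not give.
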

\begin{proof}
If $Y=\emptyset$, then $J(X, Y)$ is an Abelian variety
so that $J(X, Y)_\uni=0$, and hence the lemma holds.
We suppose $Y \not= \emptyset$ in what follows.
Consider an exact sequence of sheaves on $X$:
\[ 0 \to \Isheaf_Y \to \Isheaf_{Y_{\red}} \to 
\Isheaf_{Y_\red}/\Isheaf_Y \to 0.
\]
We have $H^0(X, \Isheaf_{Y_{\red}})=0$
since $Y$ is a non-empty effective divisor.
It follows that
\[
H^0(X, \Isheaf_{Y_\red}/\Isheaf_Y) \cong
\ker(H^1(X, \Isheaf_Y) \to H^1(X, \Isheaf_{Y_{\red}})).
\]
By \cite[Chapter V, \S 10, Proposition 5]{SerreGACC},
there are canonical isomorphisms
\[
H^1(X, \Isheaf_Y) \cong \Lie J(X, Y), \quad
H^1(X, \Isheaf_{Y_{\red}}) \cong \Lie J(X, Y_\red).
\]
Now the lemma follows from an exact sequence
\[
0 \to \Lie J(X, Y)_{\uni}
\to \Lie J(X, Y) \to \Lie J(X, Y)_\sa \to 0
\]
and a canonical isomorphism $J(X, Y)_{\sa}=J(X, Y_\red)$.
\end{proof}

\subsection{}
Let $X$ be a smooth proper $k$-curve and $Z$ an effective divisors on $X$.
We construct an object
\begin{equation}
F(X, Z):= F(X, Z)_{\inf} \times F(X, Z)_{\et} \in \sS_{-1}
\end{equation}
as follows. First, we define 
\[ F(X, Z)_{\et}:=\ker[\pi_0(Z) \to \pi_0(X)], \]
where the map is the one induced by the closed immersion $Z \to X$.
Here for any $k$-variety $V$, 
we define $\pi_0(V) \in \sS_{-1}$ by
declaring $\pi_0(V)(U)$ 
is the free Abelian group on the set of connected components of
$U \times_k V$ for $U \in \Aff$.
This depends only on the reduced part of $V$.
Next, we define (cf. \propositionref{prop:uv}, see also \cite[\S 5.3]{MR2985516})
\begin{equation}\label{eq:def-f-inf}
 F(X, Z)_{\inf}:=V(X, Z) \otimes_k \hat{\bbG}_a.
\end{equation}

Let $X'$ be another smooth proper $k$-curve
and $Z'$ an effective divisor on it.
Let $f: X \to X'$ be a $k$-morphism.
There is  
a pull-back $f^* : F(X', Z') \to F(X, Z)$
(resp. a push-forward $f_* : F(X, Z) \to F(X', Z')$)
when $Z-Z_\red \ge f^*(Z'-Z_\red')$ and $Z_\red \ge (f^* Z')_\red$
(resp. $Z \le f^*Z'$).
On the infinitesimal (resp. \'etale) part, 
they are defined by \corollaryref{prop:pull-push-uv}
(resp. pull-back and push-forward of cycles).

\subsection{}
We recall from \cite[\S 2.1]{MR2513591} Russell's results that we will use.
Let $V$ be a noetherian reduced scheme.
Define $\underline{\Div}_V \in \sS$ to be
the sheaf that associates to $\Spec(R) \in \Aff$
the group of all Cartier divisors on $V \otimes_k R$
generated locally on $\Spec(R)$ by 
effective Cartier divisors which are flat over $R$.
There is a canonical ``class'' map 
\begin{equation}\label{eq:cl-map}
\cl : \underline{\Div}_V \to \underline{\Pic}_V
\end{equation}
to the Picard scheme $\underline{\Pic}_V$ of $V$.
Denote by $\underline{\Div}^0_V$
the inverse image of the connected component
$\underline{\Pic}_X^0$ of $\underline{\Pic}_X$ by $\cl$.
We have $\underline{\Div}_V^0(k)=H^0(V, \Ksheaf_V^\times/\Osheaf_V^\times)$
(the group of Cartier divisors on $V$)
and $\Lie(\underline{\Div}_V^0)=H^0(V, \Ksheaf_V/\Osheaf_V)$,
where $\Ksheaf_V$ is the sheaf of total ring of fractions of $\Osheaf_V$.
In \cite[Proposition 2.13]{MR2513591} it is shown that
for any $F \in \sS_{-1}$ and
a pair of maps 
\[
a_{\inf} : \Lie(F) \to \Lie(\underline{\Div}_V^0),
\quad
a_{\et} : F(k) \to \underline{\Div}_V^0(k),
\]
there exists a unique map 
\begin{equation}\label{eq:map-to-div}
a=(a_{\inf}, a_{\et}) : F \to \underline{\Div}_V
\end{equation}
that induces a map $a_{\inf}$ (resp. $a_{\et}$)
via $\Lie$ 
(resp. by taking sections over $\Spec k$).

Let $X$ be a smooth proper $k$-curve and 
let $Y, Z$ be two effective divisors on $X$ with disjoint support.
We apply the above argument to $V=X_Y$, 
where $X_Y$ is the curve we discussed in \pararef{sect:gen-jac}.
Since $Y$ and $Z$ are disjoint,
we may identify $Z$ as a closed subscheme of $X_Y$.
We define
\begin{align*}
\tau_{\inf}' : 
&\Lie(F(X, Z)_{\inf})
= H^0(X, \Isheaf_Z^{-1}\Isheaf_{Z_\red}/\Osheaf_X)
= H^0(X_Z, \Isheaf_Z^{-1}\Isheaf_{Z_\red}/\Osheaf_{X_Y})
\\
&\to H^0(X, \Ksheaf_{X_Y}/\Osheaf_{X_Y})
= \Lie(\underline{\Div}_{X_Y}^0)
\end{align*}
to be the map induced by the inclusion
$\Isheaf_Z^{-1}\Isheaf_{Z_\red} \subset \Ksheaf_{X_Y}$.
Also, we define
\[ \pi_0(Z)(k)=Z_0(Z) 
\to \underline{\Div}_{X_Y}(k) = \Div(X_Y) 
\]
by sending $D \in Z_0(Z)$ to $\Osheaf_{X_Y}(D)$.
It restricts to 
\[
\tau_{\et}' : 
F(X, Z)_\et(k) = \ker[\pi_0(Z) \to \pi_0(X)]
\to \underline{\Div}_{X_Y}^0(k).
\]
Using them, we define 
\[ \tau(X, Y, Z):=\cl \circ (\tau_{\inf}', ~\tau_{\et}') : 
F(X, Z) \to \underline{\Pic}_{X_Y}^0 = J(X, Y),
\]
where we used the notations from
\eqref{eq:cl-map} and \eqref{eq:map-to-div}.
We then define a Laumon $1$-motive 
attached to $(X, Y, Z)$ by
\begin{equation}\label{eq:def-lm}
\LM(X, Y, Z) 
:= [ F(X, Z) \overset{\tau(X, Y, Z)}{\longrightarrow} J(X, Y)]
\in \sM^a_1.
\end{equation}
From this definition it is evident that
\begin{equation}\label{eq:red-Del}
 \LM(X, Y, Z)_\Del = \LM(X, Y_\red, Z_\red).
\end{equation}

\subsection{}\label{FoncLM}
Let $X'$ be another smooth proper $k$-curve and 
let $Y', Z'$ be two effective divisors on $X'$ with disjoint support.
Let $f : X \to X'$ be a $k$-morphism.
If $f$ defines a morphism in $\oMCrv$,
then the square
$$\xymatrix@C=2cm{{F(X',Z')}\ar[r]^-{\tau(X',Y',Z')}\ar[d]^-{f^*} & {J(X',Y')}\ar[d]^-{f^*}\\
{F(X,Z)}\ar[r]^-{\tau(X,Y,Z)} & {J(X,Y)}} $$
commutes. 
Similarly 
if $f$ defines a morphism in $\uMCrv$, then the square
$$\xymatrix@C=2cm{{F(X,Z)}\ar[r]^-{\tau(X,Y,Z)}\ar[d]^-{f_*} & {J(X,Y)}\ar[d]^-{f_*}\\
{F(X',Z')}\ar[r]^-{\tau(X',Y',Z')} & {J(X',Y')}} $$
commutes. 
This enables us to make the following definition.

\begin{defi}
We define a  functor
\[ \oLM : \oMCrv^{\op} \to \sM^a_1, 
\quad (\text{resp. } \uLM : \uMCrv \to \sM^a_1)
\]
by setting
\begin{equation}
\oLM(X, Y, Z) 
=\uLM(X, Y, Z) 
=\LM(X, Y, Z),
\end{equation}
and $\oLM(f)=f^*$ (resp. $\uLM(f)=f_*$)
for a morphism
$f$ in $\oMCrv$ (resp. in $\uMCrv$).
\end{defi}

\begin{rema}\label{rem:a-bv-functor}
The composition of $\uLM$ with 
$\Crv \to \uMCrv$ from \eqref{eq:crv-to-omcrv2}
factors through $\sM_1$ (see \pararef{sect:deligne-1-motives}).
This induces the functor in \theoremref{EquivDel}
via universality.
\end{rema}


\begin{prop}\label{prop:compati}
There is an isomorphism of functors 
$\drR_\dR \circ \oLM \to \bH^1_\dR$.
\end{prop}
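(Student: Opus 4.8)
The plan is to reduce the statement to the \emph{Deligne part} by matching two canonical decompositions already at our disposal. Applying the splitting \eqref{eq:can-spl4} to $M=\oLM(X,Y,Z)=[F(X,Z)\to J(X,Y)]$, and using \eqref{eq:red-Del}, the definition \eqref{eq:def-f-inf} together with $\Lie(V\otimes_k\hat{\bbG}_a)=V$, and the identification \eqref{eq:lie-jacuni}, gives a canonical decomposition
\[
\drR_\dR(\oLM(X,Y,Z))\cong \drR_\dR(\oLM(X,Y_\red,Z_\red))\oplus V(X,Z)\oplus U(X,Y),
\]
which I would compare with \propositionref{CanDec}:
\[
\bH^1_\dR(X,Y,Z)\cong \bH^1_\dR(X,Y_\red,Z_\red)\oplus U(X,Y)\oplus V(X,Z).
\]
The summands $U(X,Y)$ and $V(X,Z)$ agree on the nose; I would check that the isomorphisms \eqref{eq:lie-jacuni} and \eqref{eq:def-f-inf} are natural for morphisms of $\oMCrv$, the relevant functorialities being precisely those of \corollaryref{prop:pull-push-uv} which also govern $\oLM(f)=f^*$ on $J(X,Y)_\uni$ and $F(X,Z)_\inf$. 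This leaves only the equality on the Deligne parts, i.e.\ reduces everything to the case of reduced $Y$ and $Z$.

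For reduced divisors the motive $\oLM(X,Y_\red,Z_\red)=[F(X,Z_\red)_\et\to J(X,Y_\red)]$ is a Deligne $1$-motive (one has $V(X,Z_\red)=0$ and $U(X,Y_\red)=0$), hence unipotent free, so $\drR_\dR(\oLM(X,Y_\red,Z_\red))=\Lie(J(X,Y_\red)^\natural)$ is the Lie algebra of the universal vector extension. My plan is then to compare two short exact sequences. The universal vector extension gives
\[
0\to \Ext_{\sM^a_1}(\oLM(X,Y_\red,Z_\red),\bbG_a)^*\to \Lie(J(X,Y_\red)^\natural)\to \Lie J(X,Y_\red)\to 0,
\]
while \lemmaref{lem:dual-curve} applied to $C^*=[\Isheaf_{Y_\red}\to\Isheaf_{Z_\red}^{-1}\Omega^1_X]$, after the vanishing of $H^0(X,\Isheaf_{Y_\red})$ (for $Y_\red\neq\emptyset$) and of the differential $H^1(X,\Isheaf_{Y_\red})\to H^1(X,\Isheaf_{Z_\red}^{-1}\Omega^1_X)$ (its Serre dual), yields the Hodge-type sequence
\[
0\to H^0(X,\Isheaf_{Z_\red}^{-1}\Omega^1_X)\to \bH^1_\dR(X,Y_\red,Z_\red)\to H^1(X,\Isheaf_{Y_\red})\to 0.
\]
On the quotients I would invoke Serre's isomorphism $\Lie J(X,Y_\red)\cong H^1(X,\Isheaf_{Y_\red})$ (\cite[Chapter V, \S 10, Proposition 5]{SerreGACC}), already used in the proof of \eqref{eq:lie-jacuni}. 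On the subobjects I would identify $\Ext_{\sM^a_1}(M,\bbG_a)^*$ with the invariant differentials $\omega_{M^\vee}$ of the Cartier dual $M^\vee=\oLM(X,Z_\red,Y_\red)$, that is with the cotangent space of $J(X,Z_\red)$, which by Rosenlicht--Serre is $H^0(X,\Omega^1_X(Z_\red))=H^0(X,\Isheaf_{Z_\red}^{-1}\Omega^1_X)$; this is coherent with \propositionref{prop:dual-hdr}, according to which swapping $Y$ and $Z$ dualizes $\bH^1_\dR$.

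The main obstacle is to upgrade this matching of graded pieces into a single canonical and natural isomorphism of the filtered spaces, i.e.\ to see that the two extension classes coincide. I would do this directly, by realizing $\Lie(J(X,Y_\red)^\natural)$ as the first hypercohomology of $[\Isheaf_{Y_\red}\to\Isheaf_{Z_\red}^{-1}\Omega^1_X]$ through the standard description of the universal vector extension of a generalized Jacobian (line bundles equipped with a connection having prescribed poles along $Z_\red$), so that the comparison map becomes tautological and its $\oMCrv$-functoriality is inherited from that of \eqref{eq:funct-1}. Should pinning down the extension this way prove delicate, a fallback is to extend scalars to $\bbC$, use the period comparison \lemmaref{IsoPer} and the classical identification of the Betti realization of $\oLM(X,Y_\red,Z_\red)$ with $H^1_\Betti$ of the curve to get the isomorphism over $\bbC$, and then descend it using the $k$-rational Hodge filtrations on both sides. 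Finally I would confirm naturality by testing the isomorphism separately on the sub and on the quotient of the two sequences above, where it is controlled respectively by the trace and pull-back maps on differentials and by the functoriality of the generalized Jacobian recorded in \propositionref{prop:funct-dif} and \pararef{sect:gen-jac}.
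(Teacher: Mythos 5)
Your reduction to the reduced (Deligne) case is exactly the paper's argument: both use the canonical splitting \eqref{eq:can-spl4} together with \eqref{eq:red-Del}, \eqref{eq:def-f-inf} and \eqref{eq:lie-jacuni} to peel off the summands $U(X,Y)$ and $V(X,Z)$, and then match the result against the decomposition \eqref{eq:hdr-dec} of \propositionref{CanDec}. The only divergence is in how the remaining comparison $\drR_\dR(\oLM(X,Y_\red,Z_\red)) \cong \bH^1_\dR(X,Y_\red,Z_\red)$ is handled: the paper simply cites \cite[Corollary 2.6.4]{MR1891270}, whereas you sketch a proof of that statement by comparing the universal-vector-extension sequence with the Hodge-type sequence for $[\Isheaf_{Y_\red}\to\Isheaf_{Z_\red}^{-1}\Omega^1_X]$ and identifying $\Lie(G^\natural)$ with the hypercohomology via line bundles with connection. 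That is in substance the proof given by Barbieri-Viale and Srinivas, so your primary route is sound; you correctly isolate the real difficulty (matching the extension classes, not just the graded pieces), which is precisely what the cited corollary settles. I would only caution against leaning on your fallback: an isomorphism produced over $\bbC$ via \lemmaref{IsoPer} compatible with the Hodge filtrations does not automatically descend to a canonical $k$-rational isomorphism, so the direct geometric identification (or the citation) is the safer path.
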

\begin{proof}
Let $(X, Y, Z) \in \oMCrv$.
By \eqref{eq:can-spl4}, \eqref{eq:lie-jacuni}, 
\eqref{eq:def-f-inf} and \eqref{eq:red-Del},
we have
\begin{align*}
 \drR_\dR \circ \oLM(X, Y, Z) 
&= \drR_\dR(X, Y_\red, Z_\red) \oplus U(X, Y) \oplus V(X, Z).
\end{align*}
Moreover, by
\cite[Corollary 2.6.4]{MR1891270}
there is a canonical isomorphism
$\drR_\dR(X, Y_\red, Z_\red) \cong \bH^1_\dR(X, Y_\red, Z_\red)$.
Now the proposition follows from \eqref{eq:hdr-dec}.
\end{proof}

\begin{rema}
There is also an isomorphism of functors 
$\drR_\dR \circ \uLM \to {}^t\bH^1_\dR$,
considered as functors $\uMCrv \to \mod(k)$,
see \eqref{eq:functors-bh2}.
\end{rema}

\begin{rema}
(This remark will not be used in the sequel.)
For any $(X, Y, Z) \in \oMCrv$, we find that
$\LM(X, Y, Z)$ and $\LM(X, Z, Y)$ are Cartier dual to each other.
In other words, 
using a functor 
$\Sw : \uMCrv \to \oMCrv$ defined by $\Sw(X, Y, Z)=(X, Z, Y)$,
we get a commutative diagram
\[
\xymatrix{
\oMCrv 
\ar[d]_{\Sw}
\ar[r]^{\oLM^\op}
&
(\sM_{1}^a)^{\op}
\ar[d]^{\text{Cartier dual}}
\\
\uMCrv 
\ar[r]_{\uLM}
&
\sM_{1}^a.
}
\]
%
%
\end{rema}

\subsection{}
Let $\sM^a_{1, \bbQ} := \sM_1^a \otimes_\bbZ \bbQ$
be the $\bbQ$-linear Abelian category of Laumon $1$-isomotives
(see \eqref{eq:scalar-ext}).
\begin{prop}\label{QuotProp}
Any Laumon $1$-motive $M=[F \overset{u}{\to} G]$
is a quotient in $\sM^a_{1, \bbQ}$
of $\oLM(X, Y, Z)$ for some object $(X, Y, Z)$ of $\oMCrv$.
\end{prop}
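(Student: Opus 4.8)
The plan is to produce an object $(X,Y,Z)$ of $\oMCrv$ together with a morphism of complexes
$(\phi,\psi)\colon \oLM(X,Y,Z)=[F(X,Z)\xrightarrow{\tau} J(X,Y)]\to M=[F\xrightarrow{u}G]$
that is an epimorphism in $\sM^a_{1,\bbQ}$. First I would record that, because we work with $\bbQ$-coefficients, both the category $\sS_{-1}\otimes\bbQ$ of formal groups without torsion and the isogeny category $\sS_0\otimes\bbQ$ of connected commutative algebraic groups are abelian; hence $\sM^a_{1,\bbQ}$ is the abelian category of two-term complexes with term-wise kernels and cokernels. Consequently $(\phi,\psi)$ is an epimorphism precisely when $\phi\colon F(X,Z)\to F$ and $\psi\colon J(X,Y)\to G$ are both surjective and $u\phi=\psi\tau$. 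So the whole problem reduces to constructing such $X,Y,Z$, a surjection $\psi$ on the degree-$0$ term, a surjection $\phi$ on the degree-$(-1)$ term, and checking that the square commutes.

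For the target I would realize $G$ as a quotient of a generalized Jacobian. Any connected commutative algebraic group over the number field $k$ is dominated by a Rosenlicht--Serre generalized Jacobian: the abelian part $G_\ab$ is a quotient of the Jacobian $\Pic^0(X)$ of a suitable smooth proper curve $X$ (every abelian variety is a quotient of a Jacobian), while the torus part $G_\mul$, the unipotent part $G_\uni$ and, crucially, the extension class of $G$ are produced by equipping $X$ with an effective modulus $Y$ and using that $J(X,Y)=\underline{\Pic}^0_{X_Y}$ is the Albanese of $(X,Y)$; a rational map $X\setminus|Y|\to G$ with modulus bounded by $Y$ whose image generates $G$ induces a surjection $\psi\colon J(X,Y)\to G$. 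Since $\psi$ is an honest surjection of group objects, it automatically respects the three-step filtrations and realizes the (non-split) vector extension recorded by the non-reduced part of $Y$; this is the point where the modulus, rather than a direct sum of the graded pieces, is indispensable.

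With $\psi$ fixed, I would choose $Z$ (disjoint from $Y$) to realize $F$ and to match the differential. The key classical inputs are the surjectivity of the generalized Abel--Jacobi map, i.e.\ that classes of closed points generate $J(X,Y)=\Pic^0(X_Y)$, and its infinitesimal counterpart, that Laurent tails at points away from $|Y|$ surject onto $\Lie J(X,Y)=H^1(X,\Isheaf_Y)$. Fix a generating family of $F=F_{\inf}\times F_\et$. For the \'etale generators I would add to $Z$ closed points whose residue fields realize the finite Galois action on $F_\et$ and whose Abel--Jacobi classes $w_i$ satisfy $\psi(\tau(w_i))=u(f_i)$, which is possible by the surjectivity above, and set $\phi(w_i)=f_i$; for the infinitesimal generators I would prescribe multiplicities at further points so that, via $V(X,Z)\to\Lie J(X,Y)$, the corresponding $\Lie$-level classes map under $\Lie\psi$ to $\Lie u$ of the chosen generators. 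Since in characteristic zero every $\bbQ[\mathrm{Gal}(\overline{k}/k)]$-module is projective (Maschke) and $F_{\inf}$ is a $k$-vector space, the assignment on generators extends to a Galois-equivariant surjection $\phi\colon F(X,Z)\to F$, and the maps $u\phi$ and $\psi\tau$ then agree on generators, hence everywhere. Thus $(\phi,\psi)$ is the required epimorphism.

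I expect the main obstacle to be exactly this last step: realizing the prescribed differential $u$ through the already-fixed $\psi$ while simultaneously keeping $\phi$ surjective, keeping $|Z|\cap|Y|=\emptyset$, and respecting both the Galois structure of $F_\et$ and the infinitesimal structure of $F_{\inf}$. All of this rests on the surjectivity of the ordinary and infinitesimal Abel--Jacobi maps over a number field, together with the characteristic-zero semisimplicity that lets one split off and lift the \'etale part; packaging these compatibly in a single $(X,Y,Z)$, rather than treating $G$, $F_\et$ and $F_{\inf}$ in isolation, is the technical heart of the argument.
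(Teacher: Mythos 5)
Your geometric core --- realizing $G$ as a quotient of a generalized Jacobian $J(X,Y)$ via the Albanese property, realizing $F_{\et}$ by divisor classes supported on $Z$ and $F_{\inf}$ by Laurent tails (the space $V(X,Z)$), and checking $u\circ\phi=\psi\circ\tau$ on generators --- is essentially the paper's Step 1, which carries this out by taking curves $C_i$ inside $G$ passing through the origin and through the points $u_{\et}(e_i)$, with prescribed tangent directions spanning $\Lie(G)$, and then setting $Y=\sum Y_i$ (moduli of the maps $C_i\to G$) and $Z$ supported on the marked points. The genuine gap in your proposal is the descent to the number field $k$. The points $u_{\et}(e_i)\in G(\bar k)$ and the prescribed tangent vectors are in general not defined over $k$, so the curves --- hence $X$, $Y$, $Z$ and the surjection $\psi$ --- produced by this construction exist only over a finite extension $k'/k$. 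Your proposed remedy (closed points with prescribed residue fields plus Maschke) does not close this gap: semisimplicity of $\bbQ[\mathrm{Gal}(\bar k/k)]$ yields abstract splittings of Galois modules, but it neither produces the curve $X$ and the surjection $\psi:J(X,Y)\to G$ over $k$ in the first place, nor produces closed points $w_i$ of $X\setminus|Y|$ whose Abel--Jacobi classes hit the prescribed elements $u(f_i)$ compatibly with the Galois action --- the relevant fibers of $\psi\circ\tau$ are torsors that need not have points over the residue fields you want.

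The paper resolves this differently, and this is where the $\bbQ$-coefficients are genuinely used: after reducing (by splitting off $\ker(u)$, Step 3) to the case where $u_{\inf}$ and $u_{\et}$ are injective, it proves the statement over a finite extension $k'$ and then applies the exact Weil restriction functor $R_{k'/k}$, using that $R_{k'/k}\oLM_{k'}(X,Y,Z)=\oLM_k(X_k,Y_k,Z_k)$ and that $M$ is a direct summand of $R_{k'/k}(M_{k'})$ in $\sM^a_{1,\bbQ}$. You would need this (or an equivalent descent mechanism) to make your argument work over $k$. A secondary, minor point: your opening claim that a morphism in $\sM^a_{1,\bbQ}$ is an epimorphism \emph{precisely} when both components are surjective asserts more than is needed; only the ``if'' direction is used, and that direction is indeed immediate.
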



\begin{rema}
If $M$ is such that $F=0$, then $(X, Y, Z)$ can be choosen as $Z=\emptyset$
Similarly,
if $M$ is such that $G_l=0$, then $(X, Y, Z)$ can be choosen as $Y=\emptyset$.
This will be apparent from the proof given below.
\end{rema}

\begin{proof}
We divide the proof into three steps.

{\it Step 1.} 
(Compare \cite[Chap. VII, \S 2, no. 13, Thm. 4]{SerreGACC}.)
We first prove the proposition 
assuming that $k$ is algebraically closed, 
and that both 
$u_{\inf} : \Lie(F_{\inf}) \to \Lie(G)$ and $u_{\et} : F_{\et} \to G$ 
are injective.
Choose a $\bbZ$-basis $e_1, \dots, e_r$ of $F_{\et}$,
and put $p_i:=u_{\et}(e_i) \in G ~(i=1, \dots, r)$. 
Let $p_0 \in G$ be the identity element.
We take a one-dimensional closed integral subscheme $C_0'$ on $G$ 
that contains $p_0, p_1, \dots, p_r$ as regular points.
Also, choose a $k$-basis $t_1, \dots, t_{s'}$ of $\Lie(F_{\inf})$,
and put $v_i:=u_{\inf}(t_i) \in \Lie(G) ~(i=1, \dots, s')$. 
We extend $v_1, \dots, v_{s'}$ to a $k$-basis 
$v_1, \dots, v_{s'}, \dots, v_s$ of $\Lie(G)$.
For each $i=1, \dots, s$,
we take a one-dimensional closed integral subscheme $C_i'$ on $G$ 
that passes $p_0$ regularly and that has tangent $v_i$ at $p_0$.
For $i=0, 1, \dots, s$, we let $C_i \to C_i'$ be the normalization.
We denote the preimage of $p_j$ in $C_i$ by the same letter $p_j$.
(Here $j=0, \dots, r$ for $i=0$, and $j=0$ for $i=1, \dots, s$.)
Let $X_i$ be the smooth completion of $C_i$.
Let $Y_i$ be a modulus for the morphism $C_i \to C_i' \hookrightarrow G$.
This means that $Y_i$ is an effective divisor supported on $X_i \setminus C_i$
and that $C_i \to G$ factors as 
$C_i \to J(X_i, Y_i) \overset{g_i}{\to} G$.
We also define effective divisors
$Z_0 := (p_0)+(p_1)+\dots+(p_r) \in \Div(X_0)$,
$Z_i :=2(p_0)  \in \Div(X_i) ~(i=1, \dots, s')$,
and $Z_i := 0 ~(i=s'+1, \dots, s)$.
Let $X$ be the disjoint union of $X_0, \dots, X_s$,
and let $Y=Y_0+\dots+Y_s, Z=Z_0+\dots+Z_s$.

By definition, we have
$F(X,Z)_{\et}=F(X_0, Z_0)=\Div_{Z_0}^0(X_0)$,
hence we can define an isomorphism
$F(X, Z)_{\et} \to F_{\et}$ by 
$\sum_{i=1}^r n_i (p_i-p_0) \mapsto \sum_{i=1}^r n_i e_i~(n_1, \dots, n_r \in \bbZ)$.
Also, by definition, we have
$F(X,Z)_{\inf}
=\oplus_{i=1}^{s'} F(X_i, Z_i)=\oplus_{i=1}^{s'} k \cdot v_i$,
hence we can define an isomorphism
$F(X, Z)_{\inf} \to F_{\inf}$ by 
$\sum_{i=1}^{s'} a_i v_i \mapsto \sum_{i=1}^{s'} a_i t_i ~(a_1, \dots, a_{s'} \in k)$.
We have defined an isomorphism $f : F(X, Z) \to F$.
Finally, we define $g : J(X, Y) \to G$ as the sum of 
$g_i : J(X_i, Y_i) \to G$ over $i=0, \dots, s$.
Since the image of 
$\Lie(g_i) : \Lie(J(X_i, Y_i)) \to \Lie(G)$ contains $v_i$,
we find $\Lie(g) : \Lie(J(X, Y)) \to \Lie(G)$ is surjective,
hence $g: J(X, Y) \to G$ itself is also surjective.
It is straightforward to see that $f$ and $g$ define
an epimorphism $\oLM(X, Y, Z) \to M$ in $\sM^a_1$.
(Here we do not need to tensor with $\bbQ$.)

{\it Step 2.} 
We drop the assumption that $k$ is algebraically closed,
but keep the assumption that both $u_{\inf}$ and $u_{\et}$ are injective.
By Step 1, we can find a finite extension $k'/k$
such that the base change of $M$ to $k'$ satisfies the conclusion of the proposition.
The Weil restriction functor 
\[ R_{k'/k} :\sM^a_{1, \bbQ}(k') \to \sM^a_{1, \bbQ}(k), \qquad
~R_{k'/k}([F \to G]) = [R_{k'/k}(F) \to R_{k'/k}(G)]
\]
is exact.
(Here we denote by $\sM^a_{1, \bbQ}(k)$ and $\sM^a_{1, \bbQ}(k')$ 
for the category of Laumon $1$-isomotives over $k$ and over $k'$.)
Moreover, for any $(X, Y, Z) \in \oMCrv_{k'}$
we have $R_{k'/k} \oLM_{k'}(X, Y, Z)=\oLM_{k}(X_k, Y_k, Z_k)$,
where for an $k'$-scheme $S$
we write $S_k$ for the $k$-scheme $S$ with structure morphism
$S \to \Spec k' \to \Spec k$.
(This follows from a general fact that
the Picard functor commutes with base change.)
This proves the proposition in this case.

{\it Step 3.} We prove the proposition in general case.
Let $F_2 := \ker(u),~ M_1:=[F/F_2 \to G], ~M_2:=[F_2 \to 0]$.
Then there is a non-canonical isomorphism 
$M \cong M_1 \oplus M_2$ in $\sM^a_{1, \bbQ}$.
Now we apply the result from Step 2, and we are done.
\end{proof}

\subsection{}\label{sect:start-assuming-k-is-Q}
From now on until the end of the paper,
we suppose that $k$ is a number field.
Note that $\sM_{1,\bbQ}^a$ is a $\bbQ$-linear Abelian category. 
By \propositionref{NoriLength} and  \propositionref{prop:compati},
we obtain a $\bbQ$-linear exact faithful functor 
\begin{equation}\label{eq:phi}
\bLM : \ECMM_1 \to \sM_{1,\bbQ}^a.
\end{equation}
and two invertible natural transformations $\bLM\circ\overline{\bH}^1_\dR\ra\oLM$, $\mathsf{R}_{\dR}\circ\bLM\ra F^a_\dR$. 
The main result of this article is the following:

\begin{theo}\label{MainTheo2}
Suppose that $k$ is a number field.
The functor
$$\bLM : \ECMM_1 \to \sM_{1,\bbQ}^a $$
in 
\eqref{eq:phi} is an equivalence. 
\end{theo}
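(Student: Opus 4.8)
The plan is to obtain \theoremref{MainTheo2} as an application of the criterion of Ayoub and Barbieri-Viale, \propositionref{AyoubBV}. I would apply it to the quiver $\scr D=\oMCrv^\op$, the representation $T=\bH^1_\dR$, the $\bbQ$-linear Abelian category $\sB=\sM^a_{1,\bbQ}$, the representation $S=\oLM$, the faithful exact functor $G=\drR_\dR$ (followed by the forgetful functor $\mod(k)\to\mod(\bbQ)$), and the isomorphism $\beta:\drR_\dR\circ\oLM\to\bH^1_\dR$ of \propositionref{prop:compati}. For this datum the comparison functor produced by \theoremref{NoriLength} is exactly the functor $\bLM$ of \eqref{eq:phi}; hence it suffices to verify the three hypotheses \enumref{CondA}, \enumref{CondB} and \enumref{CondC} of \propositionref{AyoubBV}.

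Conditions \enumref{CondA} and \enumref{CondB} are the soft ones. Condition \enumref{CondB} is precisely \propositionref{QuotProp}: every Laumon $1$-isomotive is a quotient of $\oLM(X,Y,Z)=\LM(X,Y,Z)$ for a suitable object of $\oMCrv$. For condition \enumref{CondA} I would take $p\sqcup q$ to be the disjoint union $(X_1\sqcup X_2,\,Y_1+Y_2,\,Z_1+Z_2)$ and use that both the generalized Jacobian and the formal group are additive for disjoint unions, whence $S(p\sqcup q)$ is the biproduct of $S(p)$ and $S(q)$, the component inclusions providing the splitting required in \enumref{CondA}. The only subtlety is variance: the inclusions are morphisms for the \emph{covariant} functor $\uLM$ on $\uMCrv$, and one passes between the two models using the isomorphism $\Sw:\uMCrv\xrightarrow{\sim}\oMCrv$ together with the self-dualities of \propositionref{prop:dual-hdr} and Cartier duality, under which proving that $\bLM$ is an equivalence is equivalent to the same assertion for the covariant datum.

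The essential point is condition \enumref{CondC}. Given a morphism $\oLM(X,Y,Z)\to M$ in $\sM^a_{1,\bbQ}$, I must exhibit a finite subquiver $\scr E$ containing $(X,Y,Z)$ over which the kernel of
\[
\bH^1_\dR(X,Y,Z)=\drR_\dR\,\oLM(X,Y,Z)\longrightarrow\drR_\dR(M)
\]
is a sub-$\End(\bH^1_\dR|_\scr E)$-module. My strategy is to exploit the \emph{functorial} decomposition of \propositionref{CanDec},
\[
\bH^1_\dR(X,Y,Z)\cong\bH^1_\dR(X,Y_\red,Z_\red)\oplus U(X,Y)\oplus V(X,Z),
\]
and match it with the de Rham realization of the filtration $\fil^\bullet_\sM$ on $M$, for which \eqref{eq:can-spl4} yields $\drR_\dR(M)\cong\drR_\dR(M_\Del)\oplus\Lie(F_{\mathrm{inf}})\oplus\Lie(G_\uni)$. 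Because the decomposition of \propositionref{CanDec} is natural in $\oMCrv$, its three projectors commute with every $\bH^1_\dR(m)$ and therefore block-diagonalise $\End(\bH^1_\dR|_\scr E)$; this reduces \enumref{CondC} to proving the corresponding stability on each of the three summands separately.

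On the Deligne summand $\bH^1_\dR(X,Y_\red,Z_\red)$ the assertion is inherited from the theorem of Ayoub and Barbieri-Viale: through the fully faithful embedding $I_{\ECM}:\ECM_1^\dR\to\ECMM_1$ of \propositionref{EmbeddingMotives} this summand is identified with the cohomological motive of the curve $(X\setminus Z_\red,Y_\red)$, for which condition \enumref{CondC} is guaranteed by \theoremref{EquivDel}. It remains to treat the two new summands $U(X,Y)$ and $V(X,Z)$, which under $\oLM$ correspond respectively to $\Lie J(X,Y)_{\uni}$ and to the infinitesimal part $F(X,Z)_{\mathrm{inf}}=V(X,Z)\otimes_k\hat{\bbG}_a$. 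Since a morphism of Laumon $1$-motives preserves $\fil^\bullet_\sM$, the induced maps on these summands are governed by the explicit linear algebra of $U$ and $V$, and I would realise the kernels $K^U\subseteq U(X,Y)$ and $K^V\subseteq V(X,Z)$ as images of maps coming from further objects of $\oMCrv$ (obtained by modifying the multiplicities of $Y$ and $Z$), which once adjoined to $\scr E$ force the kernels to be stable. I expect this to be the main obstacle: producing, for the infinitesimal and unipotent parts, a \emph{finite} subquiver $\scr E$ whose endomorphisms already pin down these kernels is exactly the step with no counterpart in the homotopy invariant theory of \theoremref{EquivDel}, and it is where the modulus structure must be used most carefully.
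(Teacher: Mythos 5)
Your overall strategy coincides with the paper's: apply \propositionref{AyoubBV} to $T=\bH^1_\dR$, $S=\oLM$, $G=\drR_\dR$ and $\beta$ from \propositionref{prop:compati}, with condition \enumref{CondB} supplied by \propositionref{QuotProp} and condition \enumref{CondA} by disjoint unions. The problem is that your verification of condition \enumref{CondC} stops exactly where the real work begins. On the summands $U(X,Y)$ and $V(X,Z)$ you say you ``would realise the kernels as images of maps coming from further objects of $\oMCrv$'' and you yourself flag this as ``the main obstacle'' --- but no argument is given, and it is not at all clear that modifying multiplicities of $Y$ and $Z$ produces enough morphisms to cut out an \emph{arbitrary} $\bbQ$-subspace of $U(X,Y)$ as an image. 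What is actually needed, and what the paper proves in \propositionref{Equivuni} (and its dual \propositionref{Equivinf}), is the much stronger statement that after adjoining a suitable finite subquiver the algebra $\End(T|_{\scr E})$ acts on these summands through $k$ itself, so that \emph{every} $k$-subspace is automatically stable. This is achieved by an explicit computation on the quiver $\oMPo$ with vertices $(\bbP^1,n[\infty],\emptyset)$, using the automorphisms $t\mapsto at$ and $t\mapsto t-1$ together with \lemmaref{lem:elementary} (that $k$ is generated over $\bbQ$ by $\mu$-th powers) to show $\End_\bbQ(T|_{\scr E})=k$; this identifies the unipotent part of $\ECMM_1$ with $\mod(k)\simeq\sM^{\uni}_{1,\bbQ}$. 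Nothing in your sketch substitutes for this computation, so as written the proof of the theorem is incomplete at its crucial point.

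A secondary issue: your reduction of \enumref{CondC} to the three summands rests on the claim that the projectors of \eqref{eq:hdr-dec} ``block-diagonalise'' $\End(\bH^1_\dR|_{\scr E})$. Functoriality of the decomposition only guarantees that these projectors define an element \emph{of} $\End(T|_{\scr E})$; a general element of $\End(T|_{\scr E})$ need not commute with them, so it may mix the summands, and stability of each $K_\Del$, $K^U$, $K^V$ under the diagonal blocks does not by itself give stability of $\Ker(v)$. The paper sidesteps this by working with the two-step \emph{filtration} $\fil^\bullet$ lifted to $\ECMM_1$ in \pararef{sect:filtration} (via \propositionref{Fonc3} and \propositionref{NatTransf}): all the comparison maps between $\fil^2(A)$, $\fil^1(A)$, $A$, $\Gr^0(A)$, $\Gr^1(A)$ are then morphisms in $\ECMM_1$, hence automatically $\End(T|_{\scr E})$-linear after applying $F^a_\dR$, and the conclusion follows from a diagram chase once the kernels on the graded pieces are known to be stable (by \propositionref{EquivDel2}, \propositionref{Equivuni}, \propositionref{Equivinf}). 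You should either adopt that filtration argument or justify why the off-diagonal blocks cause no harm.
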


\section{Filtration on Nori motives with modulus}\label{sect:filtration}
We keep assuming that $k$ is a number field.
In this section, we construct on every object of $\ECMM_{1}$ a two steps filtration that mirrors the one on   Laumon $1$-motives defined in \pararef{sect:def-grM}.

\subsection{}\label{Fil1} 
Consider the morphism of quivers
\begin{align}
\oMCrv  & \ra \oMCrv \label{MorMCrvD}\\
(X,Y,Z) & \mapsto (X,Y,Z_\red).\notag
\end{align}
Note that if a morphism $f : X \to X'$ of $k$-curves
defines 
a morphism $(X, Y, Z) \to (X', Y', Z')$ in $\oMCrv$,
then it also defines 
a morphism $(X, Y, Z_\red) \to (X', Y', Z_\red')$ in $\oMCrv$,
by our definition of $\oMCrv$ (see \S \ref{sect:def-MCrv}).



If  $(X,Y,Z)$ is a $k$-curve with modulus, let us observe that by construction $\fil^1_\sM\LM(X,Y,Z)=\LM(X,Y,Z_\red)$. 
Hence the square
\begin{equation}\label{squareFil1}
\xymatrix{{\oMCrv^\op}\ar[r]^-{\LM}\ar[d]_-{(-,-,-_\red)} & {\sM^a_{1,\bbQ}}\ar[d]^{\fil_\sM^1}\\
{\oMCrv^\op}\ar[r]^-{\LM} & {\sM^a_{1,\bbQ}}}
\end{equation}
commutes and \propositionref{Fonc3} shows the existence of a $\bbQ$-linear exact functor $\fil^1:\ECMM_{1}\ra\ECMM_{1}$ and two invertible natural transformations
$$\rho:\fil^1_\sM\circ \bLM\ra \bLM\circ \fil^1\qquad \varrho: \fil^1\circ \overline{\bH}^1_\dR\ra \overline{\bH}^1_\dR\circ(-,-,-_\red) $$
such that the corresponding diagram as in \eqref{Dia1Fonc3} is commutative.

Let us now show that there exists a natural transformation $\fil^1\ra \Id$ which is a monomorphism for every object in $\ECMM_{1}$.
Let $(X,Y,Z)$ be a $k$-curve with modulus. Since $Z_\red\leqslant Z$, the identity of $X$ defines an edge $(X,Y,Z)\ra (X,Y,Z_\red)$ that provides a natural transformation
$$\iota:(-,-,-_\red)\ra \Id $$
of functors from $\oMCrv^\op$ with values in $\oMCrv^\op$. Note that this transformation induces the monomorphism $\fil^1\LM(X,Y,Z)\ra\LM(X,Y,Z)$ in $\sM^a_{1,\bbQ}$ and that the square 
$$\xymatrix{{\fil^1_\sM\circ \LM}\ar@{=}[r]\ar[d]_-{\iota_\sM\vc\LM} & {\LM\circ (-,-,-_\red)}\ar[d]^-{\LM\vc\iota}\\
{\LM}\ar@{=}[r] & {\LM}} $$
is commutative. We may therefore apply \propositionref{NatTransf} to obtain a natural tranformation $\overline{\iota}:\fil^1\ra\Id $ that makes the squares
\begin{equation}\label{CompFil1}
\xymatrix{{\fil^1\circ \overline{\bH}^1_\dR}\ar[r]^-{\varrho}\ar[d]^-{\overline{\iota}\vc\overline{\bH}^1_\dR} & {\overline{\bH}^1_\dR\circ (-,-,-_\red)}\ar[d]^-{\overline{\bH}^1_\dR\vc\iota}\\
 {\overline{\bH}^1_\dR}\ar@{=}[r] & {\overline{\bH}^1_\dR}} 
 \qquad
 \xymatrix{{\fil^1_\sM\circ\bLM}\ar[r]^-{\rho}\ar[d]^-{\iota\vc\bLM} & {\bLM\circ\fil ^1}\ar[d]^-{\bLM\vc\overline{\iota}}\\
 {\bLM}\ar@{=}[r] & {\bLM}}
 \end{equation}
commutative. Note that by \remarkref{RemaMono}, for every object $A$ in $\ECMM_{1}$ the morphism $\overline{\iota}:\fil^1A\ra A$ is a monomorphism.

\subsection{}\label{Fil2} So far we have constructed the first step of the filtration. Let us now construct the second one.
Let $\scr D$ be the full subquiver of $\oMCrv$ with vertices the $k$-curves with modulus $(X,Y,Z)$ such that $Z$ is \emph{reduced}.

We denote by $\sM_{1,\bbQ}^{\inf=0}$ the kernel of the exact functor $\Gr^0_\sM$. This is the category of Laumon 1-isomotives without infinitesimal part and by definition it is the full subcategory of $\sM^a_{1,\bbQ}$ of objects $M$ such that $\Gr_\sM^0(M)=0$ that is such that $\iota_\sM:\fil^1_\sM(M)\ra M$ is an isomorphism. Similarly we denote by $\ECMM^{\inf=0}_{1}$ the kernel of the exact functor
$$\Gr^0:\ECMM_{1}\ra\ECMM_{1} $$
constructed in \pararef{Fil1}. The compatibility given in \eqref{CompFil1} ensures that the functor \eqref{eq:phi} induces an exact functor 
$$\bLM:\ECMM_{1}^{\inf=0}\ra\sM_{1, \bbQ}^{\inf=0}.$$
\begin{prop}\label{univinf=0}
The universal $\bbQ$-linear Abelian category associated with the representation
$$\bH^1_\dR:\scr D^\op\ra\mod(\bbQ)$$
is equivalent to $\ECMM^{\inf=0}_{1}$.
\end{prop}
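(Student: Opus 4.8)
The plan is to identify $\ECMM_1^{\inf=0}$ with the universal category of $\bH^1_\dR|_{\scr D^\op}$ by comparing the two Nori constructions attached to the full subquiver $\scr D\subseteq\oMCrv$ and to its ambient quiver. Write $\sA_{\scr D}$ for the universal category of $\bH^1_\dR|_{\scr D^\op}$. First I would produce a comparison functor: the inclusion $D:\scr D^\op\hookrightarrow\oMCrv^\op$, the identity of $\mod(\bbQ)$ and the tautological equality $\bH^1_\dR|_{\scr D^\op}=\bH^1_\dR\circ D$ satisfy the hypotheses of \propositionref{Fonc3}, yielding an exact faithful $\bbQ$-linear functor $\Psi:\sA_{\scr D}\to\ECMM_1$ compatible with the canonical representations and the forgetful functors. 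For $(X,Y,Z)\in\scr D$ one has $Z=Z_\red$, so the monomorphism $\overline\iota$ of \pararef{Fil1} is an isomorphism on $\overline{\bH}^1_\dR(X,Y,Z)$ and hence $\Gr^0\overline{\bH}^1_\dR(X,Y,Z)=0$; since $\ECMM_1^{\inf=0}=\Ker(\Gr^0)$ is closed under subquotients and finite direct sums and every object of $\sA_{\scr D}$ is a subquotient of a finite direct sum of generators by \cite[Proposition 7.1.16]{BookNori}, $\Psi$ corestricts to an exact faithful functor $\overline\Psi:\sA_{\scr D}\to\ECMM_1^{\inf=0}$.

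The proposition then amounts to a subquiver principle: $\overline\Psi$ should be an equivalence onto the full subcategory of $\ECMM_1$ generated by the reduced-modulus generators under subobjects, quotients and finite direct sums. I would verify two inputs. The first is that this generated subcategory is all of $\ECMM_1^{\inf=0}$. Here I use the exact idempotent retraction $\fil^1:\ECMM_1\to\ECMM_1$, whose essential image is exactly $\ECMM_1^{\inf=0}$ and which satisfies $\fil^1\overline{\bH}^1_\dR(X,Y,Z)\cong\overline{\bH}^1_\dR(X,Y,Z_\red)$: starting from a presentation of an arbitrary $A\in\ECMM_1^{\inf=0}$ as a subquotient of a finite sum of generators (again \cite[Proposition 7.1.16]{BookNori}) and applying the exact functor $\fil^1$ together with $\fil^1 A\cong A$, one rewrites $A$ as a subquotient of reduced-modulus generators.

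The second input is full faithfulness of $\overline\Psi$, and this is where the geometry of \propositionref{CanDec} enters. Concretely, for $p,q\in\scr D$ a morphism $\overline{\bH}^1_\dR(p)\to\overline{\bH}^1_\dR(q)$ must be the same whether computed in $\sA_{\scr D}$ or in $\ECMM_1$; equivalently, a $\bbQ$-linear map $\bH^1_\dR(p)\to\bH^1_\dR(q)$ commuting with $\End(\bH^1_\dR|_{\scr E})$ for some finite $\scr E\subseteq\oMCrv^\op$ must already commute with $\End(\bH^1_\dR|_{\scr E'})$ for a finite $\scr E'\subseteq\scr D^\op$. The functorial decomposition $\bH^1_\dR(X,Y,Z)\cong\bH^1_\dR(X,Y,Z_\red)\oplus V(X,Z)$ of \propositionref{CanDec}, in which $V(X,Z)=0$ for $Z$ reduced, exhibits the non-reduced contribution $V(X,Z)$ as a canonical direct complement respected by all these endomorphism algebras; this lets one discard the non-reduced objects of $\scr E$ without weakening the constraints seen by $\bH^1_\dR(p)$ and $\bH^1_\dR(q)$, and thereby descend the compatibility from $\oMCrv^\op$ to $\scr D^\op$.

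The main obstacle is precisely this descent of endomorphism algebras, i.e. the full faithfulness of $\overline\Psi$: the care needed is to convert the functoriality of \propositionref{CanDec} into the assertion that the endomorphisms coming from non-reduced moduli impose no new relations among reduced-modulus generators. Once full faithfulness and the generation statement above are in place, $\overline\Psi$ is an equivalence of $\bbQ$-linear Abelian categories onto $\ECMM_1^{\inf=0}$, which is the assertion of the proposition. (Alternatively, one may package these two inputs as the hypotheses of the criterion \propositionref{AyoubBV} applied to $\bH^1_\dR|_{\scr D^\op}$ with target $\ECMM_1^{\inf=0}$, in which case \enumref{CondA} is the stability of $\scr D$ under disjoint union and the functorial splitting of \propositionref{CanDec} again supplies \enumref{CondC}; the same descent remains the crux.)
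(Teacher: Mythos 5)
Your opening move---obtaining an exact faithful $\overline\Psi:\sA_{\scr D}\to\ECMM_1$ from \propositionref{Fonc3} applied to the inclusion of quivers, and corestricting it to $\ECMM_1^{\inf=0}=\Ker(\Gr^0)$ because the generators land there---is exactly the paper's construction of $I_{\scr C}$. After that, however, the argument has two genuine gaps. The serious one is full faithfulness: you reduce it to the claim that a map commuting with $\End(\bH^1_\dR|_{\scr E})$ for some finite $\scr E\subseteq\oMCrv$ already commutes with $\End(\bH^1_\dR|_{\scr E'})$ for some finite $\scr E'\subseteq\scr D$, i.e.\ essentially to the surjectivity of the restriction maps $\End(\bH^1_\dR|_{\scr E})\to\End(\bH^1_\dR|_{\scr E'})$, and you offer only the functorial splitting of \propositionref{CanDec} as evidence. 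To lift an element of $\End(\bH^1_\dR|_{\scr E'})$ one must choose its action on the complements $U(X,Y)\oplus V(X,Z)$ for every non-reduced vertex of $\scr E$ compatibly with \emph{all} edges of $\scr E$, including edges between two non-reduced vertices; \propositionref{CanDec} by itself does not produce such a choice, and you acknowledge the point without resolving it. The second gap is logical: full faithfulness plus the statement that every object of $\ECMM_1^{\inf=0}$ is a \emph{subquotient} of reduced-modulus generators does not imply essential surjectivity, because the essential image of a fully faithful exact functor need not be closed under subobjects (e.g.\ $V\mapsto(V\xra{\id}V)$ from $\mod(k)$ into representations of the quiver $\bullet\to\bullet$). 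One could repair this by using the comodule description of $\sA_{\scr D}$ (a full inclusion of comodule categories corresponds to a subcoalgebra, whose comodules are closed under subquotients), but as written the inference is invalid; the same issue affects the suggested appeal to \propositionref{AyoubBV}, whose condition \enumref{CondB} requires a genuine quotient presentation and whose condition \enumref{CondC} is again the unproved descent.

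The paper sidesteps both problems with a different device. The quiver morphism $(X,Y,Z)\mapsto(X,Y,Z_\red)$ of \eqref{MorMCrvD} takes values in $\scr D$, so the square \eqref{squareFil1} refines to one with bottom row $\scr D^\op\to\sM^{\inf=0}_{1,\bbQ}$; \propositionref{Fonc3} then yields an exact functor $\fil^1_{\scr C}:\ECMM_1\to\sA_{\scr D}$ together with an isomorphism $I_{\scr C}\circ\fil^1_{\scr C}\cong\fil^1$. Since $\fil^1\circ I_{\inf=0}\cong\Id$ on $\ECMM_1^{\inf=0}$ (and the composite $\fil^1_{\scr C}\circ I_{\inf=0}\circ I_{\scr C}$ is identified with the identity of $\sA_{\scr D}$ by the uniqueness part of the universal property, the underlying quiver endomorphism being the identity on $\scr D$), the functor $\fil^1_{\scr C}\circ I_{\inf=0}$ is a quasi-inverse of $I_{\scr C}$. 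In other words, the retraction $\fil^1$ that you only use for the generation statement is the whole proof: it supplies the two-sided inverse directly and makes any computation or descent of endomorphism algebras unnecessary. If you wish to keep your route, you would in effect have to prove that $\eusm C_{\bH^1_\dR|_{\scr D}}\to\eusm C_{\bH^1_\dR}$ is injective, which is most easily done by exhibiting precisely this retraction.
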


\begin{proof}
Let us denote by $\scr C$ the associated category and by
$\scr D^{\op}\xra{\overline{\bH}^1_\scr C}\scr C\xra{F_\scr C}\mod(\bbQ)$
the canonical factorization of the restriction of ${\bH}^1_\dR$ to $\scr D^\op$. Since the restriction of $\overline{\bH}^1_\dR$ to $\scr D^\op$ takes its values in the Abelian subcategory $\ECMM^{\inf=0}_{1}$, the universal property of Nori's category ensures the existence of a $\bbQ$-linear exact faithful functor
$$I_\scr C: \scr C\ra\ECMM_{1}^{\inf=0}$$ 
and two invertible natural transformations $\gamma:I_\scr C\circ \overline{\bH}^1_\scr C\ra \overline{\bH}^1_\dR$ and  $\delta:F^a_\dR\circ I_\scr C\ra F_\scr C$ such that the square
$$\xymatrix{{F^a_\dR\circ I_\scr C\circ \overline{\bH}^1_\scr C}\ar[r]^-{F^a_\dR\vc \gamma}\ar[d]^-{\delta\vc \overline{\bH}^1_\scr C} & {F^a_\dR\circ \overline{\bH}^1_\dR}\ar@{=}[d]\\
{F_{\scr C}\circ\overline{\bH}^1_\scr C}\ar@{=}[r] & {\bH^1_\dR}} $$
is commutative. To construct a quasi-inverse to the functor $I_\scr C$ let us go back to the construction of $\fil^1$ in \pararef{Fil1}. Observe that \eqref{MorMCrvD} takes its values in $\scr D$ and that the square \eqref{squareFil1} can be refined in a square
$$\xymatrix{{\oMCrv^\op}\ar[r]^-{\LM}\ar[d]_-{(-,-,-_\red)} & {\sM^a_{1,\bbQ}}\ar[d]^{\fil_\sM^1}\\
{\scr D^\op}\ar[r]^-{\LM|_\scr D} & {\sM^{\inf=0}_{1,\bbQ}.}}$$
By \propositionref{Fonc3} and \propositionref{NatTransf}, this shows the existence of a $\bbQ$-linear exact functor $\fil^1_\scr C:\ECMM_{1}\ra \scr C$ and an invertible natural tranformation $I_\scr C\circ\fil^1_\scr C\ra\fil^1$. 

Let us denote by $I_{\inf=0}$ the inclusion functor of $\ECMM_{1}^{\inf=0}$ into $\ECMM_{1}$. Since $\fil^1\circ I_{\inf=0}$ is isomorphic to the identity, the composition $I_\scr C\circ \fil^1_\scr C\circ I_{\inf=0} $ is isomorphic to the identity. This shows that the faithful functor $I_\scr C$ is an equivalence and that $ \fil^1_\scr C\circ I_{\inf=0} $ is a quasi-inverse.
\end{proof}

Now consider the morphism of quivers
\begin{align*}
\scr D & \ra \scr D\\
(X,Y,Z) & \mapsto (X,Y_\red,Z).
\end{align*}
(This is indeed a morphism because
if $f: X \to X'$ is a morphism of $k$-curves
and if effective divisors $Y \subset X$ and $Y' \subset X'$
satisfy $Y \le f^*Y'$,
then we have $Y_\red \le (f^*Y')_\red \le f^*(Y_\red')$.)
Since the square
$$\xymatrix{{\scr D^\op}\ar[r]^-{\LM}\ar[d]_-{(-,-_\red,-)} & {\sM^{\inf=0}_{1,\bbQ}}\ar[d]^-{\Gr^1_\sM}\\
{\scr D^\op}\ar[r]^-{\LM} & {\sM^{\inf=0}_{1,\bbQ}}} $$
is commutative, \propositionref{Fonc3} and \propositionref{univinf=0} show the existence of a $\bbQ$-linear exact functor
\footnote{Note that the notation might be misleading: $\Gr^1$ is not yet the graded pieces associated to a filtration.}
$\Gr^1:\ECMM^{\inf=0}_{1}\ra\ECMM_{1}^{\inf=0}$ and two invertible natural transformations
$$\rho:\Gr^1_\sM\circ \bLM\ra \bLM\circ \Gr^1\qquad \varrho: \Gr^1\circ \overline{\bH}^1_\dR\ra \overline{\bH}^1_\dR\circ(-,-_\red,-) $$
such that the corresponding diagram as in \eqref{Dia1Fonc3} is commutative.

Note that, for every Laumon 1-isomotives $M$, there is a canonical epimorphism $\fil^1_\sM(M)\ra\Gr^1_\sM(M)$. In particular, if $M$ is without infinitesimal part, there is a canonical epimorphism
$$\pi_\sM:M\ra \Gr^1_\sM(M).$$
Since $Y_\red\leqslant Y$, the identity of $X$ induces an edge from $(X,Y_\red,Z)$ to $(X,Y,Z)$ in $\scr D$.
\begin{rema}
Note that if $Y$ is not reduced, then the identity of $X$ does not define an edge from $(X,Y,Z)$ to $(X,Y_\red,Z_\red)$ in $\oMCrv$. 
This is the main reason for introducing the subquiver $\scr D$.
\end{rema}

This provides a natural transformation
$$\pi_{\scr D}:\Id_{\scr D^\op}\ra (-,-_\red,-) $$
of functors from $\scr D^\op$ with values in $\scr D^\op$. Note that the square
$$\xymatrix{{\LM|_{\scr D}}\ar@{=}[r]\ar[d]_-{\pi_\sM\vc\LM|_\scr D} & {\LM|_\scr D}\ar[d]^-{\LM|_\scr D\vc\pi_{\scr D}}\\
{\Gr^1_\sM\circ\LM|_\scr D}\ar@{=}[r] & {\LM|_\scr D\circ (-,-_\red,-)}} $$
commutes. We may therefore apply \propositionref{NatTransf} to obtain a natural tranformation $\overline{\pi}:\Id\ra\Gr^1$ that makes the squares
$$\xymatrix{{ \overline{\bH}^1_\dR}\ar@{=}[r]\ar[d]^-{\overline{\pi}\vc\overline{\bH}^1_\dR} & {\overline{\bH}^1_\dR}\ar[d]^-{\overline{\bH}^1_\dR\vc\pi_\scr D}\\
 {\Gr^1\circ\overline{\bH}^1_\dR}\ar[r]^-{\varrho} & {\overline{\bH}^1_\dR\circ  (-,-_\red,-)}} 
 \qquad
 \xymatrix{{\bLM}\ar@{=}[r]\ar[d]^-{\pi_\sM\vc\bLM} & {\bLM}\ar[d]^-{\bLM\vc\overline{\pi}}\\
 {\Gr^1_\sM\circ\bLM}\ar[r]^-{\rho} & {\bLM\circ \Gr^1}}
 $$
commutative. Note that in the above squares, all natural transformations are between functors on $\scr D^\op$ or $\ECMM_{1}^{\inf=0}$.  By \remarkref{RemaMono}, for every object $A$ in $\ECMM^{\inf=0}_{1}$ the morphism $\overline{\pi}:A\ra \Gr^1(A)$ is an epimorphism.

Let $A$ be an object in $\ECMM_{1}$. Then $\fil^1(A)$ belongs to $\ECMM_{1}^{\inf=0}$ and we set
\begin{equation}\label{DefFil2}
\fil^2(A):=\Ker\left[\fil^1(A)\ra\Gr^1(\fil^1(A))\right].
\end{equation}
Note that by definition $\Gr^1(A):=\Gr^1(\fil^1(A))$.

\section{Proof of the main theorem}\label{sect:pf-main-thm}

In this section, we assume that $k$ is a number field.
We complete the proof of  \theoremref{MainTheo2}.

\subsection{}
Recall from \propositionref{EmbeddingMotives}
that we have a fully faithful functor 
$I_{\ECM}: \ECM_1^\dR \to \ECMM_1$.
The composition of $I_{\ECM}$ with
$\bLM:\ECMM_{1}\ra\sM^a_{1,\bbQ}$
factors through the category $\sM_{1,\bbQ}$ of Deligne $1$-isomotives.
This induces a functor
$\ECM_1^\dR \to \sM_{1,\bbQ}$ by universality.

\begin{prop}\label{EquivDel2}
The functor $\ECM_1^\dR \to \sM_{1,\bbQ}$ is an equivalence.
%
%
%
\end{prop}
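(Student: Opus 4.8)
The plan is to realize the functor of the statement as a composite of equivalences already available in the text, and then to pin this identification down by Nori's universal property. Write $\Phi : \ECM_1^\dR \to \sM_{1,\bbQ}$ for the functor in question. By its construction (through $I_{\ECM}$ and $\bLM$, using the quiver map $\overline{(-)}$ of \eqref{eq:crv-to-omcrv}) it is induced by universality from the representation $\Crv^\op \to \sM_{1,\bbQ}$, $(C,Y) \mapsto \oLM(\overline{C},Y,C_\infty) = \LM(\overline{C},Y,C_\infty)$. Note this lands in $\sM_{1,\bbQ}$ precisely because $Y$ and $C_\infty$ are reduced, so that $U(\overline{C},Y)=V(\overline{C},C_\infty)=0$ and hence $\LM(\overline{C},Y,C_\infty)$ has neither infinitesimal nor unipotent part. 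Moreover $\Phi$ is exact and faithful and satisfies $\drR_\dR \circ \Phi \simeq F_\dR$, the de Rham realization compatibility inherited from the defining property of $\bLM$ together with $F^a_\dR \circ I_{\ECM} \simeq F_\dR$.

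Next I would assemble the abstract equivalence. By \propositionref{prop:equiv-betti-dr} we have $\ECM_1^\dR \simeq \ECM_1$; by \eqref{eq:anti-eq-ehm-ecm} the $\bbQ$-linear dual gives an anti-equivalence between $\EHM_1^\bbQ$ and $\ECM_1$; the theorem of Ayoub and Barbieri-Viale (\theoremref{EquivDel}) gives $\EHM_1^\bbQ \simeq \sM_{1,\bbQ}$; and Cartier duality gives an anti-equivalence of $\sM_{1,\bbQ}$ with itself. Composing these four equivalences produces an equivalence $\ECM_1^\dR \xra{\sim} \sM_{1,\bbQ}$. Tracing a generator through the chain, $\overline{H}^1_\dR(C,Y)$ is carried to $\overline{H}^1_\Betti(C,Y)$, then to $\overline{H}_1^\Betti(C,Y)$ regarded in $(\EHM_1^\bbQ)^\op$, then (by \remarkref{rem:a-bv-functor}, which describes the A--BV functor via the quiver map \eqref{eq:crv-to-omcrv2} and $\uLM$) to $\LM(\overline{C},C_\infty,Y)$, and finally --- by the Cartier-duality remark following \propositionref{prop:compati}, which identifies $\LM(\overline{C},C_\infty,Y)$ with the Cartier dual of $\LM(\overline{C},Y,C_\infty)$ --- to $\LM(\overline{C},Y,C_\infty)$. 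Thus the composite agrees with $\Phi$ on the objects $\overline{H}^1_\dR(C,Y)$ that generate $\ECM_1^\dR$.

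To promote this agreement on generators to an isomorphism of functors, I would invoke the uniqueness statement for functors out of Nori's universal category (\cite[Proposition 6.7]{IvorraPNM}). Both $\Phi$ and the composite equivalence are $\bbQ$-linear, exact and faithful, and both are compatible with the appropriate realizations: the comparison isomorphism of \lemmaref{IsoPer} matches the de Rham realization governing $\Phi$ with the Betti realization through which the A--BV equivalence is normalized. Since the two functors arise from the same representation with identical realization data, they are canonically isomorphic, and consequently $\Phi$ is an equivalence.

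I expect the main obstacle to be precisely this last bookkeeping. One must reconcile the contravariant $\oLM$ with the covariant $\uLM$ --- a swap encoded by Cartier duality together with $(X,Y,Z)\mapsto(X,Z,Y)$ --- at the same time as passing between de Rham and Betti realizations via \lemmaref{IsoPer}, and then check that the two anti-equivalences (duality on motives and Cartier duality) compose to the honestly covariant functor $\Phi$ rather than to its dual. Ensuring that the realization compatibilities commute on the nose, so that \cite[Proposition 6.7]{IvorraPNM} applies, is the delicate point; everything else is formal.
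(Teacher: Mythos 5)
Your argument is correct and is essentially the paper's own proof: the paper establishes the equivalence by composing exactly the same four ingredients --- \propositionref{prop:equiv-betti-dr}, the anti-equivalence \eqref{eq:anti-eq-ehm-ecm}, \theoremref{EquivDel}, and Cartier duality on $\sM_{1,\bbQ}$. The only difference is that you spell out the identification of this composite with the functor of the statement via the uniqueness clause of Nori's universal property, a step the paper leaves implicit.
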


\begin{proof}
This follows from \eqref{eq:anti-eq-ehm-ecm},
 \theoremref{EquivDel}, \propositionref{prop:equiv-betti-dr},
and the Cartier duality for $\sM_{1,\bbQ}$.
%
\end{proof}

Let $\ECMM_{1}^{\uni} $ be the intersection of the kernel of the exact functors $\Gr^0$ and $\Gr^1$ constructed in \pararef{Fil1} and \pararef{Fil2}. An object $A$ in $\ECMM_{1} $ belongs to the full subcategory  $\ECMM_{1}^{\uni}$ if and only if the canonical monomorphism $\fil^2(A)\ra A$ is an isomorphism. Since the functor $\LM$ is compatible with the filtration, it induces a $\bbQ$-linear exact faithful functor
(see \pararef{sect:deligne-1-motives})
$$\LM:\ECMM_{1}^{\uni}\ra\sM_{1,\bbQ}^{\uni}.$$
Note that $\sM_{1,\bbQ}^{\uni}$ is simply the category of unipotent commutative algebraic groups over $k$ and that 
the functor $\sM_{1,\bbQ}^{\uni} \to \mod(k)$ given by
the restriction of the de Rham realization $R_\dR$  is nothing but the functor that associates with a unipotent commutative algebraic $k$-group its Lie algebra and is therefore an equivalence.

\begin{prop}\label{Equivuni}
The functor
$$\LM:\ECMM_{1}^{\uni}\ra\sM_{1,\bbQ}^{\uni}$$
is an equivalence.
\end{prop}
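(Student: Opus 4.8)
The plan is to reduce the statement to a computation inside $\mod(k)$ and then invoke the criterion of \name{Ayoub} and \name{Barbieri-Viale}. Since the de Rham realization restricts to an equivalence $\drR_\dR:\sM_{1,\bbQ}^{\uni}\xra{\sim}\mod(k)$ (on vector groups it is the Lie algebra functor), and since $\drR_\dR\circ\bLM\cong F^a_\dR$ with $F^a_\dR$ factoring through $\mod(k)$ while $\bLM$ is already faithful and exact, the functor $\bLM$ is an equivalence if and only if $F^a_\dR:\ECMM_1^{\uni}\to\mod(k)$ is one. As we are in characteristic zero, every unipotent commutative $k$-group is a vector group, so the target $\mod(k)\simeq\sM_{1,\bbQ}^{\uni}$ is semisimple with a single simple object — a structure the argument below will transport to $\ECMM_1^{\uni}$.

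For the geometric input I would single out, for a $k$-rational point $P$ of $\bbP^1$, the objects $\mathbf{1}(n):=\overline{\bH}^1_\dR(\bbP^1,nP,\emptyset)$. Each lies in $\ECMM_1^{\uni}$: indeed $Z=\emptyset$ kills the infinitesimal part while $\bH^1_\dR(\bbP^1,P,\emptyset)=0$ kills the Deligne part, so $\Gr^0\mathbf{1}(n)=\Gr^1\mathbf{1}(n)=0$. By \propositionref{prop:uv} its realization is $F^a_\dR(\mathbf{1}(n))=U(\bbP^1,nP)$, a $k$-vector space of dimension $n-1$, and $\oLM(\bbP^1,nP,\emptyset)=[0\to J(\bbP^1,nP)]$ is the vector group with $\Lie J(\bbP^1,nP)\cong U(\bbP^1,nP)$ by \eqref{eq:lie-jacuni}. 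Three facts make these objects sufficient. First, the automorphism $z\mapsto\lambda z$ of $(\bbP^1,2P,\emptyset)$ is a morphism in $\oMCrv$ realizing multiplication by $\lambda$ on the cotangent line $U(\bbP^1,2P)$, so that $\End_{\ECMM_1^{\uni}}(\mathbf{1}(2))\to k$ is surjective, hence bijective by faithfulness. Second, by \propositionref{QuotProp} and the remark following it (when $F=0$ one may take $Z=\emptyset$), every object of $\sM_{1,\bbQ}^{\uni}$ is a quotient of some $\oLM(\bbP^1,nP,\emptyset)$. Third, disjoint unions of such curves realize finite direct sums, the functoriality of the spaces $U(X,Y)$ being recorded in \corollaryref{prop:pull-push-uv}.

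To conclude I would apply \propositionref{AyoubBV}. For this I first identify $\ECMM_1^{\uni}$ with the universal $\bbQ$-linear category attached to the restriction of $\bH^1_\dR$ to the full subquiver $\scr E\subseteq\oMCrv$ of triples $(X,Y,\emptyset)$ with $\bH^1_\dR(X,Y_\red,\emptyset)=0$ (so that $\bH^1_\dR|_{\scr E^\op}$ takes values in unipotent motives), arguing by the d\'evissage of \propositionref{univinf=0} and using the exact functors $\fil^2$ and $\Gr^1$ to produce a quasi-inverse. Taking $\sB=\sM_{1,\bbQ}^{\uni}$, $S=\oLM|_{\scr E^\op}$ and $G=\drR_\dR$, the universal functor is $\bLM$, and conditions \enumref{CondA} and \enumref{CondB} of \propositionref{AyoubBV} follow from the disjoint-union and quotient statements of the previous paragraph. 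The main obstacle is condition \enumref{CondC}: the kernel of each $\oLM(p)\to B$ in $\sM_{1,\bbQ}^{\uni}$ must be a sub-$\End(\bH^1_\dR|_{\scr E'})$-module of $U(\bbP^1,n_pP)$ for some finite $\scr E'\ni p$. Concretely this says that every $k$-subspace cut out by a morphism to a vector group is stable under the Nori endomorphism ring, i.e. that the $\bbQ$-linear morphisms of $\ECMM_1^{\uni}$ produce nothing beyond the $k$-linear maps seen by $F^a_\dR$; verifying it rests on the scalar computation of the previous paragraph and on the functoriality of \corollaryref{prop:pull-push-uv}, and it is the step I expect to demand the most care.
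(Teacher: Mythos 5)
Your overall reduction (via $R_\dR:\sM_{1,\bbQ}^{\uni}\xra{\sim}\mod(k)$) and your choice of generators $(\bbP^1,nP,\emptyset)$ match the paper, but there is a genuine gap at exactly the point you flag as delicate, and the one computation you do offer is flawed. You claim that $\End_{\ECMM_{1}^{\uni}}(\mathbf{1}(2))\ra k$ is ``surjective, hence bijective by faithfulness''. Faithfulness of $F^a_\dR$ only embeds this endomorphism ring into $\End_{\bbQ}(U(\bbP^1,2P))\cong\End_{\bbQ}(k)$, a $\bbQ$-algebra of dimension $[k:\bbQ]^2$; it does not embed it into $k$. The entire content of the paper's proof is the \emph{upper} bound: one must show that the Nori endomorphism algebra $\End_{\bbQ}(T|_{\scr E})$ is no larger than $k$. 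For $\mathbf{1}(2)$ this can be rescued by a centralizer argument (the commutant of $m(k)$ in $\End_{\bbQ}(k)$ is $m(k)$ itself), but for $\mathbf{1}(n)$ with $n\ge 3$ the scaling automorphisms alone leave a copy of $k^{n-1}$ acting diagonally on the graded pieces $s^\mu$, and the paper needs the translation $t\mapsto t-1$, the transition edges $\mathrm{P}_n\ra\mathrm{P}_m$, and \lemmaref{lem:elementary} to cut this down to $k$. None of this appears in your proposal, and it is precisely what condition \enumref{CondC} of \propositionref{AyoubBV} demands. Worse, your quiver contains \emph{all} triples $(X,Y,\emptyset)$ with vanishing reduced part, and condition \enumref{CondC} must be checked at every such vertex; a general curve $X$ has no automorphisms, so you would have to connect $(X,Y,\emptyset)$ to the $\mathrm{P}_n$ by enough morphisms of $\oMCrv$ to force the endomorphisms to act by $k$-scalars on $U(X,Y)$ --- an argument you do not supply and which the paper deliberately avoids.

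A second gap is the identification of $\ECMM_{1}^{\uni}$ with the universal category of your subquiver $\scr E$. The analogous statement for $\ECMM_{1}^{\inf=0}$ (\propositionref{univinf=0}) works because $(X,Y,Z)\mapsto(X,Y,Z_\red)$ is an honest morphism of quivers inducing the retraction $\fil^1_{\scr C}$; there is no quiver morphism $\oMCrv\ra\scr E$ computing the unipotent part ($\fil^2$ is defined as a kernel in \eqref{DefFil2}, not via a quiver endomorphism), so the d\'evissage you invoke does not go through verbatim. The paper sidesteps both problems by a different route: it forms the universal category $\scr U$ of the \emph{small} quiver $\oMPo$ consisting of the $\mathrm{P}_n$ alone, proves $\End_{\bbQ}(T|_{\scr E})=k$ for every finite subquiver by the explicit matrix computation sketched above, concludes $\scr U\simeq\mod(k)$, and then deduces the proposition from the faithfulness of $\bLM$ together with the fact that the composite $\scr U\ra\ECMM_{1}^{\uni}\ra\sM^{\uni}_{1,\bbQ}\ra\mod(k)$ is an equivalence; no appeal to \propositionref{AyoubBV} is made at this stage.
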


For the proof, we need an elementary lemma.
\begin{lemm}\label{lem:elementary}
For any $\mu \in \bbZ_{>0}$,
$k$ is generated by $\{ a^\mu \mid a \in k \}$
as a $\bbQ$-algebra.
\end{lemm}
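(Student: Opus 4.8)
The plan is to prove the stronger statement that already the $\bbQ$-linear span $V \subseteq k$ of the set $\{a^\mu \mid a \in k\}$ is all of $k$; since $V$ is contained in the $\bbQ$-subalgebra generated by the $\mu$-th powers, this is enough. I would fix an arbitrary element $a \in k$ and aim to show $a \in V$. The key observation is that for every integer $n$ the translate $a + n$ again lies in $k$, so that $(a+n)^\mu \in V$ by the very definition of $V$.

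Next I would expand these elements by the binomial theorem,
\[
(a+n)^\mu = \sum_{j=0}^{\mu} \binom{\mu}{j}\, n^{\mu-j}\, a^j,
\]
and read the $\mu+1$ identities obtained for $n = 0, 1, \dots, \mu$ as a single linear system over $\bbQ$: the vector $\bigl((a+n)^\mu\bigr)_{n=0}^{\mu} \in V^{\mu+1}$ is the image of the vector of unknowns $b_j := \binom{\mu}{j} a^j \in k$ under the matrix $\bigl(n^{\mu-j}\bigr)_{0 \le n, j \le \mu}$. Reordering its columns turns this matrix into the Vandermonde matrix on the pairwise distinct nodes $0, 1, \dots, \mu$ (distinct because $k$ has characteristic zero), whose determinant $\prod_{0 \le i < j \le \mu}(j-i)$ is a nonzero rational number; hence the matrix is invertible over $\bbQ$.

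Inverting it expresses each $b_j$ as a $\bbQ$-linear combination of the elements $(a+n)^\mu \in V$, so every $b_j$ lies in $V$. In particular $b_1 = \mu a \in V$, and since $\mu$ is a nonzero rational we conclude $a \in V$. As $a \in k$ was arbitrary, $V = k$, which proves the lemma. There is essentially no serious obstacle here—the argument is purely elementary, as the label of the lemma suggests—the only point requiring care is the bookkeeping that identifies the coefficient matrix with a Vandermonde matrix, so that its invertibility over $\bbQ$ can be invoked to solve for $b_1 = \mu a$.
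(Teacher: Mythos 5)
Your proof is correct and follows essentially the same route as the paper's: the paper writes $k=\bbQ(\gamma)$ and observes that $\gamma$ is a $\bbQ$-linear combination of $\gamma^\mu, (\gamma+1)^\mu, \dots, (\gamma+\mu-1)^\mu$, which is exactly your binomial/Vandermonde computation applied to a primitive element. Your version is marginally stronger (it shows the $\bbQ$-linear span of the $\mu$-th powers is already all of $k$, for every $a\in k$, without invoking the primitive element theorem), but the underlying idea --- integer translates plus invertibility of the resulting rational coefficient matrix --- is the same.
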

\begin{proof}
Write $k=\bbQ(\gamma)$ with $\gamma \in k$.
Then $\gamma$ can be written as a $\bbQ$-linear combination of
$\gamma^\mu, (\gamma+1)^\mu, \dots, (\gamma+\mu-1)^\mu$.
The lemma follows from this.
\end{proof}

\begin{proof}
We define a subquiver $\oMPo$ of $\oMCrv$ as follows.
The vertices are given by 
$\mathrm{P}_n:=(\bbP^1, n[\infty], \emptyset) \in \oMCrv$
for any integer $n \geq 2$.
The edges from $P_n$ to $P_m$ consists of two types:
\begin{align}
\label{auto2}
&\text{Any automorphism of $\bbP^1$ which fixes $\infty$, when $n=m \geq 2$.}
\\
\label{auto1}
&\text{The identity map on $\bbP^1$, when $m \geq n \geq 2.$}
\end{align}
Let $w: \mod(k) \to \mod(\bbQ)$ be the forgetful functor.
Consider the representation $T=w \circ R_{\dR}\circ\LM|_{\oMPo}:\oMPo^\op\ra\mod(\bbQ)$ and its canonical factorization
$$\oMPo^\op\xra{\overline{T}}\scr U\xra{F_T}\mod(\bbQ)$$
where $\scr U=\comod(\eusm{C}_T)$ is Nori's universal category (see \remarkref{RemaNoriCat}). Note that the restriction of the representation $\overline{\bH}^1_\dR$ to the subquiver $\oMPo$ takes its values in $\ECMM_{1}^\uni$. Hence, by the universal property of Nori's construction (see \theoremref{NoriLength}), there exist a $\bbQ$-linear exact faithful functor $U:\scr U\ra\ECMM_{1}^{\uni} $, two invertible natural transformations $\alpha:U\circ \overline{T}\ra\overline{\bH}^1_\dR $ and  $\beta:w \circ R_\dR\circ \bLM\circ U\ra F_T$ such that the diagram
$$\xymatrix{{\oMPo}\ar[rd]_-{\overline{T}}\ar[r]^-{\overline{\bH}^1_\dR} & {\ECMM_{1}^\uni}\ar[r]^-{\bLM} & {\sM_{1,\bbQ}^{\uni}}\ar[r]^-{R_\dR} & {\mod(k)} \ar[r]^-{w}& {\mod(\bbQ)}\\
{}& {\scr U}\ar@/_2em/[rrru]_-{F_T}\ar[u]_-{U} & {} & {}}  $$
is commutative. Since the functor $\bLM $ is faithful, to show the proposition it is enough to show that $R_\dR\circ \bLM\circ U : \scr U \to \mod(k)$ is an equivalence of categories (note that the functor $U$ will then also be an equivalence). It suffices to see that $\eusm{C}_T$ is the $\bbQ$-linear dual of the algebra $k$, and this amounts to check that for every full subquiver $\scr E$ of $\oMPo$ with finitely many objects 
$$\End_\bbQ(T|_\scr E)=k.$$ 
We may assume $\scr E$ of the form $\{\mathrm{P}_2,\ldots,\mathrm{P}_n\}$ for some integer $n\geqslant 2$. Write $\bbP^1=\Proj(k[T,S])$
and put $t=T/S$, $s=S/T$ so that $\infty \in \bbP^1$ is defined by $s=0$.
By \eqref{eq:lie-jacuni} and \eqref{eq:def-lm}, the representation $T$ maps $\mathrm{P}_n$ to the $\bbQ$-vector space $sk[s]/(s^n)$. We compute the action of morphisms on this space in three instances:
\begin{enumerate}
\item[(a)] Let $n \geqslant 2$ and consider the edge $e:\mathrm{P}_n\ra\mathrm{P}_n$ of type \eqref{auto2} given by $t\mapsto at$ where $a$ is a fixed element in $k^\times$. 
Then $T(e)$ is the $k$-linear map represented 
by a diagonal matrix $(a^{-1}, a^{-2}, \dots, a^{1-n})$ with respect to the $k$-basis $\{ s, s^2, \dots, s^{n-1} \}$,
\item[(b)] Let $n \geqslant 2$ and consider the edge $e:\mathrm{P}_n\ra\mathrm{P}_n$  of type \eqref{auto2} given by $t \mapsto t-1$.
Then $T(e)$ maps $s=1/t$ to $1/(t-1)=s+s^2+\dots+s^{n-1}$.
(We will not need to know $T(e)(s^i)$ for $i>1$.)
\item[(c)]
Let $m \geqslant n \geqslant 2$ and consider the edge of type \eqref{auto1}.
Then $T(e)$ is the map
$sk[s]/(s^m) \to sk[s]/(s^n)$
induced by the identity on $sk[s]$.
\end{enumerate}

Let $\alpha$ be an element in $\End_\bbQ(T|_\scr E)$. Then $\alpha$ is given by a  family $(\alpha^{(i)})_{i=2}^n\in\prod_{i=2}^n\End_{\bbQ}(T(\mathrm{P}_i))$ such that for every edge $e:\mathrm{P}_i\ra\mathrm{P}_j$ in $\oMPo$ 
\begin{equation}\label{condend}
\alpha_i\circ T(e)=T(e)\circ\alpha_j.
\end{equation}
We write $\alpha^{(i)}=(\alpha^{(i)}_{\mu \nu})_{\mu, \nu=1, \dots, i}$
with $\alpha^{(i)}_{\mu \nu} \in \End_{\bbQ}(k)
(\cong M_{d}(\bbQ)$ with $d=[k:\bbQ]$).
Let us define a $\bbQ$-algebra embedding $m : k \to \End_{\bbQ}(k)$
by $m(a)(x)=ax ~(a, x \in k)$.

The condition \eqref{condend} for all edges of the form (a) implies that 
\begin{equation}\label{condend2}
 m(a)^{-\mu} \alpha^{(i)}_{\mu \nu} =\alpha^{(i)}_{\mu \nu} m(a)^{-\nu} 
\quad \text{for all $a \in k^\times$.}
\end{equation}
Since $m(a)$ for $a \in \bbQ$ lies in the center of $\End_{\bbQ}(k)$,
\eqref{condend2} applied to, say, $a=2$
yields $\alpha^{(i)}_{\mu \nu}=0$ if $\mu \not= \nu$.
In view of \lemmaref{lem:elementary}, 
it also yields that $\alpha^{(i)}_{\mu \mu}$
belongs to the centralizer of the image of $m$,
which is $k$ itself as $k$ is a maximal commutative subring of $\End_{\bbQ}(k)$.
We write $\alpha^{(i)}_{\mu \mu} = m(a_\mu^{(i)})$ with $a_\mu^{(i)} \in k$.
Applying the condition \eqref{condend} for all edges of the form (b),
we obtain $a_1^{(i)}=a_\mu^{(i)}$ for all $\mu$.
Finally, \eqref{condend} for all edges of type (c) 
yields $a_1^{(i)}=a_1^{(1)}$ for all $i$.
We have shown that $\alpha = a_1^{(1)} \in k$.
This completes the proof.
\end{proof}

Let $\ECMM_{1}^{\inf}$ be the kernel of the exact functor $\fil^1$ constructed in \pararef{Fil1}. 
By a dual argument, we obtain the following proposition.

\begin{prop}\label{Equivinf}
The restriction of the functor
$$\bLM:\ECMM_{1}\ra\sM^a_{1,\bbQ}$$
to the subcategory $\ECMM_{1}^{\inf} $ induces an equivalence of categories between $\ECMM_{1}^{\inf} $ and $\sM_{1, \bbQ}^{\inf}$
(see \pararef{sect:deligne-1-motives}).
\end{prop}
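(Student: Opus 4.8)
The plan is to run the argument proving \propositionref{Equivuni} \emph{mutatis mutandis}, interchanging the two moduli $Y$ and $Z$ (equivalently, the unipotent and infinitesimal parts, which Cartier duality exchanges). First I would record that $\bLM$ sends $\ECMM_1^{\inf}=\ker(\fil^1)$ into $\sM_{1,\bbQ}^{\inf}=\ker(\fil^1_\sM)$: this follows from the compatibility $\rho:\fil^1_\sM\circ\bLM\to\bLM\circ\fil^1$ of \pararef{Fil1} and the fact that an exact faithful functor detects zero objects, so that $\bLM$ restricts to $\bLM:\ECMM_1^{\inf}\to\sM_{1,\bbQ}^{\inf}$. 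I also note, dually to the unipotent case, that by \eqref{eq:can-spl4} the restriction of $R_\dR$ to $\sM_{1,\bbQ}^{\inf}$ is the functor $F_{\inf}[1]\mapsto\Lie(F_{\inf})$, which is an equivalence $\sM_{1,\bbQ}^{\inf}\xra{\sim}\mod(k)$ (being essentially the equivalence $\Lie:\sS_{\inf}\to\mod(k)$).

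Next I would introduce the subquiver $\uMPo$ of $\oMCrv$ dual to $\oMPo$: its vertices are $\mathrm{P}_n:=(\bbP^1,\emptyset,n[\infty])$ for $n\geq 2$, and its edges from $\mathrm{P}_n$ to $\mathrm{P}_m$ are the automorphisms of $\bbP^1$ fixing $\infty$ when $n=m$, together with the identity map when $n\geq m\geq 2$. Here the inequality is reversed relative to \eqref{auto1}, because in $\oMCrv$ the morphism conditions (2)--(3) bear on the third entry $Z$. Since $J(\bbP^1,\emptyset)=0$ and $F(\bbP^1,n[\infty])_{\et}=0$, one has $\oLM(\mathrm{P}_n)=[V(\bbP^1,n[\infty])\otimes_k\hat{\bbG}_a\to 0]\in\sM_{1,\bbQ}^{\inf}$, so $\overline{\bH}^1_\dR|_{\uMPo}$ takes values in $\ECMM_1^{\inf}$. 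As in \propositionref{Equivuni} I would form $T:=w\circ R_\dR\circ\LM|_{\uMPo}$, its Nori factorization $\uMPo^{\op}\xra{\overline{T}}\scr U=\comod(\eusm{C}_T)\xra{F_T}\mod(\bbQ)$, and the functor $U:\scr U\to\ECMM_1^{\inf}$ supplied by universality, fitting into the analogous commutative diagram. Because $\bLM$ is faithful and $R_\dR$ restricts to an equivalence on $\sM_{1,\bbQ}^{\inf}$, it then suffices, exactly as before, to prove that $R_\dR\circ\bLM\circ U$ is an equivalence, which reduces to the identity $\eusm{C}_T=k^*$, i.e.\ to $\End_\bbQ(T|_\scr E)=k$ for every finite full subquiver $\scr E=\{\mathrm{P}_2,\dots,\mathrm{P}_n\}$.

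The core of the proof is this endomorphism computation, where the duality becomes explicit. Here $T(\mathrm{P}_n)=V(\bbP^1,n[\infty])=\mathfrak{m}_\infty^{1-n}/\Osheaf_{\bbP^1,\infty}$, with $k$-basis $t,t^2,\dots,t^{n-1}$ (where $t$ is the affine coordinate and $s=1/t$). Using the functoriality of $V$ from \corollaryref{prop:pull-push-uv}, I would compute: for $t\mapsto at$ with $a\in k^\times$ the map $T(e)$ is the diagonal map $\mathrm{diag}(a,a^2,\dots,a^{n-1})$; for $t\mapsto t+1$ it is the unipotent map $t^j\mapsto\sum_{i=1}^{j}\binom{j}{i}t^i$; and for the identity edge $\mathrm{P}_n\to\mathrm{P}_m$ with $n\geq m$ it is the inclusion $V(\bbP^1,m[\infty])\hookrightarrow V(\bbP^1,n[\infty])$. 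These are the dual counterparts of the maps (a)--(c) in \propositionref{Equivuni} (positive powers of $a$, the opposite triangularity, and inclusions in place of reductions). The same linear algebra then runs verbatim: writing an endomorphism as $\alpha=(\alpha^{(i)})_i$ with $\alpha^{(i)}=(\alpha^{(i)}_{\mu\nu})$ and entries in $\End_\bbQ(k)$, the relations from $t\mapsto at$ force $\alpha^{(i)}_{\mu\nu}=0$ for $\mu\neq\nu$ (the weights $a^\mu$ being distinct) and, through \lemmaref{lem:elementary}, force each diagonal entry into the centralizer $k$ of $m(k)$; the relations from $t\mapsto t+1$ force the diagonal entries inside a given $\mathrm{P}_i$ to coincide (all binomial coefficients being nonzero); and the identity edges force the resulting scalars to agree across all $\mathrm{P}_i$. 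Hence $\End_\bbQ(T|_\scr E)=k$, and $U$ (and with it $\bLM|_{\ECMM_1^{\inf}}$) is an equivalence.

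I expect the one genuinely new point, and hence the main obstacle, to be pinning down the edge actions on $V(\bbP^1,n[\infty])$ correctly --- in particular the reversed direction of the identity edges and the pullback conventions that make the weights the strictly positive integers $1,\dots,n-1$, which is exactly what lets the Vandermonde/centralizer argument go through unchanged. Once these are verified, the formal deduction of the equivalence is identical to the unipotent case. Alternatively one could transport \propositionref{Equivuni} through Cartier duality, using the identification of $\LM(X,Z,Y)$ with the Cartier dual of $\LM(X,Y,Z)$, but the direct dual argument above is cleaner to set up here since $\ECMM_1$ is built from $\oMCrv$.
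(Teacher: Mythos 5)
Your proposal is correct and is essentially the paper's own proof: the paper disposes of this proposition with the single sentence ``By a dual argument, we obtain the following proposition,'' and what you have written is precisely that dual argument carried out in full, with the right dual quiver $(\bbP^1,\emptyset,n[\infty])$, the reversed inequality on the identity edges, and the correct weights $a,a^2,\dots,a^{n-1}$ on $V(\bbP^1,n[\infty])$ making the centralizer computation go through. The only difference is one of explicitness, not of method.
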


\subsection{} We finally prove our main theorem.

\begin{proof}[Proof of \theoremref{MainTheo2}]
To prove \theoremref{MainTheo2} it is enough to show that we are in a situation where the criterion of \propositionref{AyoubBV} applies. The first condition is obviously satisfied and the second one follows from \propositionref{QuotProp}. It remains to prove that the third condition is also satisfied.

Let $A$ be an object in $\ECMM_{1}$, $M$ be an object in $\sM^a_{1,\bbQ}$ and $u:\bLM(A)\ra M$ be a morphism in $\sM^a_{1,\bbQ}$. By applying the functor $R_\dR$ we get a morphism
$$R_\dR(u):F^a_{\dR}(A)\ra R_{\dR}(M) $$
of $\bbQ$-vector spaces. Note that $u$ induces a commutative diagram in  $\sM^a_{1,\bbQ}$

$$\xy
\xymatrix"*"{{\bLM(\fil^2(A))}\ar[r] &{\bLM(\fil^1(A))}\ar[r]\ar[d] & {\bLM(A)}\ar[d]\\
 & {\bLM(\Gr^1(A))} & {\bLM(\Gr^0(A))}}
 \POS(25,-25) 
 \xymatrix@C=1.5cm{{\fil^2_\sM(M)}\ar[r]\ar@{<-}["*"]  &{\fil^1_\sM(M)}\ar[r]\ar[d]\ar@{<-}["*"]  & {M}\ar[d]\ar@{<-}["*"] \\
 & {\Gr^1_\sM(M)}\ar@{<-}["*"]|(.58)\hole   & {\Gr^0_\sM(M)}\ar@{<-}["*"]|(.58)\hole }
 \endxy$$
Applying the functor $R_\dR$ yields a commutative diagram 
\begin{equation}\label{eq:chase}
\xy
\xymatrix"*"{{F^a_\dR(\fil^2(A))}\ar[r] &{F^a_\dR(\fil^1(A))}\ar[r]\ar[d] & {F^a_\dR(A)}\ar[d]\\
 & {F^a_\dR(\Gr^1(A))} & {F^a_\dR(\Gr^0(A))}}
 \POS(25,-25) 
 \xymatrix{{R_\dR(\fil^2_\sM(M))}\ar[r]\ar@{<-}["*"]^-{v_2}  &{R_\dR(\fil^1_\sM(M))}\ar[r]\ar[d]\ar@{<-}["*"]  & {R_\dR(M)}\ar[d]\ar@{<-}["*"]_-{v} \\
 & {R_\dR(\Gr^1_\sM(M))}\ar@{<-}["*"]|(.58)\hole^(.4){v_1}  & {R_\dR(\Gr^0_\sM(M))}\ar@{<-}["*"]|(.58)\hole_(.7){v_0} }
 \endxy
 \end{equation}
 where we set $v=R_\dR(u)$, $v_0:=R_\dR(\Gr^0_\sM(u))$, $v_1:=R_\dR(\Gr^1_\sM(u)) $ and $v_2:=R_\dR(\fil^2_\sM(u))$  to simplify notations.

By construction of the category $\ECMM_{1}$ (see \remarkref{RemaNoriCat}), there exists a finite subquiver $\scr E$ of $\oMCrv$ such that in the diagram
$$\xymatrix{{F^a_\dR(\fil^2(A))}\ar[r] &{F^a_\dR(\fil^1(A))}\ar[r]\ar[d] & {F^a_\dR(A)}\ar[d]\\
 & {F^a_\dR(\Gr^1(A))} & {F^a_\dR(\Gr^0(A))}}$$
all objects are canonically endowed with a $\End(H^1_\dR|_\scr E)$-module structure and all morphisms are $\End(H^1_\dR|_\scr E)$-linear. Using \propositionref{Equivuni}, \propositionref{Equivinf} and \propositionref{EquivDel2}, by allowing $\scr E$ to be bigger, we may assume that the kernels of the maps $v_0,v_1,v_2$ are sub-$\End(T|_\scr E)$-modules. An easy diagram chase in \eqref{eq:chase} shows that the kernel of $v$ is a sub-$\End(T|_\scr E)$-module of $F^a_\dR$ as well. This concludes the proof.
\end{proof}

\end{document}